\documentclass[12pt]{amsart}
\usepackage[a4paper,margin=2.2cm]{geometry}
\linespread{1.1}
\usepackage{microtype}
\usepackage[cal=euler]{mathalfa}
\usepackage{tikz,tikz-cd}
\usetikzlibrary{decorations}
\usepackage[colorlinks,allcolors=blue]{hyperref}
\usepackage{graphicx}
\usepackage{xcolor}

\input{commands}
\defcite\and{andre_slope}
\defcite\dur{durov_new}
\defcite\kas{kashiwara_crystal}

\def\pa{proto-abelian\xspace}
\def\pe{proto-exact\xspace}
\def\qpa{parabelian\xspace}
\def\EM{Eilenberg-Moore category\xspace}

\def\NS{\opn{NS}} 
\def\ES{\opn{ES}} 
\def\HS{\opn{HS}} 
\oper{CMon} 
\def\vect{\Vect^\fd}
\def\NN{\aleph_0}
\oper{Cl} 
\oper{Mat} 
\oper{Aff}
\oper{Norm}
\oper{Bun}
\oper{proj}
\oper{PCM} 
\opr\PC{\mathcal C} 
\operl{Ker,Coker,Im,Coim,coim,dom}

\def\aE{\mathfrak E} 
\def\aM{\mathfrak M} 
\def\Ex{\mathcal E} 
\def\AA{\bar A} 
\def\SS{\bar S} 
\def\ZZ{\bZ_\infty} 

\opr\hom{\mathbf{Hom}} 
\def\pto{\rightharpoonup} 
\def\darr{\rightrightarrows} 
\def\simm{\approx} 
\def\bm{\mathbf m}

\def\bplus{\mathrel{\btikz[scale=.24,line width=1.3,line cap=round]{%
  \draw(0,0)--(1,0)(.5,-.5)--(.5,.5);}}} 

\def\fd{\mathsf{fd}}
\def\s{\mathsf s} 


%
%


\operl{Kl,Ban,Lat,Hilb}

\begin{document}
\title{Proto-exact and parabelian categories}
\author{Sergey Mozgovoy}

\begin{abstract}
Proto-exact and parabelian categories serve as non-additive analogues of exact and quasi-abelian categories, respectively.
They give rise to algebraic $K$-theory and Hall algebras
similarly to the additive setting.
We show that every parabelian category admits a canonical proto-exact structure and we study several classes of parabelian categories,
including categories of normed and Euclidean vector spaces, pointed closure spaces and pointed matroids, Hermitian vector bundles over rings of integers.
We also examine finitary algebraic categories arising in Arakelov geometry and provide a criterion for determining when such a category is parabelian.
In particular, we prove that the categories of pointed convex spaces and absolutely convex spaces are parabelian.
\end{abstract}

\maketitle

\section*{Introduction}
Proto-exact categories \cite{dyckerhoff_highera} are a non-additive analogue of Quillen exact categories
\cite{quillen_higher,keller_chain}.
Essentially, a proto-exact category is a pointed category equipped with a class of short exact sequences (called admissible)
satisfying certain properties.
As in the additive setting, one can associate with a proto-exact category the Waldhausen $S$-construction,
which forms a 2-Segal simplicial groupoid.
This object serves as a foundation for constructing algebraic
$K$-theory as well as various types of Hall algebras associated with the category \cite{dyckerhoff_highera}.

Examples of (non-additive) proto-exact categories include
categories of quiver representations and semigroup representations in $\Set_*$
\cite{szczesny_representations,szczesny_hall},
category of Euclidean vector spaces
with morphisms of norm $\le1$
and various categories arising in Arakelov geometry
\cite{andre_slope,dyckerhoff_highera},
category of pointed matroids \cite{eppolito_proto},
categories of modules over semirings \cite{jun_proto}.
One can verify that all of the above proto-exact structures are canonical and consist of all short exact sequences.
This leads us to the notion of a parabelian
category, which generalizes both quasi-abelian \cite{schneiders_quasi} and proto-abelian \cite{andre_slope,dyckerhoff_higher} categories.

Recall that an additive category with kernels and cokernels
is called quasi-abelian \cite{schneiders_quasi}
if the class of its strict epimorphisms (the cokernels) is closed under pullbacks
and the class of its strict monomorphisms (the kernels) is closed under pushouts.
Examples of (non-abelian) quasi-abelian categories include categories of filtered objects in an abelian category,
locally free sheaves over a smooth curve,
finitely generated free abelian groups (or finitely generated projective modules over a Dedekind domain).
A quasi-abelian category, when equipped with the class of all short exact sequences, forms an exact category, as shown in \cite{schneiders_quasi}.
In the non-additive setting, we define \qpa categories in a similar manner and prove that a \qpa category,
when equipped with the class of all short exact sequences, is \pe.
Note that the various definitions of \pa\ categories \cite{andre_slope,dyckerhoff_higher} are strictly stronger than that of a \qpa\ category.
For instance, the \pa\ categories in \cite{andre_slope} do not include pointed sets, while those in \cite{dyckerhoff_higher} exclude Euclidean vector spaces.

In this paper, we introduce a set of axioms for proto-exact categories that mirror those of exact categories \cite{quillen_higher, keller_chain}.
We further prove that this formulation is equivalent to the existing definition of a proto-exact category given in \cite{dyckerhoff_highera}.
Then, to establish that every \qpa category is \pe, it suffices to show that the classes of strict monomorphisms and epimorphisms are closed under composition.
This property is well known in the additive setting \cite{schneiders_quasi}.

We will verify that various classes of categories are \qpa.
For example, we will show that the categories of normed vector spaces
\S\ref{sec:normed}
and pointed closure spaces \S\ref{sec:closure} are \qpa.
The categories of Euclidean vector spaces and pointed matroids appear as their full subcategories closed under taking kernels and cokernels, hence are also \qpa.

We will also study a broad class of finitary algebraic categories,
defined as categories of algebras over finitary monads \S\ref{fini}.
Such categories naturally appear in universal algebra \cite{hyland_category}
and in the formal approach to Arakelov geometry \cite{durov_new}.
Although not all finitary algebraic categories are \qpa
(for instance, the category of groups fails to be \qpa), we will establish a criterion to determine when this is the case.

Given a normed semiring $K$, we will define the valuation monad $\bar\cO_K$ associated with the (weak) partial semiring $\cO_K=\sets{x\in K}{\n x\le1}$
and define the category of $\cO_K$-modules $\Mod(\cO_K)$ to be the category of algebras over the monad $\bar\cO_K$ \S\ref{val-mon}.
Such categories include categories of
\begin{enumerate}
\item
pointed sets (modules over $\bF_1=\set{0,1}\sbs\bN$),
\item monoid representations in pointed sets,
\item commutative monoids (modules over $\bN$),
\item modules over semirings,
\item convex spaces (modules over $[0,1]\sbs \bR_+$),
\item absolutely convex spaces (modules over $\bZ_{\infty}=[-1,1]\sbs\bR$).
\end{enumerate}
We will see that all of these categories are \qpa.
This naturally raises the question of characterizing those normed semirings $K$ for which the categories $\Mod(\cO_K)$ are \qpa.

\subsection*{Conventions}
We define $\bN=\bZ_{\ge0}=\sets{n\in\bZ}{n\ge0}$ and $\bR_+=\bR_{\ge0}=\sets{x\in\bR}{x\ge0}$.
For a subset $X\sbs Y$ and $y\in Y$, we denote $X\cup\set y$ by $X\cup y$ and $X\ms\set y$ by $X\ms y$.

\section{Proto-exact and \qpa categories}
In this section we will define proto-exact and \qpa categories
which can be thought of as non-additive analogues of exact and quasi-abelian categories, respectively.
As in the additive case, we will see that \qpa categories have a canonical proto-exact structure,
consisting of all short exact sequences.

\subsection{Pointed categories}
Given a cartesian square (or a pullback square)
\begin{equation}\label{sq1}
\begin{tikzcd}
X'\rar["f'"]\dar["g'"']&Y'\dar["g"]\\
X\rar["f"]&Y
\end{tikzcd}
\end{equation}
we say that $f'$ is a pullback of $f$ along $g$.
Similarly, if \eqref{sq1} is a cocartesian square (or a pushout square), we say that $f$ is a pushout of $f'$ along $g'$.
We say that \eqref{sq1} is bicartesian if it is both cartesian and cocartesian.
We say that a class of morphisms $\cA$ is closed under pullbacks along a class of morphisms $\cB$ if for all $f:X\to Y$ in $\cA$ and all $g:Y'\to Y$ in $\cB$ there exists a cartesian square \eqref{sq1} with $f'\in\cA$.
Similarly for pushouts of $\cA$ along $\cB$.

A category $\cC$ is called \idef{pointed} if it has a zero object $0$,
meaning an object that is both initial and terminal.
For $X,Y\in\cC$, we denote the composition
$X\to 0\to Y$ by~$0=0_{XY}$.
The kernel $\ker(f):\Ker(f)\to X$ of $f:X\to Y$ is the equalizer of $f,0:X\darr Y$
or the pullback of $0\to Y$ along $f$.
Similarly, the cokernel $\coker(f):Y\to\Coker(f)$ of $f:X\to Y$ is the coequalizer
of $f,0:X\darr Y$ or the pushout of $X\to0$ along $f$.
A pair $(f,g)$
of composable morphisms $X\xto fY\xto gZ$
is called \idef{exact} (or a short exact sequence) if $f=\ker g$ and $g=\coker f$.
In this case we say that $f$ is a \idef{strict mono(morphism)} and $g$ is a \idef{strict epi(morphism)}.

\begin{lemma}\label{lm cart}
Consider a commutative diagram
\begin{equation}
\begin{tikzcd}\label{diag1}
X'\rar["f'"]\dar["g'"'] &Y'\dar["g"]\\
X\rar["f"] &Y\rar["h"]&Z
\end{tikzcd}
\end{equation}
\begin{enumerate}
\item If $f=\ker(h)$ and the square is cartesian,
then $f'=\ker(hg)$.
\item If $f$ is mono and $f'=\ker(hg)$,
then the square is cartesian.
\end{enumerate}

\end{lemma}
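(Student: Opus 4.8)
The plan is to read off both statements from the relevant universal properties, using two elementary facts repeatedly: a kernel is a monomorphism, and a pullback of a monomorphism is again a monomorphism (both immediate from the defining squares). Write the commutativity of the square in \eqref{diag1} as $gf'=fg'$. I also record that $hf=0$: in (1) this holds because $f=\ker h$, and for (2) I take it as part of the hypotheses (the bottom row of \eqref{diag1} being a complex); see the final paragraph for why it is genuinely needed.

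\emph{Part (1).} Since $f=\ker h$, the morphism $f\colon X\to Y$ is the pullback of $0\to Z$ along $h$, and by hypothesis $f'\colon X'\to Y'$ is the pullback of $f$ along $g$ (with $g'$ the pullback of $g$). Transposing the latter square and stacking it on top of the former, the pasting law for pullbacks realises $f'$ as the pullback of $0\to Z$ along $hg$, i.e. $f'=\ker(hg)$. I would also verify this by hand, to keep the argument self-contained: first $hgf'=hfg'=0$; second, given $u\colon W\to Y'$ with $hgu=0$, the composite $gu$ is killed by $h$, hence factors uniquely as $gu=fp$ through $f=\ker h$; the pair $(p,u)$ then satisfies $fp=gu$, so it factors uniquely through the cartesian left square, producing $v\colon W\to X'$ with $g'v=p$ and $f'v=u$; and $v$ is the unique morphism with $f'v=u$ because $f'$, being a pullback of the mono $f$, is mono. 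This is precisely the kernel universal property of $\ker(hg)$.

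\emph{Part (2).} Now $f'=\ker(hg)$, so $f'$ is mono and enjoys the kernel universal property; I check the universal property of the pullback of $f$ along $g$ for the left square of \eqref{diag1}. Let $W$ carry morphisms $p\colon W\to X$ and $q\colon W\to Y'$ with $fp=gq$. Then $hgq=hfp=0$ (here $hf=0$ is used), so $q$ factors uniquely through $f'=\ker(hg)$: there is a unique $v\colon W\to X'$ with $f'v=q$. The remaining equation $g'v=p$ is then forced, since $f(g'v)=g(f'v)=gq=fp$ and $f$ is mono; and uniqueness of $v$ subject to $g'v=p$ and $f'v=q$ follows again from $f'$ being mono. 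Hence the square is cartesian.

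The argument is essentially bookkeeping with universal properties, so I do not foresee a genuine obstacle. The one point demanding care is that Part (2) really does rely on the bottom row of \eqref{diag1} being a complex: if, say, $h$ and $g$ are monic (so $\ker(hg)=0$ and thus $X'=0$), the pullback $X\times_Y Y'$ can still be nonzero, so the square need not be cartesian without $hf=0$. I would therefore make sure this hypothesis is in force --- explicitly or through the conventions governing \eqref{diag1} --- before running the argument; and, if one prefers the pasting-law proof of (1), take care to transpose the left square of \eqref{diag1} so that its $f$-edge aligns as a vertical edge of the kernel square.
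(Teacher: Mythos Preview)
Your proof is correct and follows the same approach as the paper: both verify the universal properties directly, using that $f'$ (as a kernel, or as a pullback of the mono $f$) is mono for uniqueness, and that $f$ is mono to deduce the remaining equation $g'v=p$ in Part~(2). Your remark about $hf=0$ is apt: the paper's proof silently uses it when writing ``$hga=hfb=0$'', and while it holds in every application of the lemma in the paper (where either $f=\ker h$ or $h=\coker f$), it is not stated explicitly --- your counterexample shows it is genuinely needed for Part~(2).
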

\begin{proof}
\clm1
Given $a:U\to Y'$ satisfying $hga=0$,
there exists $b:U\to X$ with $fb=ga$.
There exists $c:U\to X'$ with $g'c=b$ and $f'c=a$.
If $f'c=f'd$, then $fg'c=fg'd$, hence $g'c=g'd$. Therefore $c=d$.

\clm2
Given $a:U\to Y'$ and $b:U\to X$ satisfying $ga=fb$,
we have $hga=hfb=0$, hence there exists $c:U\to X'$ with $f'c=a$.
We have $fg'c=gf'c=ga=fb$, hence $g'c=b$.
The map $c$ is unique since $f'$ is mono.
\end{proof}

\begin{lemma}[\cf \and]
\label{lm bicart}
A cocartesian square \eqref{sq1} with strict mono $f'$ and mono $f$ is bicartesian.
A cartesian square \eqref{sq1} with strict epi $f$ and epi $f'$ is bicartesian.
\end{lemma}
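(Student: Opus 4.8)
The two statements are dual to one another: passing to $\cC^\op$ (and relabelling the corners of the square) turns a cocartesian square \eqref{sq1} with strict mono $f'$ and mono $f$ into a cartesian square \eqref{sq1} with strict epi $f$ and epi $f'$, while being bicartesian is a self-dual property. So it suffices to prove the first statement, and the plan is to reduce it to Lemma~\ref{lm cart}(2).

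Assume \eqref{sq1} is cocartesian with $f'$ a strict mono and $f$ a mono. Set $q=\coker(f')\colon Y'\to Z$; since $f'$ is a strict mono, $f'=\ker(q)$. Because $qf'=0$ and the other leg $0_{XZ}\colon X\to Z$ also satisfies $0_{XZ}\circ g'=0$, the universal property of the pushout \eqref{sq1} yields a unique morphism $h\colon Y\to Z$ with $hg=q$ and $hf=0$. Now $\ker(hg)=\ker(q)=f'$ and $f$ is mono, so Lemma~\ref{lm cart}(2), applied with this $h$ appended to \eqref{sq1}, shows that \eqref{sq1} is cartesian; being both cartesian and cocartesian, it is bicartesian.

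One can also finish by a bare-hands chase instead of citing Lemma~\ref{lm cart}: for test data $a\colon U\to Y'$ and $b\colon U\to X$ with $ga=fb$, a morphism $c\colon U\to X'$ with $f'c=a$ and $g'c=b$ is unique because $f'$ is mono, and it exists because $qa=hga=hfb=0$ lets $a$ factor as $f'c$ through $\ker(q)=f'$, after which $fg'c=gf'c=ga=fb$ together with the fact that $f$ is mono force $g'c=b$.

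I do not expect a genuine obstacle here; the argument is a short diagram chase. The only step that takes a moment of thought is constructing the auxiliary morphism $h$ that makes Lemma~\ref{lm cart}(2) applicable: it is not available a priori and must be produced from $\coker(f')$ together with the pushout property of \eqref{sq1}. Once $h$ is in hand the rest is routine, and the second (cartesian) statement then follows by duality.
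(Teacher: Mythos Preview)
Your proof is correct and follows essentially the same route as the paper: construct $h$ from $\coker(f')$ via the pushout property, observe $f'=\ker(hg)$, and apply Lemma~\ref{lm cart}(2). Your extra remarks on duality and the alternative direct chase are fine elaborations, but the core argument is identical.
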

\begin{proof}
Let \eqref{sq1} be cocartesian, $f'$ be strict mono and $f$ be mono.
Let $h'=\coker(f'):Y'\to Z$.
There exists $h:Y\to Z$ with $hg=h'$ and $hf=0$.
\[\begin{tikzcd}
X'\rar["f'"]\dar["g'"'] &Y'\dar["g"]\drar["h'"]\\
X\rar["f"] &Y\rar["h"]&Z
\end{tikzcd}\]
Since $f$ is mono and $f'=\ker(hg)$, the square is cartesian
by Lemma \ref{lm cart}.
\end{proof}

We say that a pointed category $\cC$ has kernels and cokernels if every morphism in $\cC$ has a kernel and a cokernel.
For a morphism $f:X\to Y$ the coimage and the image are
\[\coim(f)=\coker(\ker f):X\to\Coim(f),\qquad \im(f)=\ker(\coker f):\Im(f)\to Y.\]
There is a unique factorization $X\to\Coim(f)\xto{\bar f}\Im(f)\to Y$
of $f$ and the morphism $f$ is called \idef{strict} if $\bar f$ is an isomorphism.
For a morphism $f$ the following conditions are equivalent
\begin{enumerate}
\item $f$ is a strict monomorphism.
\item $f$ is the kernel of some morphism.
\item $f$ is strict and is a monomorphism.
\end{enumerate}
Similarly for epimorphisms.

\begin{lemma}\label{pull/push}
If $\cC$ has kernels and cokernels,
then strict mono are closed under pullbacks along~$\cC$
and strict epi are closed under pushouts along~$\cC$.
\end{lemma}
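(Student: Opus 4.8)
The plan is to prove the two statements separately, each by a direct equalizer/coequalizer argument, since they are dual. Consider the claim that strict monomorphisms are closed under pullbacks along $\cC$. Let $f:X\to Y$ be a strict mono, so $f=\ker(h)$ for some $h:Y\to Z$; and let $g:Y'\to Y$ be arbitrary. The key step is to produce the pullback square explicitly: since $\cC$ has kernels, set $f'=\ker(hg):X'\to Y'$ and let $g':X'\to X$ be the unique morphism with $fg'$ equal to $g f'$, which exists because $h(gf')=(hg)f'=0$ and $f=\ker h$. This gives a commutative square of the shape \eqref{diag1}. Now I would invoke Lemma~\ref{lm cart}(2): $f$ is mono (being a strict mono) and $f'=\ker(hg)$ by construction, so the square is cartesian. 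Thus $f$ admits a pullback along $g$ which is again a strict mono, namely $f'=\ker(hg)$.

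The statement about strict epimorphisms is obtained by the evident dualization: if $f:X\to Y$ is strict epi, write $f=\coker(h)$ for $h:W\to X$, and for $g':X'\to X$ arbitrary set $f'=\coker(hg'):X'\to Y'$; the dual of Lemma~\ref{lm cart} (applied in $\cC^{\mathrm{op}}$) shows the resulting square is cocartesian, so $f'$ is a pushout of $f$ along $g'$ and is a strict epi.

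The only mild subtlety — and the step I would be most careful about — is making sure the square built in the first paragraph really is the commutative diagram to which Lemma~\ref{lm cart}(2) applies: one must check that $g'$ is well-defined and that $hf=0$ so that the outer composite $h\circ f$ in \eqref{diag1} vanishes, matching the hypothesis "$f=\ker h$" used implicitly. Both are immediate from the definitions. There is no genuine obstacle here; the lemma is essentially a formal consequence of Lemma~\ref{lm cart} once one remembers that, since $\cC$ has all kernels and cokernels, the required vertex $X'$ of the pullback (resp.\ pushout) square is available as $\Ker(hg)$ (resp.\ $\Coker(hg')$).
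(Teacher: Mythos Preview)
Your proof is correct and follows essentially the same route as the paper's: the paper also takes $h=\coker(f)$, sets $f'=\ker(hg)$, produces $g'$ from the universal property of $f=\ker(h)$, and then invokes Lemma~\ref{lm cart}(2) to conclude the square is cartesian. One small slip in your dualization: with $f:X\to Y$ strict epi and $f=\coker(h)$ for $h:W\to X$, the pushout is along a map $g':X\to X'$ (not $X'\to X$), and the relevant cokernel is $f'=\coker(g'h):X'\to Y'$; once the arrows are oriented correctly the argument goes through exactly as you indicate.
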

\begin{proof}
For a strict mono $f:X\to Y$ and a morphism $g:Y'\to Y$,
let $h=\coker(f):Y\to Z$ and $f'=\ker(hg):X'\to Y'$ \eqref{diag1}.
We have $hgf'=0$ and $f=\ker(h)$, hence there exists $g':X'\to X$ with $fg'=gf'$.
The corresponding square is cartesian by Lemma~\ref{lm cart}.
\end{proof}

\begin{remark}
A functor between categories with kernels and cokernels
is called left (resp.\ right) exact if it preserves kernels
(resp.\ cokernels).
A functor is called exact if it is both left and right exact.
By Lemma \ref{pull/push},
an exact functor also preserves pullbacks of strict monomorphisms and pushouts of strict epimorphisms.
\end{remark}

\subsection{Proto-exact categories}
\begin{definition}\label{def ex}
A proto-exact category is a pointed category $\cC$
equipped with a class $\Ex$ of exact pairs closed under isomorphism
such that the classes $\aM=\sets{f}{(f,g)\in\Ex}$,
$\aE=\sets{g}{(f,g)\in\Ex}$
of \idef{admissible} mono(morphisms) and epi(morphisms) satisfy
\begin{enumerate}
\item $1_X\in\aM\cap\aE$ for all $X\in\cC$.
\item $\aM$ and $\aE$ are closed under composition.
\item $\aE$ is closed under pullbacks along $\aM$.
\item $\aM$ is closed under pushouts along $\aE$.
\end{enumerate}
\end{definition}

The above pullbacks and pushouts are automatically bicartesian
by Lemma \ref{lm bicart}.

\begin{lemma}\label{pull existence1}
If the classes $\aM,\aE$ in Definition \ref{def ex}
satisfy axiom (2),
then $\aM$ is closed under pullbacks along $\aE$ and $\aE$ is closed under pushouts along $\aM$.
\end{lemma}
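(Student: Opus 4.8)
The plan is to prove the first assertion — that $\aM$ is closed under pullbacks along $\aE$ — by a direct construction, and to deduce the second assertion from it by passing to the opposite category. Indeed $\cC^{\op}$ is again pointed, the pairs $(g^{\op},f^{\op})$ with $(f,g)\in\Ex$ form a class of exact pairs closed under isomorphism, and axiom (2) — closure of $\aM$ and $\aE$ under composition — is manifestly self-dual; for this dual structure the roles of $\aM$ and $\aE$ are exchanged and a pullback in $\cC^{\op}$ is a pushout in $\cC$, so the statement that $\aE$ is closed under pushouts along $\aM$ in $\cC$ is precisely the first assertion applied to $\cC^{\op}$.

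For the first assertion I would argue as follows. Let $i:X\to Y$ lie in $\aM$, so there is an exact pair $(i,q)\in\Ex$; then $q=\coker(i):Y\to\Coker(i)$, $i=\ker(q)$, and $q\in\aE$. Given $p:Z\to Y$ in $\aE$, axiom (2) yields $qp\in\aE$, so there is an exact pair $(k,qp)\in\Ex$ with $k=\ker(qp):W\to Z$ and $k\in\aM$. From $q(pk)=(qp)k=0$ and $i=\ker(q)$ we obtain a unique $i':W\to X$ with $i\,i'=p\,k$, that is, a commutative square
\[\begin{tikzcd}
W\rar["k"]\dar["i'"'] & Z\dar["p"]\\
X\rar["i"] & Y\rar["q"] & \Coker(i)
\end{tikzcd}\]
Now Lemma \ref{lm cart}(2) applies to this diagram — the composite $qi$ is zero, $i$ is mono, and $k=\ker(qp)$ — so the left-hand square is cartesian. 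Thus $k\in\aM$ is a pullback of $i$ along $p$, which is what we want.

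The construction is short and the verifications are routine; the one thing that needs care is arranging the data so that Lemma \ref{lm cart}(2) applies, namely pulling $i$ back not directly but after composing with its cokernel $q$, forming $k=\ker(qp)$ (admissible by axiom (2)), and recovering the comparison map $i'$ from the universal property of $i=\ker(q)$. This mirrors the classical argument in the quasi-abelian case, and I foresee no genuine obstacle. Note that only axiom (2) is used, together with the ambient data of Definition \ref{def ex} (a pointed category with a class of exact pairs closed under isomorphism); axioms (1), (3), and (4) play no role, which is why the lemma is stated as it is.
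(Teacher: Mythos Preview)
Your argument is correct and matches the paper's proof essentially line for line: form $q=\coker(i)\in\aE$, use axiom (2) to get $qp\in\aE$, take $k=\ker(qp)\in\aM$, produce the comparison map via $i=\ker(q)$, and conclude the square is cartesian by Lemma~\ref{lm cart}(2). The only addition is your explicit duality reduction for the second assertion, which the paper leaves tacit.
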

\begin{proof}
For $f:X\to Y$ in $\aM$ and $g:Y'\to Y$ in $\aE$,
let $h=\coker(f):Y\to Z$.
Then $h\in\aE$, hence $hg\in\aE$
and $f'=\ker(hg):X'\to Y'$ is in $\aM$ \eqref{diag1}.
We have $hgf'=0$ and $f=\ker(h)$, hence there exists $g':X'\to X$ with $fg'=gf'$.
The corresponding square is cartesian by Lemma~\ref{lm cart}.
\end{proof}

\begin{remark}
The above results imply that the classes $\aM,\aE$ of morphisms in $\cC$ satisfy
\begin{enumerate}
\item $(0\to X)\in\aM$ and $(X\to0)\in\aE$ for all $X\in\cC$.
\item $\aM$ and $\aE$ are closed under composition
and contain all isomorphisms.
\item
For $f:X\to Y$ in $\aM$ and $g:Y'\to Y$ in $\aE$ there exists a bicartesian square \eqref{sq1} with $f'\in\aM$ and $g'\in\aE$.
\item For $f':X'\to Y'$ in $\aM$ and $g':X'\to X$ in $\aE$ there exists a bicartesian square \eqref{sq1} with $f\in\aM$ and $g\in\aE$.
\end{enumerate}
These are the axioms of a proto-exact category from \cite[\S2.4]{dyckerhoff_higher}.
Conversely, these axioms imply that if $f\in\aE$, then $\ker f\in\aM$ and $(\ker f,f)$ is an exact pair
and if $f\in\aM$, then $\coker f\in\aE$ and $(f,\coker f)$ is an exact pair.
Therefore the axioms of Definition \ref{def ex} are satisfied by
the class $\Ex$ of exact pairs $(f,g)$ for $f\in\aM$ and $g\in\aE$.
\end{remark}

\begin{remark}[Relation to exact categories]
An exact category \cite{quillen_higher,keller_chain} is an additive category~$\cC$ equipped with a class $\Ex$ of exact pairs closed under isomorphism
such that the classes $\aM,\aE$ defined as before satisfy
\begin{enumerate}
\item $1_0\in\aE$
\item $\aE$ is closed under composition.
\item $\aE$ is closed under pullbacks along all morphisms.
\item $\aM$ is closed under pushouts along all morphisms.
\end{enumerate}
One can show that
$1_X\in\aM\cap\aE$ for all $X\in\cC$
and that
$\aM$ is closed under composition
(see~\cite{keller_chain}).
This implies that an exact category is additive and proto-exact,
although the converse is not necessarily true.
\end{remark}

\subsection{Parabelian categories}

\begin{definition}
A category with kernels and cokernels is called \qpa if
\begin{enumerate}
\item Strict epimorphisms are closed under pullbacks along strict monomorphisms.
\item Strict monomorphisms are closed under pushouts along strict epimorphisms.
\end{enumerate}
\end{definition}

Note that the above pullbacks and pushouts exist automatically by Lemma \ref{pull/push}.

\begin{theorem}[\cf {\cite{andre_slope,schneiders_quasi}}]
\label{qpa is pe}
A \qpa category has a canonical proto-exact structure
consisting of all short exact sequences.
\end{theorem}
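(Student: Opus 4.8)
The plan is to verify the four axioms of Definition \ref{def ex} for the class $\Ex$ of all short exact sequences in a \qpa category $\cC$, with $\aM$ the class of all strict monomorphisms and $\aE$ the class of all strict epimorphisms. The class $\Ex$ is closed under isomorphism because the kernel and cokernel conditions defining an exact pair are isomorphism-invariant. Axiom (1), that $1_X\in\aM\cap\aE$, is immediate: $1_X$ is the kernel of $X\to 0$ and the cokernel of $0\to X$. Axioms (3) and (4) — closure of $\aE$ under pullbacks along $\aM$ and of $\aM$ under pushouts along $\aE$ — are exactly the two defining conditions of a \qpa category, and the relevant pullbacks and pushouts exist by Lemma \ref{pull/push} since $\cC$ has kernels and cokernels.

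The only remaining axiom, and the heart of the argument, is axiom (2): $\aM$ and $\aE$ are closed under composition. By the self-duality of all the hypotheses it suffices to treat strict monomorphisms, say $f:X\to Y$ and $f':Y\to Z$, and show $f'f$ is again a strict mono, equivalently the kernel of some morphism. First I would dispose of the easy direction: $f'f$ is a monomorphism since $f$ and $f'$ are. It then remains to show $f'f$ is strict, i.e.\ that the canonical map $\Coim(f'f)\to\Im(f'f)$ is an isomorphism; equivalently, since $f'f$ is already mono, that $f'f=\ker(\coker(f'f))$. The strategy is the standard one from the quasi-abelian setting \cite{schneiders_quasi}: set $q=\coker f':Z\to W$ and form the pushout of $q$ along the strict epi $\coker f$, or rather run the argument through the pushout square built from $f'$ and $\coker f$. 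Concretely, let $p=\coker f:Y\to Y''$; using that $\cC$ is \qpa, push the strict mono $f':Y\to Z$ forward along the strict epi $p$ to obtain a bicartesian square (bicartesian by Lemma \ref{lm bicart}) with a strict mono $f''$ on the opposite side, and then identify $\coker(f'f)$ with a suitable composite of cokernels so that Lemma \ref{lm cart} lets us recognize $f'f$ as the kernel of $(\coker f'')\circ(\text{the induced map into the pushout})$.

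The main obstacle I anticipate is bookkeeping rather than conceptual: one must assemble the two bicartesian squares (the pushout of $f'$ along $\coker f$, and the defining square of $\ker$ and $\coker$) into a single diagram and then apply Lemma \ref{lm cart}(2) — "$f$ mono and $f'=\ker(hg)$ implies the square is cartesian" — and Lemma \ref{lm cart}(1) in the right order, keeping careful track of which maps are mono, which are strict epi, and which squares are known to be cartesian or cocartesian. A clean way to organize this is to first prove the dual-free lemma that in any category with kernels and cokernels, if $f$ is a strict mono with cokernel $h$ and $g$ is any mono with $gf$ mono, then $gf=\ker(\coker(gf))$ provided the pushout of $f$ along $h'$ for appropriate $h'$ stays mono; the \qpa hypothesis is precisely what guarantees this last point. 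Once the composition closure of $\aM$ (hence dually of $\aE$) is established, all four axioms of Definition \ref{def ex} hold, and the concluding sentence after that definition — that the resulting pullbacks and pushouts are automatically bicartesian by Lemma \ref{lm bicart} — shows the structure is genuinely proto-exact; that it "consists of all short exact sequences" is true by construction of $\Ex$.
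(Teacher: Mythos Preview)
Your strategy is exactly the paper's, run in the dual direction: the paper handles strict epis $g:Y'\to Y$, $h:Y\to Z$ by setting $f=\ker h$, pulling $g$ back along $f$ to a bicartesian square (parabelian axiom plus Lemma~\ref{lm bicart}), reading off $f'=\ker(hg)$ via Lemma~\ref{lm cart}(1), and then $hg=\coker(f')$ by pasting cocartesian squares. Your pushout of $f'$ along $p=\coker f$ is the mirror image of that construction.

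There is one wrong detail. After forming the bicartesian square
\[
\begin{tikzcd}
Y\rar["f'"]\dar["p"']&Z\dar["p'"]\\
Y''\rar["f''"]&P
\end{tikzcd}
\]
the correct conclusion is $f'f=\ker(p')$, \emph{not} $f'f=\ker\bigl((\coker f'')\circ p'\bigr)$. The kernel of $(\coker f'')\circ p'$ is the pullback of $f''=\ker(\coker f'')$ along $p'$, and since the square is cartesian that pullback is $f'$, not $f'f$. The clean finish is simply to paste cartesian squares horizontally: the square with top row $X\xrightarrow{f}Y$ and bottom row $0\to Y''$ is cartesian because $f=\ker p$; the displayed square is cartesian because it is bicartesian; hence the outer rectangle is cartesian, giving $f'f=\ker(p')$. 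That already shows $f'f$ is a strict mono, so drop $\coker f''$ entirely and there is no need to identify $\coker(f'f)$ separately.
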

\begin{proof}
We need to show that given strict epimorphisms $Y'\xto{g}Y\xto{h}Z$,
the composition $hg$ is a strict epimorphism.
By assumption, the pullback of $g$ along $f=\ker(h)$ is a strict epimorphism.
\[\begin{tikzcd}
X'\rar["f'"]\dar["g'"'] &Y'\dar["g"]\\
X\rar["f"] &Y\rar["h"]&Z
\end{tikzcd}\]
We claim that $(f',hg)$ is an exact pair.
We have $f'=\ker(hg)$ by Lemma \ref{lm cart}.
Morphisms $g$ and $g'$ are strict epi, hence
the above cartesian square is bicartesian by Lemma \ref{lm bicart}.
Since $h=\coker(f)$ is the pushout of $X\to0$ along $f$, we conclude that $hg$ is the pushout of $X'\to0$ along $f'$, hence $hg=\coker(f')$.
\end{proof}

\begin{remark}
One can define a \qpa category and prove the above theorem by requiring only that the category is pointed and satisfies axioms (1), (2).
We included the requirement that $\cC$ has kernels and cokernels since it is satisfied in all our examples.
\end{remark}

\begin{remark}[Relation to quasi-abelian categories]
A quasi-abelian category \cite{schneiders_quasi}
is an additive category with kernels and cokernels such that
\begin{enumerate}
\item Strict epimorphisms are closed under pullbacks along all morphisms.
\item Strict monomorphisms are closed under pushouts along all morphisms.
\end{enumerate}
Therefore every quasi-abelian category is \qpa.
However, an additive \qpa doesn't have to be quasi-abelian.
\end{remark}

\begin{remark}[Relation to proto-abelian categories]
There are several distinct notions of \pa categories
in the literature \cite{andre_slope,dyckerhoff_higher}.
A proto-abelian category defined by Andr\'e \cite{andre_slope} is a \qpa category with the additional requirement that a morphism $f$ is mono \iff $\ker(f)=0$ and epi \iff $\coker(f)=0$.
For example, the category $\Set_*$ of pointed sets (see \S\ref{sec-pointed}) is not proto-abelian in this sense, since there exist non-monic morphisms $f$ in $\Set_*$ with $\Ker(f)=f\inv(0)=\set{0}$.
A proto-abelian category defined by Dyckerhoff \cite{dyckerhoff_higher} is a \qpa category with the additional requirement that all mono and epimorphisms are strict (this formulation is equivalent to the one in \cite{dyckerhoff_higher} if the category has kernels and cokernels).
For example, if $\cC$ is a \qpa category such that the class of its strict morphisms is closed under composition (such as $\cC=\Set_*$), then its subcategory $\cC^\s$ consisting of all strict morphisms is proto-abelian in this sense.
However, in general, the class of strict morphisms in a \qpa category need not be closed under composition (for example, this happens in \qpa categories of normed and Euclidean spaces \S\ref{sec:normed}).
\end{remark}

\begin{lemma}\label{subcat}
Let $F:\cB\to\cC$ be an exact functor reflecting isomorphisms between categories
with kernels and cokernels.
If $\cC$ is \qpa, then so is $\cB$.
\end{lemma}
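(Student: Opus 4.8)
The plan is to transport the two closure axioms for a \qpa category along the functor $F$, using the fact that an exact functor reflecting isomorphisms preserves and reflects kernels, cokernels, strict monos and strict epis, and (by the remark following Lemma \ref{pull/push}) also preserves the relevant pullback and pushout squares. First I would record the preliminary observations: since $F$ is exact it preserves kernels and cokernels, hence it preserves images and coimages and the canonical factorization $X\to\Coim(f)\xto{\bar f}\Im(f)\to Y$; since $F$ reflects isomorphisms, $f$ is strict in $\cB$ iff $F(f)$ is strict in $\cC$. Combining this with the equivalence "strict mono $\iff$ strict $+$ mono'' from the excerpt (and noting that an exact functor preserves monos, being a kernel, and epis, being a cokernel — or simply that $F(\ker f)$ and $F(\coker f)$ compute $\ker F(f)$, $\coker F(f)$), one gets that $F$ both preserves and reflects strict monomorphisms and strict epimorphisms. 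I would also note that $F$ preserves kernels and cokernels that exist, so all the kernels/cokernels built in $\cB$ are sent to the corresponding ones in $\cC$.

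Next I would verify axiom (1) for $\cB$. Start with a strict epi $g:Y'\to Y$ in $\cB$ and a strict mono $f:X\to Y$ in $\cB$. By Lemma \ref{pull/push} the pullback square \eqref{sq1} with $f'=\ker(\coker(f)\circ g)$ exists in $\cB$; I must show $f'$... wait — axiom (1) concerns closure of strict \emph{epi} under pullback along strict mono, so I instead form the pullback of the strict epi $g$ along the strict mono $f$, which exists in $\cB$ by Lemma \ref{pull/push}, giving a cartesian square with some morphism $g':X'\to X$. Applying $F$, and using that $F$ is exact hence (by the remark after Lemma \ref{pull/push}) preserves pullbacks of strict monomorphisms, the image square in $\cC$ is cartesian; moreover $F(f)$ is a strict mono and $F(g)$ a strict epi in $\cC$, so by the \qpa hypothesis on $\cC$ the morphism $F(g')$ is a strict epi in $\cC$. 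Since $F$ reflects strict epis, $g'$ is a strict epi in $\cB$, which is exactly axiom (1). Axiom (2) is dual: take a strict mono $f':X'\to Y'$ and a strict epi $g':X'\to X$ in $\cB$, form the pushout (which exists by Lemma \ref{pull/push}), apply $F$ (which preserves pushouts of strict epimorphisms), invoke the \qpa hypothesis on $\cC$ to conclude $F(f)$ is a strict mono in $\cC$, and use that $F$ reflects strict monos.

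The only real point requiring care — and the step I would treat as the main obstacle — is the claim that $F$ \emph{reflects} strict monos and strict epis, and that $F$ carries the pullback/pushout square constructed in $\cB$ to the corresponding one in $\cC$ rather than merely to some square with the same cospan. For the reflection claim one argues: $F(\ker f)$ is a kernel of $F(f)$ and $F(\coker f)$ a cokernel of $F(f)$ (exactness), hence $F$ sends $\coim f$ to $\coim F(f)$ and $\im f$ to $\im F(f)$, hence $F(\bar f)=\overline{F(f)}$; if $F(f)$ is strict then $F(\bar f)$ is an isomorphism, and since $F$ reflects isomorphisms, $\bar f$ is an isomorphism, i.e.\ $f$ is strict — combined with the fact that $F$ reflects monos (a section of $F(f)$-type argument, or: $f$ mono $\iff \ker f=0 \iff F(\ker f)=0 \iff \ker F(f)=0$, using that $F$ reflects zero objects as it reflects the isomorphism $0\to F(0)$) and reflects epis dually, this gives the reflection of strict monos and epis via the three-way equivalence in the excerpt. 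For the square-transport claim, the pullback of the strict epi $g$ along the strict mono $f$ in $\cB$ is, by the proof of Lemma \ref{pull/push}, exhibited as $f'=\ker(\coker(f)\circ g)$ together with the induced $g'$; applying the exact functor $F$ sends this to $\ker(\coker(F f)\circ F g)$ with the induced map, which by the same proof is precisely the pullback in $\cC$, so no coherence ambiguity arises. With these two points settled the argument above is complete.
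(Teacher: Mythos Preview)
Your overall strategy matches the paper's: reduce the \qpa axioms for $\cB$ to those for $\cC$ by transporting the relevant squares along $F$ (using that the pullbacks and pushouts in question are built from kernels and cokernels, Lemma~\ref{pull/push}), and isolate the key step as showing that $F$ reflects strict monos and strict epis. The square-transport discussion is fine and agrees with the paper.

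There is, however, a genuine gap in your reflection argument. You claim that $F$ reflects monomorphisms via the chain ``$f$ mono $\iff \ker f=0 \iff F(\ker f)=0 \iff \ker F(f)=0$''. The implication $\ker f=0 \Rightarrow f$ mono is false in this generality: the paper itself points out (in the remark on proto-abelian categories following the definition of \qpa) that $\Set_*$ contains non-monic morphisms with $\Ker(f)=\set0$. So ``$F$ reflects monos'' is not established, and your route through the three-way equivalence (strict $+$ mono $\Rightarrow$ strict mono) does not close as written.

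The fix is already implicit in what you wrote, and is exactly what the paper does. From $Ff$ strict mono you correctly get $\Ker(Ff)=0$, hence $\Ker f=0$ (as $F$ reflects zero objects), hence $\coim(f)=\id_X$ and $\Coim(f)=X$. Your argument that $\bar f$ is an isomorphism then says precisely that the comparison map $a\colon X\to\Im(f)=\Ker(\coker f)$ is an isomorphism, so $f=\im(f)\circ a$ is a kernel, i.e.\ a strict mono --- no separate reflection of ``mono'' is needed. The paper's proof records exactly this comparison map $a$ and shows $Fa$ is an isomorphism, bypassing the coimage entirely. So drop the claim that $F$ reflects monos and conclude directly that $f$ is a kernel once $a$ (equivalently, your $\bar f$) is an isomorphism.
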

\begin{proof}
By Lemma \ref{pull/push},
the pullbacks of strict mono and pushouts of strict epi are constructed by taking kernels and cokernels.
Therefore $F$ preserves them.
We just need to show that $F$ reflects strict mono and epi.
Let $f:X\to Y$ be a morphism in $\cB$ such that $Ff$ is a strict mono.
For $g=\coker(f)$ and $h=\ker(g):X'\to Y$, there exists a unique $a:X\to X'$ with $ha=f$.
\[\begin{tikzcd}
X\rar["f"]\dar["a"']&Y\rar["g"]&Z\\
X'\urar["h"']
\end{tikzcd}\]
We have $Fg=\coker(Ff)$ and $Fh=\ker(Fg)$, hence $Fa$ is an isomorphism since $Ff$ is a strict mono.
Therefore $a$ is an isomorphism.
\end{proof}

\begin{corollary}\label{parab subcat}
Let $\cC$ be a \qpa category and $\cB\sbs\cC$ be a full subcategory closed under taking kernels and cokernels.
Then $\cB$ is \qpa.
\end{corollary}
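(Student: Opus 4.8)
The plan is to deduce Corollary~\ref{parab subcat} directly from Lemma~\ref{subcat} by producing the right functor. Take $F:\cB\emb\cC$ to be the inclusion. Since $\cB$ is full, this is certainly faithful, and a fully faithful functor reflects isomorphisms: if $Ff$ has a two-sided inverse in $\cC$, that inverse is a morphism $Y\to X$ in $\cB$ by fullness, and the identities witnessing that it is inverse to $f$ transport back along the faithful $F$. So the only real content is to check that $F$ is exact, i.e.\ that it preserves kernels and cokernels.

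The key step is therefore: if $\cB\sbs\cC$ is a full subcategory closed under taking kernels and cokernels, then for any morphism $f$ in $\cB$, a kernel of $f$ computed in $\cB$ is also a kernel of $f$ in $\cC$, and dually for cokernels. The hypothesis ``closed under taking kernels and cokernels'' should be read as: $\cB$ has kernels and cokernels, and the inclusion preserves them — equivalently, for $f:X\to Y$ in $\cB$, the object $\Ker_\cC(f)$ (formed in $\cC$) lies in $\cB$ together with the morphism $\ker_\cC(f)$, and this is then automatically a kernel in $\cB$ by fullness. Indeed, given $a:U\to X$ in $\cB$ with $fa=0$, fullness of $\cB$ and the universal property of $\ker_\cC(f)$ in $\cC$ give the unique factorization through $\Ker_\cC(f)$, and all the morphisms involved already live in $\cB$; uniqueness is inherited since $\cB\sbs\cC$. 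Hence $F$ preserves kernels; the cokernel case is dual. (If instead one reads the hypothesis as ``$\cB$ merely has its own kernels and cokernels'', one should spell out in the statement that these agree with those in $\cC$; in all the paper's examples the subcategories are closed under the ambient kernel/cokernel construction, so this is the intended reading.)

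With $F$ exact and reflecting isomorphisms, Lemma~\ref{subcat} applies verbatim: since $\cC$ is \qpa, so is $\cB$. I would write the proof as essentially two sentences — verify $F=$ inclusion is exact and reflects isomorphisms, then invoke Lemma~\ref{subcat}. The main obstacle, such as it is, is purely a matter of unwinding the phrase ``closed under taking kernels and cokernels'' into the statement that the inclusion is an exact functor; once that is granted there is nothing further to prove.
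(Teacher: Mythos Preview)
Your proposal is correct and matches the paper's intended argument: the corollary is stated without proof immediately after Lemma~\ref{subcat}, and the implicit reasoning is precisely to apply that lemma to the inclusion functor $F:\cB\emb\cC$, which is exact (by the hypothesis that $\cB$ is closed under the ambient kernels and cokernels) and reflects isomorphisms (by fullness and faithfulness). Your unpacking of ``closed under taking kernels and cokernels'' is the intended reading throughout the paper.
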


\begin{lemma}\label{T-alg}
Let $T$ be a monad on a parabelian category $\cC$ such that $T:\cC\to\cC$ is right exact (preserves cokernels).
Then the Eilenberg-Moore category $\cC^T$ is parabelian.
\end{lemma}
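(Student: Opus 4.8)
The plan is to transport the parabelian axioms from $\cC$ to $\cC^T$ along the forgetful functor $U:\cC^T\to\cC$, using the fact that $U$ is conservative (reflects isomorphisms) and that, because $T$ is right exact, $\cC^T$ is well-behaved with respect to kernels and cokernels. First I would recall that for any monad $T$ the forgetful functor $U$ creates limits, so in particular $\cC^T$ has kernels and $U$ preserves and reflects them; moreover $U$ reflects isomorphisms. The subtlety is cokernels: these are \emph{colimits}, which $U$ does not create in general. Here is where the hypothesis that $T$ preserves cokernels enters. Given a morphism $f:(X,a)\to(Y,b)$ of $T$-algebras, form $c=\coker(Uf):Y\to Z$ in $\cC$. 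Since $T$ preserves this cokernel, $Tc=\coker(TUf)$, and one checks that the composite $TZ\xto{Tc\inv\text{-image}}$ — more precisely, since $b\circ TUf = Uf\circ a$ and $c\circ Uf=0$, the map $c\circ b:TY\to Z$ kills $TUf$ (because $c\circ b\circ TUf=c\circ Uf\circ a=0$), hence factors uniquely through $Tc$ as $d:TZ\to Z$. A routine diagram chase (again using that $Tc$, and $T^2c$, are cokernels) shows $(Z,d)$ is a $T$-algebra and that $c:(Y,b)\to(Z,d)$ is the cokernel of $f$ in $\cC^T$. Thus $U$ preserves and reflects cokernels as well, so $U$ is an exact functor between categories with kernels and cokernels, and it reflects isomorphisms.

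Next I would like to apply Lemma \ref{subcat} directly — but that lemma is stated for $\cC$ parabelian implying the \emph{source} $\cB$ is parabelian, which is exactly our situation with $\cB=\cC^T$, $\cC=\cC$, $F=U$. So once $U$ is shown to be exact and conservative, Lemma \ref{subcat} finishes the proof immediately. Concretely, unwinding that lemma: the pullback of a strict mono along any morphism and the pushout of a strict epi along any morphism in $\cC^T$ are computed (by Lemma \ref{pull/push}) via kernels and cokernels, which $U$ preserves; and $U$ reflects strict monos and strict epis by the argument given in the proof of Lemma \ref{subcat}. Hence the two parabelian axioms for $\cC^T$ follow from those for $\cC$.

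The main obstacle is the cokernel computation in $\cC^T$: one must verify both that the $T$-algebra structure $d$ on $Z=\coker(Uf)$ exists and is unique, and that $c$ is genuinely the cokernel in $\cC^T$ rather than merely a candidate. Both facts rest squarely on the right-exactness of $T$ — without it, $Tc$ need not be a cokernel and the factorization producing $d$ can fail. I would carry this out as a short lemma-within-the-proof: (i) $c\circ b$ coequalizes $(TUf,0)$, so there is a unique $d$ with $d\circ Tc = c\circ b$; (ii) the unit axiom $d\circ\eta_Z=1_Z$ follows by precomposing with the epimorphism $c$ and using naturality of $\eta$; (iii) the associativity axiom $d\circ Td = d\circ\mu_Z$ follows by precomposing with the epimorphism $T^2c$ (an epimorphism because $T$ preserves the cokernel $c$ and cokernels are epi) and chasing; (iv) for any $T$-algebra map $(Y,b)\to(W,e)$ killing $f$, its underlying map factors uniquely through $c$ in $\cC$, and that factorization is automatically a $T$-algebra map, again by precomposing with the epi $Tc$. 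After this, everything else is a citation of the lemmas already proved.
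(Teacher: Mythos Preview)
Your proposal is correct and follows essentially the same route as the paper: show that $U:\cC^T\to\cC$ is exact (kernels via creation of limits, cokernels via right-exactness of $T$) and conservative, then invoke Lemma~\ref{subcat}. Your cokernel verification is in fact more detailed than the paper's; the only point you leave implicit is that $T0\simeq 0$ (a consequence of right exactness), which is needed so that $\cC^T$ is pointed before one can speak of kernels at all.
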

\begin{proof}
Since $T$ is right exact, we have $T0=0$, hence
$\cB=\cC^T$ has the zero object.
The category~$\cB$ has kernels and cokernels.
Indeed, for $g:Y\to Z$ in $\cB$, consider $f=\ker(g):X\to Y$ in $\cC$.
Then there exists a unique morphism $\al_X:TX\to X$ that makes the following diagram commute
\[\begin{tikzcd}
TX\rar["Tf"]\dar["\al_X"]&TY\rar["Tg"]\dar["\al_Y"]&TZ\dar["\al_Z"]\\
X\rar["f"]&Y\rar["g"]&Z
\end{tikzcd}\]
This equips $X$ with the structure of a $T$-algebra
so that $f$ becomes the kernel of $g$ in $\cB$.
On the other hand, for $f:X\to Y$ in $\cB$, let $g=\coker(f):Y\to Z$ in $\cC$.
Then $Tg=\coker(Tf)$,
hence there exists a unique morphism $\al_Z:TZ\to Z$ that makes the above diagram commute.
This equips $Z$ with the structure of a $T$-algebra
so that $g$ becomes the cokernel of $f$ in $\cB$.
This implies that the forgetful functor $F:\cB=\cC^T\to\cC$ is exact.
It also reflects isomorphisms and we conclude that $\cB$ is parabelian by Lemma~\ref{subcat}.
\end{proof}

Let $\cC$ be a symmetric monoidal category and $A$ be a monoid in $\cC$.
We define the category $\Mod(A)=\Mod(A,\cC)$ of (left) $A$-modules to be the category $\cC^T$ for the monad $T:\cC\to\cC$, $X\mto A\ts X$.

\begin{corollary}
\label{parab-modules1}
Let $\cC$ be a closed symmetric monoidal category and $A$ be a monoid in $\cC$.
If $\cC$ is \qpa, then $\Mod(A)$ is \qpa.
\end{corollary}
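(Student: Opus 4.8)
The plan is to exhibit $\Mod(A)$ as the Eilenberg--Moore category of a right exact monad on $\cC$ and then quote Lemma~\ref{T-alg}. By definition $\Mod(A)=\cC^T$, where $T\colon\cC\to\cC$ is the monad $X\mto A\ts X$ with unit and multiplication induced by the monoid structure of $A$. Since $\cC$ is \qpa it is in particular parabelian, so Lemma~\ref{T-alg} yields the conclusion as soon as we know that $T$ preserves cokernels.

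This is precisely where the hypothesis that $\cC$ is \emph{closed} enters. Being closed means that for every object $A$ the functor $T=A\ts(-)\colon\cC\to\cC$ admits a right adjoint, namely the internal hom $\hom(A,-)$. A functor possessing a right adjoint preserves all colimits that exist in its source, in particular coequalizers, and hence cokernels; thus $T$ is right exact in the sense of the paper. Applying Lemma~\ref{T-alg} to this monad $T$ shows that $\cC^T=\Mod(A)$ is parabelian, as desired.

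I do not expect any genuine obstacle: the argument is a one-line application of Lemma~\ref{T-alg} once the adjunction $A\ts(-)\dashv\hom(A,-)$ is recorded. The only points worth making explicit are that ``right exact'' here means ``preserves cokernels'', that the existence of $\hom(A,-)$ is exactly the closedness assumption, and that all the remaining structure on $\Mod(A)$ that one might worry about — a zero object, existence of kernels and cokernels, exactness of the forgetful functor to $\cC$, and the fact that it reflects isomorphisms — is already furnished by the proof of Lemma~\ref{T-alg} and need not be re-derived here.
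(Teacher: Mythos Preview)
Your proposal is correct and matches the paper's proof essentially verbatim: the paper also observes that $T=A\ts\blank$ is left adjoint to the internal hom $\hom(A,\blank)$, hence right exact, and then (implicitly) invokes Lemma~\ref{T-alg}. One small redundancy: in this paper ``\qpa'' and ``parabelian'' are synonyms, so the clause ``since $\cC$ is \qpa it is in particular parabelian'' can be dropped.
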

\begin{proof}
The functor $T=A\ts\blank:\cC\to\cC$ is right exact as it is left adjoint to the functor $\hom(A,\blank)$, where $\hom$ is the internal \Hom.
\end{proof}

\begin{lemma}\label{Fun}
Let $\cC$ be a \qpa category and $\cI$ be a small category.
Then the category of functors $[\cI,\cC]$ is \qpa.
\end{lemma}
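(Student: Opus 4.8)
The plan is to check the two \qpa axioms for the functor category $[\cI,\cC]$ pointwise, using that limits, colimits, kernels, and cokernels in $[\cI,\cC]$ are all computed objectwise. First I would note that $[\cI,\cC]$ is pointed, with zero object the constant functor at $0\in\cC$, and that it has kernels and cokernels: for a natural transformation $\vi:F\to G$, the kernel is the functor $i\mto\Ker(\vi_i)$ with the evident transition maps, and dually for the cokernel. Since kernels, cokernels, pullbacks, and pushouts are all objectwise, a morphism $\vi$ in $[\cI,\cC]$ is a strict monomorphism (resp.\ strict epimorphism) if and only if each component $\vi_i$ is a strict monomorphism (resp.\ strict epimorphism) in $\cC$: indeed $\vi=\ker(\psi)$ in $[\cI,\cC]$ iff $\vi_i=\ker(\psi_i)$ for all $i$, and by the characterization of strict monos recalled in the excerpt (kernel of some morphism $\iff$ strict mono), the pointwise condition on $\coker(\vi)$ suffices.

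Next I would verify axiom (1): strict epimorphisms are closed under pullbacks along strict monomorphisms. Given a strict mono $\vi:F\to G$ and a strict epi $\psi:G'\to G$ in $[\cI,\cC]$, form the pullback $P=F\times_G G'$ objectwise, so $P(i)=F(i)\times_{G(i)}G'(i)$. By the pointwise characterization, each $\vi_i$ is a strict mono and each $\psi_i$ a strict epi in $\cC$; since $\cC$ is \qpa, the induced map $P(i)\to F(i)$ is a strict epi in $\cC$ for every $i$. By the pointwise characterization again, the map $P\to F$ is a strict epi in $[\cI,\cC]$. The argument for axiom (2) is completely dual: pushouts in $[\cI,\cC]$ are objectwise, the pushout of a strict mono along a strict epi is objectwise a strict mono by the \qpa property of $\cC$, hence a strict mono in $[\cI,\cC]$.

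The only genuine point requiring care — and the step I would spend the most words on — is the claim that strictness is detected pointwise, i.e.\ that the objectwise construction of $\Ker$ and $\Coker$ really does serve as kernel/cokernel \emph{in the functor category}, and consequently that $\coim/\im$ factorizations are objectwise. This follows from the standard fact that a left (resp.\ right) adjoint preserves colimits (resp.\ limits) applied to evaluation functors $\ee_i:[\cI,\cC]\to\cC$, or directly from the universal property: a cone over the constant diagram in $[\cI,\cC]$ is a compatible family of cones in $\cC$. Once this is in place, everything else is a mechanical transfer of the \qpa axioms for $\cC$ through the evaluation functors, which jointly reflect isomorphisms. Alternatively, one could phrase the whole proof as an application of Lemma~\ref{subcat} to the family $\{\ee_i\}_{i\in\cI}$ after observing these functors are exact and jointly conservative, but the direct pointwise verification is shorter. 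I expect no real obstacle beyond bookkeeping.
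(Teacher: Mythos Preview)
Your proposal is correct and follows essentially the same route as the paper's proof: kernels and cokernels in $[\cI,\cC]$ are componentwise, hence strict mono/epi are detected componentwise, and the relevant pullbacks and pushouts (constructed via Lemma~\ref{pull/push} from kernels and cokernels) are likewise componentwise, so the \qpa axioms transfer from $\cC$. You spell out more of the bookkeeping than the paper does, but the argument is the same.
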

\begin{proof}
The kernels and cokernels in $[\cI,\cC]$ are constructed componentwise.
Therefore a morphism is strict mono/epi \iff all of its components are strict mono/epi.
By Lemma \ref{pull/push} the pullbacks of strict monomorphisms and the pushouts of strict epimorphisms are also defined componentwise.
This implies that $[\cI,\cC]$ is \qpa.
\end{proof}

Let $Q=(Q_0,Q_1,s,t)$ be a quiver (directed graph)
and $\Psi$ be a $Q$-diagram of categories and functors,
meaning a collection of categories~$\Psi_i$ for $i\in Q_0$
and functors $\Psi_a:\Psi_{i}\to\Psi_{j}$ for arrows $a:i\to j$ in $Q$.
Let $\Rep(\Psi)$ be the category with objects $X$
that are collections of objects $X_i\in\cA_i$ for all $i\in Q_0$ and
morphisms $X_a:\Psi_aX_i\to X_j$ for all arrows $a:i\to j$ in $Q$ (\cf \cite{mozgovoy_quiver}).
A morphism $f:X\to Y$ in $\Rep(\Psi)$ is a collection of morphisms $f_i:X_i\to Y_i$ for $i\in Q_0$ such that $f_j X_a=Y_a\Psi_a(f_i)$ for all arrows $a:i\to j$.
Let $\Rep^\bi(\Psi)\sbs\Rep(\Psi)$ be the full subcategory consisting of objects $X$ such that the morphisms $X_a$ are isomorphisms for all arrows $a$.

\begin{theorem}\label{Q-diagram}
If $\Psi$ is a $Q$-diagram such that the categories $\Psi_i$ for $i\in Q_0$ are parabelian and the functors $\Psi_a$ for $a\in Q_1$ are right exact, then the category $\Rep(\Psi)$ is \qpa.
If the functors $\Psi_a$ for $a\in Q_1$ are exact,
then the category $\Rep^\bi(\Psi)$ is \qpa.
\end{theorem}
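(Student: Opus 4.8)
The plan is to show that the relevant structure in $\Rep(\Psi)$ is computed componentwise, and then to read off the parabelian axioms from those of the $\Psi_i$. First I would check that $\Rep(\Psi)$ is pointed: its zero object is the family $(0)_{i\in Q_0}$ with all structure maps equal to $0$, which makes sense since a right exact functor preserves the zero object, so $\Psi_a0=0$. Then I would construct kernels and cokernels componentwise. For $f:X\to Y$ with components $f_i$, put $K_i=\Ker(f_i)$ and $k_i=\ker(f_i)$; because $\Psi_a$ preserves zero morphisms, $f_jX_a\Psi_a(k_i)=Y_a\Psi_a(f_ik_i)=0$, so $X_a\Psi_a(k_i)$ factors uniquely through $k_j$, which produces a structure map $K_a:\Psi_aK_i\to K_j$, and a short check (using that each $k_i$ is a monomorphism) shows that $k=(k_i):K\to X$ is the kernel of $f$ in $\Rep(\Psi)$. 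Dually, put $C_i=\Coker(f_i)$ and $c_i=\coker(f_i)$; here I would use that $\Psi_a$ is right exact, so $\Psi_a(c_i)=\coker(\Psi_af_i)$, and then $c_jY_a\Psi_a(f_i)=c_jf_jX_a=0$ forces $c_jY_a$ to factor uniquely through $\Psi_a(c_i)$, which produces a structure map $C_a$ and exhibits $c=(c_i)$ as the cokernel of $f$. In particular $\Rep(\Psi)$ has kernels and cokernels.

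Since kernels, cokernels and composition in $\Rep(\Psi)$ are all computed componentwise, a morphism $f$ is a strict monomorphism --- that is, the kernel of $\coker(f)$ --- if and only if every $f_i$ is one in $\Psi_i$, and likewise for strict epimorphisms. Moreover, by Lemma~\ref{pull/push} the pullback of a strict monomorphism along an arbitrary morphism, and the pushout of a strict epimorphism along an arbitrary morphism, exist and are assembled from the kernel/cokernel construction used in the proof of that lemma; hence these squares, too, are computed componentwise. Now, given a strict monomorphism $f$ and a strict epimorphism $g$ in $\Rep(\Psi)$ with common target, form the cartesian square of Lemma~\ref{pull/push} pulling $f$ back along $g$; in each component it is the analogous square in $\Psi_i$, and since $g_i$ is a strict epimorphism the first parabelian axiom for $\Psi_i$ makes the pullback of $g_i$ along $f_i$ a strict epimorphism. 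Hence that pullback of $g$ is a strict epimorphism in $\Rep(\Psi)$. The second parabelian axiom for $\Rep(\Psi)$ --- closure of strict monomorphisms under pushouts along strict epimorphisms --- follows in the same way, component by component, so $\Rep(\Psi)$ is \qpa.

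For the last assertion, suppose in addition that each $\Psi_a$ is exact. Then $\Rep(\Psi)$ is \qpa, and by Corollary~\ref{parab subcat} it suffices to show that the full subcategory $\Rep^{\bi}(\Psi)$ is closed under the kernels and cokernels computed in $\Rep(\Psi)$. Clearly $0\in\Rep^{\bi}(\Psi)$. If $X,Y\in\Rep^{\bi}(\Psi)$ and $f:X\to Y$, then in the kernel construction above $\Psi_a(k_i)=\ker(\Psi_af_i)$ since $\Psi_a$ is left exact, and because $X_a,Y_a$ are isomorphisms the composite $X_a\Psi_a(k_i)$ is again a kernel of $f_j$; as $K_a$ is by construction the canonical comparison map between the two kernels $X_a\Psi_a(k_i)$ and $k_j$ of $f_j$, it is an isomorphism, so $\Ker(f)\in\Rep^{\bi}(\Psi)$. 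The cokernel is handled dually, using that $\Psi_a$ is right exact. Thus $\Rep^{\bi}(\Psi)$ is a full subcategory of the \qpa category $\Rep(\Psi)$ closed under kernels and cokernels, and Corollary~\ref{parab subcat} applies.

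The step I expect to require the most care is the claim, used throughout, that the pullback and pushout squares produced by Lemma~\ref{pull/push} are genuinely componentwise --- equivalently, that forming a cokernel and then the kernel of a composite commutes with evaluation at each vertex --- together with keeping track of exactly where right exactness of the $\Psi_a$ already suffices (existence of cokernels in $\Rep(\Psi)$, hence the first assertion) and where full exactness is genuinely needed (stability of $\Rep^{\bi}(\Psi)$ under kernels, hence the second).
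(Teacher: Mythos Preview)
Your proof is correct and follows essentially the same route as the paper's: componentwise kernels and cokernels (using right exactness of the $\Psi_a$ for cokernels), hence componentwise strict mono/epi and componentwise pullbacks/pushouts via Lemma~\ref{pull/push}, from which the parabelian axioms are read off; then closure of $\Rep^{\bi}(\Psi)$ under kernels and cokernels (using full exactness) and Corollary~\ref{parab subcat}. The only difference is that the paper outsources the componentwise kernel/cokernel construction to a citation, whereas you spell it out explicitly.
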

\begin{proof}
The category $\Rep(\Psi)$ has kernels and cokernels defined componentwise by \cite{mozgovoy_quiver}.
Therefore a morphism in $\Rep(\Psi)$ is strict mono/epi \iff all of its components are strict mono/epi.
By Lemma \ref{pull/push} the pullbacks of strict monomorphisms and the pushouts of strict epimorphisms are also defined componentwise.
This implies that $\Rep(\Psi)$ is \qpa.

Assuming that the functors $\Psi_a$ for $a\in Q_1$ are exact,
let us show that $\Rep^\bi(\Psi)\sbs\Rep(\Psi)$ is closed under taking kernels and cokernels.
Let $f:X\to Y$ be a morphism in $\Rep^\bi(\Psi)$ and $K=\Ker(f)$ in $\Rep(\Psi)$.
For every arrow $a:i\to j$ we have a commutative diagram
\[\begin{tikzcd}
\Psi_aK_i\rar["\Psi_ah_i"]\dar["K_a"]&\Psi_a X_i\rar["\Psi_af_i"]\dar["X_a"]&\Psi_a Y_i\dar["Y_a"]\\
K_j\rar["h_j"]&X_j\rar["f_j"]&Y_j
\end{tikzcd}\]
where $h_i=\ker(f_i)$ and $\Psi_a h_i=\ker(\Psi_a f_i)$ (since $\Psi$ is exact).
The morphisms $X_a$ and $Y_a$ are isomorphisms, hence $K_a$ is an isomorphism.
This implies that $K\in\Rep^\bi(\Psi)$.
The proof for cokernels is the same.
The category $\Rep^\bi(\Psi)$ is \qpa by Corollary \ref{parab subcat}.
\end{proof}

\section{First examples}

\subsection{Pointed sets}
\label{sec-pointed}
Let $\Set_*$ be the category of pointed sets $(X,0_X)$, where $0=0_X\in X$ is called the \idef{base point}.
This category has all limits and colimits and is parabelian,
with the zero object $\set0$ (also denoted by $0$).
The forgetful functor $U:\Set_*\to\Set$ has the left adjoint \[F:\Set\to\Set_*,\qquad X\mto X_*=(X\sqcup 0_X,0_X).\]
Using the monad $\bF_1=UF:\Set\to\Set$,
we get an equivalence $\bar U:\Set_*\to\Set^{\bF_1}$, $X\mto UX$,
to the \EM $\Mod(\bF_1)=\Set^{\bF_1}$ of $\bF_1$-algebras
(usually called $\bF_1$-modules in this context).

For $\es\ne A\sbs X$, let $X/A\iso (X\ms A)_*$ be the quotient of $X$ by the equivalence relation generated by $x\sim y$ for $x,y\in A$.
We equip $X/A$ with the base point $A$.
For a morphism $f:X\to Y$ in $\Set_*$ we have
\[\Ker(f)=(f\inv(0_Y),0_X),\qquad
\Coker(f)=Y/f(X),\]
\[\Coim(f)=X/f\inv(0_Y)\iso\dom(f)_*,\qquad
\Im(f)=(f(X),0_Y),\]
where $\dom(f)=X\ms f\inv(0_Y)$.
Therefore $f$ is strict \iff $f$ is injective on $\dom(f)$.
This implies that the class of strict morphisms is closed under composition.
Therefore we also have the parabelian subcategory $\Set_*^\s\sbs \Set_*$ consisting of strict morphisms (sometimes called the category of vector spaces over $\bF_1$).

\begin{remark}[Partial functions]
A morphism $f:X_*\to Y_*$ in $\Set_*$ can be identified with a \idef{partial function}
$f:X\pto Y$
(defined on the subset $\dom(f)=X\ms f\inv(0_Y)$,
called the \idef{domain of definition} of $f$).
A partial function $f$ is called a \idef{partial bijection} if $f$ is injective on $\dom(f)$ (we call such maps strict).
Partial bijections $f:X\pto X$ form a monoid
$\mathcal{IS}_X\iso\End_{\Set_*^\s}(X_*)$,
called a \idef{symmetric inverse semigroup}.
A partial function $f$ is called \idef{total} if $\dom(f)=X$,
meaning that $\ker f=0$.
\end{remark}

The products and coproduct in $\Set_*$ are defined by
\[(X\xx Y,(0_X,0_Y)),\qquad X\oplus Y=(X\sqcup Y)/\set{0_X, 0_Y}.\]
The category $\Set_*$ has the structure of a closed symmetric monoidal category (\cf Remark \ref{com-monads}) with the tensor product
$X\m Y=(X\xx Y)/(X\xx\set{0_Y}\cup \set{0_X}\xx Y)$
(called the \idef{smash product})
and the internal hom-object $\hom(X,Y)=\Hom(X,Y)$ equipped with the base point $0_{XY}$.
The identity object for this tensor product is $\one=\bF_1\set1=\set{0,1}$.
There is a duality functor (\cf \cite{young_degenerate})
\[D:(\Set_*^\s)^\op\to\Set_*^\s,\qquad X\mto\Hom_{\Set_*^\s}(X,\one).\]
For every $\vi\in\Hom_{\Set_*^\s}(X,\one)$,
either $\vi=0$ or there exists a unique $x\in X\ms0_X$ with $\vi\inv(1)=\set x$.
Therefore we can identify $DX$ with $X$.
For a strict morphism $f:X\to Y$, the dual $f^*=Df$ is given by
\begin{equation}\label{dual}
f^*:Y\to X,\qquad
f^*(y)=
\begin{cases}
f\inv(y)& y\in f(X)\ms 0_Y,\\
0_X& \text{otherwise}.
\end{cases}
\end{equation}
This implies that $D^2\iso\id$.

\subsubsection{Modules over monoids}
\label{mod over monoids}
A monoid $A$ in $(\Set_*,\m)$ is called a pointed monoid.
It can be identified with a usual monoid that has an absorbing element (element $0\in A$ such that $0x=x0=0$ for all $x\in A$).
Given a pointed monoid $A$, we consider the monad $T=A\m-:\Set_*\to\Set_*$ and define the category of left $A$-modules (or $A$-sets) $\Mod(A)=\Set_*^T$ (\cf \cite{chu_sheaves,szczesny_hall}).
Since $T$ is left adjoint to $\hom(A,-)$,
it preserves colimits, hence $\Mod(A)$ has all limits and colimits \S\ref{fini}.
In particular, $T$ is right exact,
hence $\Mod(A)$ is (strictly) \qpa by Lemma \ref{T-alg}.
For a usual monoid $G$, consider the pointed monoid $G_*=G\sqcup 0_G$
with the absorbing element $0_G$.
The category $\Set_*^G=\Mod(G_*)$ consists of pointed sets equipped with a $G$-action and can be identified with the category of functors $[G,\Set_*]$, where $G$ is interpreted as a category with one object and the endomorphism monoid $G$.
The category $[G,\Set_*^\s]$ of strict $G$-modules (called modules of type \al in \cite{szczesny_hopf})
is also parabelian by Lemma \ref{Fun}.

\subsubsection{Quiver representations}
Let $\vect_{\bF_1}\sbs\Set_*^\s$ be the full subcategory of finite pointed sets,
interpreted as the category of finite-dimensional vector spaces over $\bF_1$.
Given a quiver~$Q$, let $\cP(Q)$ be its path category (with the vertices of $Q$ as objects and the paths of $Q$ as morphisms).
The category of representations
$\Rep(Q,\bF_1)=[\cP(Q),\vect_{\bF_1}]$
(\cf \cite{szczesny_representations})
is parabelian by Lemma \ref{Fun}.
Let $A=\bar\cP(Q)$ be the pointed path semigroup of $Q$, with the product $pq$ of two paths defined by the concatenation of $p$ and $q$ if they are compatible and defined to be zero otherwise.
This semigroup doesn't have the identity unless $Q$ has only one vertex.
Moreover, unlike in the additive case, one can not identify $\Rep(Q,\bF_1)$ with the category of $A$-modules in $\vect_{\bF_1}$ unless $Q$ has only one vertex.
More precisely, there is a pair of adjoint functors
\[L:\Rep(Q,\bF_1)\to\Mod A,\ X\mto\bop_{i\in Q_0}X_i,\qquad
R:\Mod A\to\Rep(Q,\bF_1),\ M\mto (e_iM)_{i\in Q_0},
\]
where $e_i\in A$ is the trivial path at $i\in Q_0$.
There is an isomorphism $\id \iso RL$,
but the canonical map $LRM=\bop_{i\in Q_0}e_iM\to M$ is not surjective in general.

\subsubsection{Pre-crystals}
Using Kashiwara crystals \cite[\S7.2]{kashiwara_crystal} as a motivation, we define a \idef{pre-crystal}
to be a set $B$ equipped with the maps (for a fixed index set $I$)
\[e_i:B\to B_*=B\sqcup\set0,\qquad f_i:B\to B_*,\qquad i\in I,\]
such that $b'=f_ib$ \iff $b=e_ib'$ for $b,b'\in B$.
We can consider $e_i,f_i$ as morphisms $e_i:B_*\to B_*$, $f_i:B_*\to B_*$ of pointed sets.
Then the above axiom means that the morphisms $e_i,f_i$ are strict and dual to each other \eqref{dual}.
Therefore, we can define a pre-crystal to be a pointed set $X$ equipped with strict morphisms $e_i:X\to X$ for $i\in I$.
The original pre-crystal is obtained by considering $B=X\ms\set0$ and $f_i=e_i^*$.
In what follows we will denote $e_ax$ by $ax$
for $a\in I$, $x\in X$.

A morphism $f:X\to Y$ between pre-crystals is a pointed map such that if $f(x)\ne0$ and $f(ax)\ne0$, then $f(ax)=af(x)$ for $a\in I$
(\cf \cite[\S7.2]{kashiwara_crystal}).
Let $\PC$ denote the category of pre-crystals.
We will show that it has the same kernels and cokernels as $\Set_*$.

\begin{lemma}
The category $\PC$ has kernels and cokernels and the forgetful functor $F:\PC\to\Set_*$ is exact and reflects isomorphisms.
Therefore $\PC$ is \qpa.
\end{lemma}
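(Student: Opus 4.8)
The plan is to invoke Lemma~\ref{subcat} for the forgetful functor $F:\PC\to\Set_*$. Since $\Set_*$ is \qpa, it suffices to show that $\PC$ has kernels and cokernels, that $F$ preserves both (so $F$ is exact), and that $F$ reflects isomorphisms. The guiding observation is that one can realize the kernel and cokernel of a pre-crystal morphism $f:X\to Y$ on the same underlying pointed sets as the kernel $f\inv(0_Y)$ and cokernel $Y/f(X)$ of $f$ in $\Set_*$, by equipping these sets with appropriate pre-crystal structures; then $F$ preserves them automatically.

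For kernels, set $K=f\inv(0_Y)\sbs X$ with base point $0_X$ and inclusion $\kappa:K\to X$, and define $e_i^K:K\to K$ by $e_i^K(x)=e_ix$ if $e_ix\in K$ and $e_i^K(x)=0_X$ otherwise. Since $e_i^K$ coincides with the strict map $e_i$ on its domain of definition, it is strict, so $(K,(e_i^K)_{i\in I})$ is a pre-crystal, and $\kappa$ is a morphism of pre-crystals, because $\kappa(x)\ne0$ and $\kappa(e_i^Kx)\ne0$ force $e_i^Kx=e_ix=e_i\kappa(x)$. I would then check the universal property: for any $g:Z\to X$ in $\PC$ with $fg=0$, the unique pointed factorization $\bar g:Z\to K$ of $g$ through $\kappa$ is a morphism of pre-crystals, since whenever $\bar g(z)$ and $\bar g(e_iz)$ are both nonzero the morphism axiom for $g$ gives $g(e_iz)=e_ig(z)$, and this element lies in $K$ as $fg=0$, hence equals $e_i^K\bar g(z)$. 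Cokernels are dual: on $C=Y/f(X)$ with projection $\pi:Y\to C$, define $\bar e_i(0_C)=0_C$ and $\bar e_i(\pi(y))=\pi(e_iy)$ for $y\in Y\ms f(X)$ (well defined, since every nonzero element of $C$ has a unique representative outside $f(X)$); one verifies that $\bar e_i$ is strict, that $\pi$ is a pre-crystal morphism, and that for $g:Y\to Z$ with $gf=0$ the induced map $\bar g:C\to Z$ is a pre-crystal morphism, using $\bar g(\bar e_i\pi(y))=g(e_iy)$ (because $g$ vanishes on $f(X)$) and then the morphism axiom for $g$. This produces kernels and cokernels in $\PC$, and by construction $F$ carries them to the kernels and cokernels of $\Set_*$, so $F$ is exact.

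It remains to see that $F$ reflects isomorphisms. If $Ff$ is an isomorphism of pointed sets, then $f$ is a bijection, and one checks that its set inverse $f\inv:Y\to X$ is again a morphism of pre-crystals: given $f\inv(y)\ne0$ and $f\inv(e_iy)\ne0$, unwinding the morphism axiom for $f$ at $x=f\inv(y)$, together with the bijectivity of $f$, gives $f(e_ix)=e_iy$, whence $f\inv(e_iy)=e_ix=e_if\inv(y)$. With $F$ exact and reflecting isomorphisms and $\Set_*$ \qpa, Lemma~\ref{subcat} shows that $\PC$ is \qpa.

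The substantive work is the construction step, whose one genuine subtlety is that the operators $e_i$ do \emph{not} simply restrict to $f\inv(0_Y)$ — they must be truncated to $0$ whenever $e_ix$ would leave that subset — and dually they do not descend naively onto the collapsed class of $Y/f(X)$; so the induced operators have to be written down explicitly and then shown to be strict and to make $\kappa$ and $\pi$ into a kernel and a cokernel inside $\PC$, and this is the one place where the pre-crystal morphism axiom is genuinely used. Checking that $F$ reflects isomorphisms is then a short unwinding of the same axiom, and the concluding appeal to Lemma~\ref{subcat} is immediate.
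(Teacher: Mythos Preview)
Your overall strategy and the construction of kernels and cokernels match the paper's proof essentially line for line: the paper equips $K=f^{-1}(0)$ and $C=Y/f(X)$ with exactly the truncated operators you describe, checks the universal properties in the same way, and then invokes Lemma~\ref{subcat}.

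There is, however, a genuine gap in your argument that $F$ reflects isomorphisms. You claim that from $x=f^{-1}(y)\ne0$ and $f^{-1}(e_iy)\ne0$ the morphism axiom for $f$ gives $f(e_ix)=e_iy$. But that axiom only applies when both $f(x)\ne0$ \emph{and} $f(e_ix)\ne0$, and nothing in your hypotheses forces $e_ix\ne0$. This is not repairable: take $X=Y=\{0,a,b\}$ with $e^X$ the zero map and $e^Y$ given by $a\mapsto b$, $b\mapsto0$ (both strict). The identity map $X\to Y$ is a pre-crystal morphism because the compatibility condition is vacuous ($e^Xx=0$ always), and it is a bijection of pointed sets; yet its set-theoretic inverse is \emph{not} a pre-crystal morphism, since at $y=a$ one would need $b=e^X(a)=0$. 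Thus $F$ does not reflect isomorphisms under the notion of morphism defined here, and the appeal to Lemma~\ref{subcat} breaks down.

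The paper disposes of this point with the single sentence ``It is clear that $F$ reflects isomorphisms'', so the same issue is present there; your kernel and cokernel arguments are correct and agree with the paper's, but the final step is a shared gap rather than something you have failed to reproduce.
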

\begin{proof}
It is clear that $F$ reflects isomorphisms.
For a morphism $f:X\to Y$ in $\PC$, we equip $K=f\inv(0)$ with the crystal structure by $a.x=ax$ if $ax\in K$ and zero otherwise.
The inclusion $K\emb X$ is a morphism of crystals: if $a.x\ne0$, then $a.x=ax$.
Given a morphism $U\xto g X$ with $fg=0$, we obtain the map $g:U\to K$ between pointed sets.
If $g(x),g(ax)\ne0$ for some $x\in U$, then $ag(x)=g(ax)\in K$, hence $a.g(x)=ag(x)=g(ax)$. Therefore $g$ is a morphism of crystals.
We conclude that $K=\Ker(f)$.

For a morphism $f:X\to Y$ in $\PC$, we equip $C=Y/f(X)$ with the crystal structure by $a.y=ay$ if $ay\in Y\ms f(X)$ and zero otherwise.
The projection $\pi:Y\to C$ is a morphism of crystals.
Indeed, if $\pi(y),\pi(ay)\ne0$, then $y,ay\in Y\ms f(X)$, hence $a.\pi(y)=ay=\pi(ay)$.
Given a morphism $g:Y\to U$ such that $gf=0$, we obtain the induced map $g':C\to U$ between pointed sets.
If $g'(y)\ne0$ and $g'(a.y)\ne0$, then $g(y)\ne0$ and $g(ay)\ne0$,
hence $g'(a.y)=g(ay)=ag(y)$.
Therefore $g'$ is a morphism of crystals.
We conclude that $C=\Coker(f)$.
\end{proof}

More generally, given a quiver $Q$, we define a $Q$-pre-crystal to be an object in $[\cP(Q),\Set_*^\s]$.
Explicitly, it is a collection of pointed sets $(X_i)_{i\in Q_0}$ and strict morphisms $X_a:X_i\to X_j$ for arrows $a:i\to j$ in $Q$ (we denote $X_a x$ by $ax$ for $a:i\to j$ and $x\in X_i$).
For example, brane tiling crystals \cite{mozgovoy_noncommutative}
provide instances of $Q$-pre-crystals.
Morphisms between $Q$-pre-crystals differ from morphisms in $[\cP(Q),\Set_*^\s]$.
Namely, a morphism $f:X\to Y$ of $Q$-pre-crystals is a collection of pointed maps $f_i:X_i\to Y_i$ for $i\in Q_0$ such that
if $f_i(x)\ne0$ and $f_j(ax)\ne0$ for $a:i\to j$ and $x\in X_i$,
then $f_j(ax)=af_i(x)$.
As before, one can show that the category of $Q$-pre-crystals is \qpa.

\subsection{Normed and Euclidean spaces}
\label{sec:normed}
Let $\NS$ be the category of normed finite-dimensional $\bR$-vector spaces
with linear maps of norm $\le1$ as morphisms.
This category has finite limits and colimits.
In particular,
\begin{enumerate}
\item The product $X\prod Y$ of $X,Y\in\NS$ is $X\oplus Y$ with the norm $\nn{(x,y)}=\max\set{\nn x,\nn y}$.
\item The coproduct $X\coprod Y$ of $X,Y\in\NS$ is $X\oplus Y$ with the norm $\nn{(x,y)}=\nn x+\nn y$.
\item The kernel of $f:X\to Y$ is $f\inv(0)$ with the induced norm.
\item The cokernel of $f:X\to Y$ is $Y/f(X)$ with the quotient norm
\[\nn{y+f(X)}=\inf\sets{\nn{y+z}}{z\in f(X)},\qquad y\in Y.\]
\end{enumerate}

\begin{theorem}
The category $\NS$ is \qpa.
\end{theorem}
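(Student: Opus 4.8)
The plan is to make the strict monomorphisms and strict epimorphisms of $\NS$ completely explicit, and then verify the two \qpa\ axioms by computing the relevant pullback and pushout directly from the descriptions (1)--(4) of products, coproducts, kernels and cokernels. \emph{Strict morphisms.} From (3),(4) the canonical factorization of $f\colon X\to Y$ is $X\onto X/f^{-1}(0)\xto{\bar f}f(X)\emb Y$, with the quotient norm on $X/f^{-1}(0)$, the subspace norm on $f(X)$, and $\bar f$ the evident linear isomorphism; since $\|f(x)\|_Y\le\|x\|_X$ one always has $\|\bar f\|\le 1$, so $f$ is strict exactly when $\|f(x)\|_Y=\inf\sets{\|x+k\|_X}{f(k)=0}$ for all $x\in X$. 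As the monomorphisms of $\NS$ are the injective maps and the epimorphisms the surjective maps, this yields: $f$ is a strict monomorphism iff it is an isometric embedding ($\|f(x)\|_Y=\|x\|_X$ for all $x$), and $f$ is a strict epimorphism iff it is surjective and $\|y\|_Y=\inf\sets{\|x\|_X}{f(x)=y}$ for all $y$ (the infimum attained, by finite-dimensionality).

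\emph{Axiom (1).} Given a strict epimorphism $f\colon X\to Y$ and a strict monomorphism $g\colon Y'\to Y$, the pullback in $\NS$ is $X'=\sets{(x,y')\in X\oplus Y'}{f(x)=g(y')}$ with the restriction of the product norm $\max\{\|x\|_X,\|y'\|_{Y'}\}$, and the map to be tested is the projection $f'\colon X'\to Y'$, which is plainly surjective. For $y'\in Y'$ one computes
\[\inf\sets{\max\{\|x\|_X,\|y'\|_{Y'}\}}{f(x)=g(y')}=\max\bigl\{\|y'\|_{Y'},\ \inf\sets{\|x\|_X}{f(x)=g(y')}\bigr\}=\max\{\|y'\|_{Y'},\|g(y')\|_Y\}=\|y'\|_{Y'},\]
the second equality because $f$ is a metric quotient and the third because $g$ is isometric. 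Hence $f'$ is surjective with the quotient-norm property, i.e.\ a strict epimorphism.

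\emph{Axiom (2).} Given a strict monomorphism $f'\colon X'\to Y'$ and a strict epimorphism $g'\colon X'\to X$, the pushout in $\NS$ is $Y=(X\amalg Y')/W$ with $W=\sets{(g'(x'),-f'(x'))}{x'\in X'}$ and the quotient of the coproduct norm $\|x\|_X+\|y'\|_{Y'}$ (a brief universal-property check confirms this). The map to be tested is $f\colon X\to Y$, $x\mapsto[(x,0)]$, which is injective because $f'$ is, and
\[\|[(x,0)]\|_Y=\inf_{x'\in X'}\bigl(\|x+g'(x')\|_X+\|f'(x')\|_{Y'}\bigr)=\inf_{x'\in X'}\bigl(\|x+g'(x')\|_X+\|x'\|_{X'}\bigr).\]
Taking $x'=0$ gives $\|[(x,0)]\|_Y\le\|x\|_X$, while $\|x+g'(x')\|_X+\|x'\|_{X'}\ge\|x+g'(x')\|_X+\|g'(x')\|_X\ge\|x\|_X$ (triangle inequality, using $\|g'\|\le 1$) gives the reverse inequality. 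So $f$ is an isometric embedding, i.e.\ a strict monomorphism.

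The calculations are all essentially one line once the first step is in place, so the only real point is the identification of strict monos with isometric embeddings and of strict epis with metric quotient maps; after that, axioms (1) and (2) are immediate manipulations of norms. The one thing to be careful about is to use the correct norms on the pullback (subspace of the $\max$-norm) and on the pushout (quotient of the sum-norm), which is exactly what (1)--(4) supply. (One could alternatively deduce axiom (2) from axiom (1) via the norm-duality $X\mapsto X^{\ast}$ on $\NS$, but the direct check is just as short.)
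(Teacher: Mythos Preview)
Your proof is correct. The identification of strict monos with isometric embeddings and strict epis with metric quotient maps is accurate, and both norm computations go through as written.

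The route differs somewhat from the paper's. You compute the pullback and pushout explicitly as a subspace of the product (with $\max$-norm) and a quotient of the coproduct (with sum-norm), and then manipulate those norms directly. The paper instead presents the pullback as $g^{-1}(X)\subset Y'$ with the subspace norm (isometric to your description once one notes $\|y'\|=\|g(y')\|\le\|x\|$), so part~(1) becomes a one-line restriction of the quotient-norm identity for $g$. For part~(2) the paper does not build the pushout from the coproduct at all: it uses that a cocartesian square with monic $f'$ is bicartesian in vector spaces, and that the pushout $g$ of the strict epi $g'$ is again a strict epi (this is Lemma~\ref{pull/push}, valid in any pointed category with cokernels). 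Then $\|x\|_X=\inf_{g'(x')=x}\|x'\|=\inf_{g(y')=x}\|y'\|=\|x\|_Y$ immediately.

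What each approach buys: yours is entirely self-contained---it needs only items (1)--(4) from the preamble and no prior lemmas---and makes transparent exactly which norm is sitting where. The paper's argument is shorter once Lemma~\ref{pull/push} is available and avoids the coproduct norm altogether; it also makes visible the general mechanism (bicartesianness plus automatic preservation of strict epis under pushout) that recurs in the other examples of the paper. Your parenthetical remark about deducing (2) from (1) via $X\mapsto X^*$ is a nice third option, though one should note that dualizing swaps the roles of strict monos and strict epis \emph{and} of pullbacks and pushouts, so it really does reduce one axiom to the other.
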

\begin{proof}
\clm1 Given a cartesian square
\[\begin{tikzcd}
X'\rar["f'"]\dar["g'"']&Y'\dar["g"]\\
X\rar["f"]&Y
\end{tikzcd}\]
with strict mono $f$ and strict epi $g$,
we need to show that $g'$ is strict epi.
We know that $f'$ is strict mono.
We can identify $X$ with $f(X)\sbs Y$ and $X'$ with $g\inv(X)\sbs Y'$ equipped with the induced norm.
For $y\in Y$ we have $\nn{y}=\inf\sets{\nn{y'}}{g(y')=y}$.
Therefore, for $x\in X$,
we have $\nn{x}=\inf\sets{\nn{x'}}{g'(x')=x}$.
This implies that $g'$ is strict epi.

\clm2
Assuming that the above square is cocartesian, with strict mono $f'$ and strict epi $g'$,
we need to show that $f$ is strict mono.
The map $f$ is injective and the diagram is a bicartesian square of vector spaces.
Therefore we can identify $X$ with $f(X)$ and $X'$ with $g\inv(X)$.
Since $g'$ and $g$ are strict epi, we have, for $x\in X$,
$\nn x_X=\inf\sets{\nn{x'}}{g'(x')=x}
=\inf\sets{\nn{y'}}{g(y')=x}=\nn x_Y$.
\end{proof}

Let $\ES\sbs\NS$ be the full subcategory of finite-dimensional $\bR$-vector spaces equipped with an inner product.
The objects of $\ES$ are called Euclidean spaces and the induced norm is called the Euclidean norm.
The category $\ES$ doesn't have products or coproducts (the orthogonal direct sum $X\oplus Y$ equipped with the Euclidean norm
$\nn{(x,y)}=(\nn x^2+\nn y^2)^{1/2}$ is neither a product nor a coproduct in $\ES$).
But it has kernels and cokernels.

\begin{lemma}
The full subcategory $\ES\sbs \NS$ is closed under taking kernels and cokernels.
Therefore $\ES$ is \qpa.
\end{lemma}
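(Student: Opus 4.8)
The plan is to show that, for a morphism $f\colon X\to Y$ in $\NS$ with $X,Y\in\ES$, both the kernel and the cokernel of $f$ formed in $\NS$ again lie in $\ES$; granting this, the statement that $\ES$ is \qpa follows immediately from Corollary~\ref{parab subcat}, since $\NS$ is \qpa and $\ES\sbs\NS$ is a full subcategory.

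First I would treat the kernel, which is the easy half: $\Ker(f)=f\inv(0)\sbs X$ carries the norm induced from $X$, and restricting the inner product of $X$ to the subspace $f\inv(0)$ produces an inner product whose associated norm is exactly that induced norm, so $\Ker(f)\in\ES$. For the cokernel I would argue as follows. Write $V=f(X)\sbs Y$, let $V^\perp$ be its orthogonal complement in $Y$, so that $Y=V\oplus V^\perp$, and let $p\colon Y\to V^\perp$ be the orthogonal projection; it induces a linear isomorphism $Y/V\isoto V^\perp$. The key point is that this isomorphism is isometric once $V^\perp$ is given the restricted inner product: for $y\in Y$, writing $y=v+w$ with $v\in V$ and $w=p(y)\in V^\perp$, the element $v+z$ runs over all of $V$ as $z$ does, and the Pythagorean identity gives $\nn{w+(v+z)}^2=\nn w^2+\nn{v+z}^2\ge\nn w^2$, with equality precisely when $v+z=0$. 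Hence the quotient norm satisfies $\inf\sets{\nn{y+z}}{z\in V}=\nn w=\nn{p(y)}$, which is exactly the Euclidean norm of the class of $y$ under the identification $Y/V\isoto V^\perp$. Thus $\Coker(f)=Y/f(X)\in\ES$.

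Since $\ES$ is then a full subcategory of the \qpa category $\NS$ closed under kernels and cokernels --- and in particular has kernels and cokernels, computed as in $\NS$ --- Corollary~\ref{parab subcat} gives that $\ES$ is \qpa. Everything here is elementary, and I do not expect a genuine obstacle; the one step that needs to be carried out with some care is the cokernel computation, namely the identification of the quotient norm on $Y/f(X)$ with the norm induced by the inner product on the orthogonal complement $f(X)^\perp$, which amounts to the familiar fact that orthogonal projection onto a subspace realizes the distance to that subspace.
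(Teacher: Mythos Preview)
Your proposal is correct and follows essentially the same approach as the paper: identify the kernel with the subspace $f\inv(0)$ carrying the restricted inner product, identify the cokernel with $f(X)^\perp$ via orthogonal projection, and then invoke Corollary~\ref{parab subcat}. The paper merely asserts that the induced norms agree with those computed in $\NS$, whereas you supply the Pythagorean argument explicitly, but the content is the same.
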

\begin{proof}
The kernel and cokernel of $f:X\to Y$ in $\ES$ are
\begin{enumerate}
\item $f\inv(0)$ equipped with the induced inner product.
\item The orthogonal complement $f(X)^\perp\iso Y/f(X)$
equipped with the induced inner product.
\end{enumerate}
The induced norms of the above kernel and cokernel coincide with the norms of the kernel and cokernel in $\NS$.
\end{proof}

\begin{remark}
The above description of kernels and cokernels in $\ES$ implies that
a morphism $f:X\to Y$ is strict \iff $f:\Ker(f)^\perp\to f(X)$ is an isometry.
The class of all strict morphisms is not closed under composition.
The strict mono/epi in $\ES$ are exactly the split mono/epi.
The pullbacks of strict epimorphisms don't exist in general in $\ES$.
For example, the pullback of $X\to 0$ along $Y\to 0$ is the product $X\xx Y$ which doesn't exist in general in $\ES$.
\end{remark}

\begin{remark}
We can similarly define the category $\NS_\bC$ of normed finite-dimensional $\bC$-vector spaces and the full subcategory $\HS\sbs\NS_\bC$ of Hermitian spaces.
Both of them are \qpa.
\end{remark}

\subsection{Hermitian vector bundles}
If $\cO$ is a Dedekind domain, then the category $\proj(\cO)$ of finitely-generated projective $\cO$-modules is quasi-abelian, hence \qpa.
The strict mono/epi are exactly the split mono/epi (similarly to the categories $\ES$ and $\HS$).
Let $K$ be a number field, $\cO=\cO_K$ be its ring of integers and $\Si$ be the set of embeddings of $K$ in $\bC$.
A Hermitian vector bundle $(E,h)$ on $S=\Spec\cO$ consists of $E\in\proj(\cO)$ and a collection
$h=(h_\si)_{\si\in\Si}$ of Hermitian forms
on $E\ts_{\si}\bC$
such that $h_{\bar\si}(x,y)=h_\si(y,x)$
for $x,y\in E$ and $\si\in\Si$
(invariance under complex conjugation).
Let $\Bun(\bar S)$ be the category of Hermitian vector bundles, where morphisms $f:(E,h)\to(E',h')$ are $\cO$-linear maps $f:E\to E'$ such that $\nn{fx}_\si\le\nn x_\si$ for all $\si\in\Si$ and $x\in E$.
We will show that $\Bun(\bar S)$ is \qpa.

\begin{example}
For $K=\bQ$ and $\cO=\bZ$, there is only one embedding $\si:\bQ\to\bC$ and $\bar\si=\si$.
Therefore, for every $(E,h)\in\Bun(\ub{\Spec\bZ})$, the Hermitian form $h_\si$ corresponds to a (Euclidean) inner product on $E_\bR=E\ts_\bZ\bR$.
We can alternatively describe the category $\Bun(\ub{\Spec\bZ})$ as follows.
Consider the diagram $\Psi$ consisting of the exact functor
$\Psi_\si:\proj(\bZ)\to\Vect_\bR$, $E\mto E_\bR$,
and the exact forgetful functor $\Psi_{\si'}:\ES\to\Vect_\bR$.
Then $\Bun(\ub{\Spec\bZ})\iso\Rep^\bi(\Psi)$
and $\Rep^\bi(\Psi)$ is \qpa by Theorem \ref{Q-diagram}.
\end{example}

\begin{theorem}
Let $K$ be a number field and $\cO$ be its ring of integers.
Then the category $\Bun(\ub{\Spec\cO})$ of Hermitian vector bundles on $\Spec\cO$ is \qpa.
\end{theorem}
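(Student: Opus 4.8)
The plan is to realise $\Bun(\ub{\Spec\cO})$ as a category of the form $\Rep^\bi(\Psi)$ for a suitable $Q$-diagram $\Psi$ of parabelian categories glued by \emph{exact} functors, generalising the description of $\Bun(\ub{\Spec\bZ})$ given in the preceding example for $K=\bQ$, and then to invoke Theorem~\ref{Q-diagram}. First I would sort out the archimedean places: write $\Si$ as the disjoint union of the set $\Si_\bR$ of real embeddings and the conjugate pairs $\set{\si,\bar\si}$ indexed by the complex places, and fix a representative $\si_v$ for each complex place $v$. Because of the invariance relation $h_{\bar\si}(x,y)=h_\si(y,x)$, at a real embedding $\si$ the datum $h_\si$ is simply a Euclidean inner product on $E\ts_\si\bR$, while at a complex place $v$ the datum is a single Hermitian form on $E\ts_{\si_v}\bC$ (the form at $\bar\si_v$ being determined by it). Accordingly, let $Q$ be the quiver with one "source" vertex $*$, a pair of vertices $c_\si,e_\si$ and arrows $*\to c_\si\lto e_\si$ for each $\si\in\Si_\bR$, and a pair of vertices $c_v,h_v$ and arrows $*\to c_v\lto h_v$ for each complex place $v$; and let $\Psi$ be the $Q$-diagram with $\Psi_*=\proj(\cO)$, $\Psi_{e_\si}=\ES$, $\Psi_{h_v}=\HS$, $\Psi_{c_\si}=\Vect_\bR$, $\Psi_{c_v}=\Vect_\bC$, where the arrow $*\to c_\si$ is sent to $E\mto E\ts_\si\bR$, the arrow $*\to c_v$ to $E\mto E\ts_{\si_v}\bC$, and the arrows $e_\si\to c_\si$ and $h_v\to c_v$ to the forgetful functors.

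Next I would verify the hypotheses of Theorem~\ref{Q-diagram}. All vertex categories are parabelian: $\proj(\cO)$ is quasi-abelian, $\ES$ and $\HS$ are \qpa by \S\ref{sec:normed}, and $\Vect_\bR,\Vect_\bC$ are abelian. The forgetful functors $\ES\to\Vect_\bR$ and $\HS\to\Vect_\bC$ are exact, because the kernel $f\inv(0)$ and cokernel $f(X)^\perp\iso Y/f(X)$ in $\ES$ (and likewise in $\HS$) have as underlying linear map the kernel and cokernel of the underlying morphism. The base-change functors $E\mto E\ts_\si\bR$ and $E\mto E\ts_{\si_v}\bC$ are exact as well: kernels in $\proj(\cO)$ are ordinary module kernels, preserved because $\bR$ (resp.\ $\bC$) is flat over $\cO$; and the cokernel in $\proj(\cO)$ of $f:E\to E'$ is $E'$ modulo the saturation of $f(E)$, so after $\ts_\si\bR$, which annihilates the torsion module $\mathrm{sat}(f(E))/f(E)$, one obtains $(E'\ts_\si\bR)/(f(E)\ts_\si\bR)$, the cokernel of $f\ts_\si\bR$. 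Thus all arrow-functors of $\Psi$ are exact, and Theorem~\ref{Q-diagram} shows that $\Rep^\bi(\Psi)$ is \qpa.

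It then remains to produce an equivalence $\Bun(\ub{\Spec\cO})\iso\Rep^\bi(\Psi)$. An object of $\Rep^\bi(\Psi)$ consists of $E\in\proj(\cO)$, a vector space at each $c$-vertex, a Euclidean (resp.\ Hermitian) space at each $e_\si$ (resp.\ $h_v$), and isomorphisms identifying $E\ts_\si\bR$ (resp.\ $E\ts_{\si_v}\bC$) both with the $c$-space and with the underlying space of the $e_\si$-space (resp.\ $h_v$-space); transporting the inner products and Hermitian forms along these isomorphisms equips each $E\ts_\si\bR$ with a Euclidean inner product and each $E\ts_{\si_v}\bC$ with a Hermitian form, \ie — after extending by the conjugation relation — with exactly the datum $h$ of a Hermitian bundle $(E,h)$. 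A morphism of $\Rep^\bi(\Psi)$ is an $\cO$-linear $f:E\to E'$ whose base change at every archimedean place has operator norm $\le1$; since every element of $E\ts_\cO K$ has the form $y/b$ with $y\in E$, $b\in\cO\ms0$, and $E\ts_\cO K$ is dense in each completion $E\ts_\si\bR$ and $E\ts_{\si_v}\bC$, this is equivalent to $\nn{fx}_\si\le\nn x_\si$ for all $\si\in\Si$ and $x\in E$, \ie to a morphism of Hermitian bundles. This gives the desired equivalence, and the theorem follows.

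The main obstacle is the bookkeeping in the last two steps: one must be careful that the invariance relation $h_{\bar\si}(x,y)=h_\si(y,x)$ is precisely what makes the real places correspond to $\ES$-data and the complex places to $\HS$-data, so that $\Rep^\bi(\Psi)$ records neither too much nor too little, and that the "operator norm $\le1$" condition carried by $\Rep^\bi(\Psi)$ at each place matches the defining inequality $\nn{fx}_\si\le\nn x_\si$ for $x\in E$. These points are elementary but genuinely need checking; everything else is a direct appeal to Theorem~\ref{Q-diagram} together with the exactness computations above.
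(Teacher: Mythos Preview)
Your argument is correct, and it follows the same overall strategy as the paper (realise the category inside some $\Rep^\bi(\Psi)$ built from $\proj(\cO)$ and Euclidean/Hermitian data, then invoke Theorem~\ref{Q-diagram}), but the implementation differs in an interesting way.

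The paper does \emph{not} attempt to get an equivalence. It uses a coarser diagram: for every $\si\in\Si$ (real or complex) it places a copy of $\Vect_\bC$ and of $\HS$, with arrows $\proj(\cO)\to\Vect_\bC$, $E\mto E\ts_\si\bC$, and the forgetful $\HS\to\Vect_\bC$. This yields only a fully faithful exact functor $\Bun(\ub{\Spec\cO})\hookrightarrow\Rep^\bi(\Psi)$, since the conjugation constraint $h_{\bar\si}(x,y)=h_\si(y,x)$ is not encoded in $\Psi$; the conclusion then comes from Lemma~\ref{subcat} (an exact, isomorphism-reflecting functor into a \qpa category suffices). Your version instead splits $\Si$ into real embeddings and complex places, uses $\ES/\Vect_\bR$ at the former and $\HS/\Vect_\bC$ at one representative of each of the latter, and thereby obtains an honest equivalence $\Bun(\ub{\Spec\cO})\simeq\Rep^\bi(\Psi)$. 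The trade-off is clear: the paper's route is shorter and avoids the bookkeeping about real vs.\ complex places and about why one representative per conjugate pair suffices (your closing paragraph correctly flags exactly these points as the ones that need checking), at the price of invoking the extra Lemma~\ref{subcat}; your route does more archimedean sorting but delivers a sharper statement and needs nothing beyond Theorem~\ref{Q-diagram}. Your exactness checks for the base-change functors $E\mto E\ts_\si\bR$ and $E\mto E\ts_{\si_v}\bC$ and for the forgetful functors are fine and are precisely what is required.
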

\begin{proof}
Consider the diagram $\Psi$ of exact functors
\[\Psi_\si:\cA_0=\proj(\cO)\to \cA_\si=\Vect_\bC,\qquad
E\mto E\ts_\si\bC,\]
and forgetful functors $\Psi_{\si'}:\cA_{\si'}=\HS\to\cA_\si=\Vect_\bC$ for $\si\in\Si$.
Then there is a fully faithful exact functor
$F:\Bun(\ub{\Spec\cO})\to \Rep^\bi(\Psi)$ that maps $(E,h)$ to $\bar E\in\Rep^\bi(\Psi)$ with $\bar E_0=E\in\cA_0$,
$\bar E_\si=E\ts_\si\bC\in\cA_\si$ and $\bar E_{\si'}=(\bar E_\si,h_\si)\in\cA_{\si'}$.
The category $\Rep^\bi(\Psi)$ is \qpa by Theorem \ref{Q-diagram}.
Therefore the category $\Bun(\ub{\Spec\cO})$ is \qpa by Lemma~\ref{subcat}.
\end{proof}

\subsection{Closure spaces and matroids}
\label{sec:closure}
A \idef{closure space} is a set $X$ equipped with a
\idef{closure operator} $\si=\si_X:2^X\to 2^X$ satisfying
$\si^2=\si$ and $A\sbs\si(A)\sbs\si(B)$ for $A\sbs B\sbs X$.
A subset $F\sbs X$ is called closed if $\si F=F$.
The family $\cF\sbs 2^X$ of all closed sets satisfies
$\cap\cI\in\cF$ for all $\cI\sbs\cF$.
Conversely, a family $\cF\sbs 2^X$ with this property defines the closure operator $\si(A)=\cap\sets{F\in\cF}{F\sps A}$ for $A\sbs X$.

\begin{lemma}
For a map $f:X\to Y$ between closure spaces the following are equivalent
\begin{enumerate}
\item $f\inv F$ is closed for all closed $F\sbs Y$.
\item $\si f\inv B\sbs f\inv\si B$ for all $B\sbs Y$.
\item $f\si A\sbs\si f A$ for all $A\sbs X$.
\end{enumerate}
A map satisfying these conditions is called continuous.
\end{lemma}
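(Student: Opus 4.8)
The statement to prove is a standard characterization of continuous maps between closure spaces, asserting the equivalence of three conditions. Let me sketch the proof.

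The plan is to prove the cyclic chain of implications $(1)\Rightarrow(2)\Rightarrow(3)\Rightarrow(1)$, using only the defining properties of a closure operator: idempotence $\sigma^2=\sigma$ and monotonicity $A\subseteq\sigma(A)\subseteq\sigma(B)$ for $A\subseteq B$.

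\begin{proof}
\textbf{(1)$\Rightarrow$(2).} Let $B\sbs Y$. Since $\si B\sps B$ is closed in $Y$, the set $f\inv\si B$ is closed in $X$ by (1). As $f\inv\si B\sps f\inv B$, applying $\si$ and monotonicity gives $\si f\inv B\sbs\si f\inv\si B=f\inv\si B$, the last equality since $f\inv\si B$ is closed.

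\textbf{(2)$\Rightarrow$(3).} Let $A\sbs X$ and put $B=fA\sbs Y$. Then $A\sbs f\inv fA=f\inv B$, so by monotonicity and (2) we get $\si A\sbs\si f\inv B\sbs f\inv\si B=f\inv\si fA$. Applying $f$ yields $f\si A\sbs ff\inv\si fA\sbs\si fA$.

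\textbf{(3)$\Rightarrow$(1).} Let $F\sbs Y$ be closed, i.e.\ $\si F=F$, and set $A=f\inv F$. By (3), $f\si A\sbs\si fA\sbs\si F=F$, where the middle inclusion uses $fA=ff\inv F\sbs F$ and monotonicity. Hence $\si A\sbs f\inv F=A$, and since also $A\sbs\si A$, we conclude $\si A=A$, so $f\inv F$ is closed.
\end{proof}

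The argument is entirely formal, the only mild subtlety being the repeated use of the adjunction-style inclusions $A\sbs f\inv fA$ and $ff\inv B\sbs B$ for an arbitrary map $f$ together with monotonicity of $\si$; no genuine obstacle arises. I would present it exactly as the three short implications above, with no need to invoke the description of closed sets as intersection-closed families, since the operator formulation suffices.
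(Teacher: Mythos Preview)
Your proof is correct and follows essentially the same approach as the paper: both prove the cyclic chain $(1)\Rightarrow(2)\Rightarrow(3)\Rightarrow(1)$ using the same auxiliary choices ($F=\si B$ for the first implication, $B=fA$ for the second, $A=f\inv F$ for the third) and the same monotonicity arguments. Your write-up is only slightly more verbose in spelling out the adjunction-style inclusions $A\sbs f\inv fA$ and $ff\inv B\sbs B$, which the paper leaves implicit.
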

\begin{proof}
\dir12 For $F=\si B$, we have $\si f\inv(B)\sbs\si f\inv F=f\inv F=f\inv\si B$.
\dir23 For $B=f A$, we have $\si A\sbs \si f\inv B\sbs f\inv\si B=f\inv \si fA$, hence $f\si A\sbs\si f A$.
\dir31
For $A=f\inv F$, we have $\si A\sbs f\inv\si f A\sbs f\inv\si F=f\inv F=A$, hence $A$ is closed.
\end{proof}

\begin{remark}
Similarly, a map $f:X\to Y$ between closure spaces is closed ($fF$ is closed for all closed $F\sbs X$) \iff
$\si fA\sbs f\si A$ for all $A\sbs X$.
Indeed, if $f$ is closed, then for $A\sbs X$ and $F=\si A$ we have $\si f A\sbs \si fF=fF=f\si A$.
Conversely, if $\si fA\sbs f\si A$ for all $A\sbs X$,
then for a closed $F\sbs X$ we have $\si fF\sbs f\si F=fF$, hence $fF$ is closed.
A closed continuous map satisfies $f\si=\si f$.
\end{remark}

\begin{example}
For a topological space $X$, let $\si A\sbs X$ be the closure of $A\sbs X$.
Then $\si:2^X\to 2^X$ is a closure operator.
It defines the topology on $X$ uniquely.
\end{example}

\begin{example}
Let $(X,\cI)$ be a matroid, where $X$ is a finite set and $\cI\sbs 2^X$ is a collection of independent sets \cite{welsh_matroid}.
One defines the rank function
\[\rho:2^X\to\bZ,\qquad A\mto\max\sets{\n I}{I\sbs A,\, I\in\cI}\]
and the closure operator
\[\si:2^X\to 2^X,\qquad A\mto \sets{x\in X}{\rho(A\cup x)=\rho(A)}.\]
A subset $F\sbs X$ is closed (also called a flat)
\iff $\rho(F\cup x)>\rho(F)$ for all $x\in X\ms F$.
The above map $\si$ has the properties
\begin{enumerate}
\item $(X,\si)$ is a closure space.
\item If $y\in\si(A\cup x)\ms \si A$, then $x\in\si(A\cup y)$ for $A\sbs X$ and $x\in X$.
\end{enumerate}
Conversely, a map $\si$ with these properties defines a unique matroid structure on $X$.
A continuous map between matroids (considered as closure spaces) is called a strong map \cite[\S17]{welsh_matroid}.
\end{example}

Let $\Cl$ denote the category of closure spaces and continuous maps between them.
The forgetful functor $U:\Cl\to\Set$ has a fully faithful right adjoint functor $R:\Set\to\Cl$, where $RX=X$ has the closure operator $\si A=X$ for all $A\sbs X$.
The category $\Cl$ has the terminal object $R\set0=\set0$ with $\si\es=\set0$.
Let $\Cl_*=\set0/\!\Cl$ be the comma category, with morphisms $\set0\to X$ in $\Cl$ as the objects.
These objects can be identified with pointed closure sets $(X,0_X)$, where $X\in\Cl$ and $0_X\in\si\es\sbs X$ (the base point).
The pointed closure set $(\set0,0)$ is the zero object of $\Cl_*$, denoted by $0$.
Let $\Mat\sbs\Cl$ denote the full subcategory of matroids and $\Mat_*\sbs\Cl_*$ denote the corresponding pointed category.
The functor $R:\Set\to\Cl$ factors through $\Mat$.

\begin{lemma}
For every map $f:X\to Y$ in $\Cl_*$ there exists
\begin{enumerate}
\item
the kernel $i:K=f\inv(0_Y)\emb X$,
where $K$ has the closure operator
$\si_K=i\inv \si_X i$,
meaning that $\si_K(A)=K\cap \si_X(A)$ for $A\sbs K$.
\item the cokernel $\pi:Y\to C=Y/f(X)$, where $C$ has the closure operator $\si_C=\pi\si_Y\pi\inv$,
meaning that $F\sbs C$ is closed \iff $\pi\inv F$ is closed.
\end{enumerate}
\end{lemma}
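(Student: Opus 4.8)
The plan is to construct $K$ and $C$ explicitly as pointed closure spaces and to verify the relevant universal properties, working throughout with families of closed sets rather than with the closure operators directly: recall that a map is continuous exactly when preimages of closed sets are closed, and that a closure structure is determined by its family of closed sets, which may be any family stable under arbitrary intersection.

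For the kernel I would take $K=f\inv(0_Y)\subseteq X$, which contains $0_X$ since $f$ is pointed. First one checks that $\si_K(A):=K\cap\si_X(A)$ is a closure operator on $K$: monotonicity is immediate, and idempotence follows from $\si_X\bigl(K\cap\si_X A\bigr)\subseteq\si_X\si_X A=\si_X A$. Equivalently, the closed subsets of $K$ are precisely the sets $F\cap K$ with $F$ closed in $X$; from this the inclusion $i:K\emb X$ is continuous, and $0_X\in K\cap\si_X(\es)=\si_K(\es)$, so $(K,\si_K)\in\Cl_*$. Since $i$ lands in $f\inv(0_Y)$ we have $fi=0$. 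For the universal property of $\ker f$ (the equalizer of $f$ and $0$): if $g:U\to X$ in $\Cl_*$ satisfies $fg=0$, then $g(U)\subseteq K$, so $g$ corestricts to a unique set map $\bar g:U\to K$ with $i\bar g=g$, and for $F$ closed in $X$ we get $\bar g\inv(F\cap K)=(i\bar g)\inv(F)=g\inv(F)$, which is closed in $U$; hence $\bar g$ is continuous and $i=\ker f$.

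For the cokernel I would take $C=Y/f(X)$ with the quotient map $\pi:Y\to C$, exactly as in $\Set_*$ (note $f(X)$ is nonempty, since $0_Y\in f(X)$, and its class is the base point of $C$). Equip $C$ with the quotient closure structure by declaring $F\subseteq C$ closed iff $\pi\inv(F)$ is closed in $Y$; since $\pi\inv\bigl(\bigcap_i F_i\bigr)=\bigcap_i\pi\inv(F_i)$ and the closed sets of $Y$ are stable under intersection, this family is stable under intersection and so defines a closure operator $\si_C(B)=\bigcap\{F\supseteq B\mid\pi\inv F\text{ closed in }Y\}$ on $C$. Every nonempty closed $F\subseteq C$ has $\pi\inv(F)\supseteq\si_Y(\es)\ni 0_Y$, hence contains $0_C=\pi(0_Y)$, so $0_C\in\si_C(\es)$ and $(C,\si_C)\in\Cl_*$; and $\pi$ is continuous with $\pi f=0$ by construction. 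For the universal property of $\coker f$ (the coequalizer of $f$ and $0$): if $g:Y\to U$ in $\Cl_*$ satisfies $gf=0$, then $g$ is constantly $0_U$ on $f(X)$, so it factors through a unique set map $\tl g:C\to U$ with $\tl g\pi=g$, and for $G$ closed in $U$ we get $\pi\inv\bigl((\tl g)\inv(G)\bigr)=g\inv(G)$ closed in $Y$, whence $(\tl g)\inv(G)$ is closed in $C$; hence $\tl g$ is continuous and $\pi=\coker f$.

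The one genuinely delicate point is the choice of closure structure on the quotient: one must use the ``preimage of closed is closed'' description (equivalently, the finest structure making $\pi$ continuous) and verify that it really is a closure space. Note that $B\mapsto\pi\si_Y\pi\inv(B)$ itself need not be idempotent, but its fixed sets are exactly the $F$ with $\pi\inv F$ closed, and it is this family of closed sets that is used. Everything else — that $K$ and $C$ are pointed, that $fi=0$ and $\pi f=0$, continuity of $i$, $\pi$, $\bar g$, $\tl g$, and uniqueness of the factorizations (from $i$ injective and $\pi$ surjective) — reduces to the bookkeeping above once continuity is uniformly expressed in terms of closed sets.
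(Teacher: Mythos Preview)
Your proof is correct and follows the same direct-verification strategy as the paper, though you phrase continuity via closed sets while the paper uses the equivalent condition $b\,\si_U(A)\subseteq\si_K\,b(A)$ and only writes out the kernel case. Your remark that $B\mapsto\pi\si_Y\pi^{-1}(B)$ need not be idempotent is correct and worth noting: the paper's formula is to be read through its ``meaning that'' clause, i.e., $\si_C$ is the closure operator whose closed sets are precisely the $F$ with $\pi^{-1}F$ closed in $Y$.
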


\begin{proof}
We will show that $i$ is the kernel.
If $a:U\to X$ satisfies $fa=0$, then there is a unique morphism of pointed sets $b:U\to K$ with $ib=a$.
For $A\sbs U$, we have $ib\si_U(A)\sbs \si_X ib(A)$,
hence $b\si_U(A)\sbs\si_K b(A)$ and $b$ is continuous.
\end{proof}

We conclude by Lemma \ref{pull/push} that strict monomorphisms are closed under pullbacks along $\Cl_*$ and strict epimorphisms are closed under pushouts along $\Cl_*$.
The above lemma implies that the forgetful functor $U:\Cl_*\to\Set_*$ preserves kernels and cokernels, hence also preserves
pullbacks of strict monomorphisms and pushouts of strict epimorphisms.

\begin{theorem}
The category $\Cl_*$ is \qpa.
\end{theorem}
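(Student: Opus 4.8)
The plan is to verify the two axioms defining a \qpa category directly, using the explicit description of kernels and cokernels in $\Cl_*$ together with the corresponding facts in $\Set_*$. We already know that $\Set_*$ is \qpa and that the forgetful functor $U:\Cl_*\to\Set_*$ preserves kernels, cokernels, pullbacks of strict monomorphisms and pushouts of strict epimorphisms. So the heart of the matter is to understand when a morphism in $\Cl_*$ is a strict mono (resp.\ strict epi) in terms of its underlying map: by the formulas for $\Ker$, $\Coker$, $\Coim$, $\Im$ one sees that $f:X\to Y$ is strict mono iff $f$ is injective on $\dom(f)=X\ms f\inv(0_Y)$ and $X$ carries exactly the induced closure operator $\si_X=f\inv\si_Y f$ on $\dom(f)$ extended to $X$ in the evident way; dually $f$ is strict epi iff $f$ is surjective onto $Y\ms 0_Y$ and $Y$ carries the quotient closure operator.

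First I would record these characterizations of strict mono and strict epi in $\Cl_*$ (a short computation from the kernel/cokernel lemma just proved). Then, given a cartesian square as in \eqref{sq1} with $f$ a strict mono and $g$ a strict epi, I apply the fact that $\Set_*$ is \qpa to conclude that the underlying map of $g'$ is a strict epi of pointed sets, i.e.\ $g'$ is surjective onto $X\ms 0$; what remains is to identify the closure operator on $X'$. Since pullbacks of strict monos and pushouts of strict epis in $\Cl_*$ are computed by kernels and cokernels (Lemma \ref{pull/push}), the object $X'$ is $\ker(\coker(f)\circ g)$, so its closure operator is the one induced from $Y'$ along the inclusion $X'\emb Y'$; I then check that this coincides with the quotient closure operator making $g':X'\to X$ a strict epi. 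Dually, for a cocartesian square with $f'$ strict mono and $g'$ strict epi, the underlying map of $f$ is a strict mono of pointed sets, and I must check that the closure operator on $X$ (which is the image $\ker(\coker f)$, hence carries the closure induced from $Y$) is exactly the one induced from $X'$, making $f$ a strict mono in $\Cl_*$.

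The key computational input is the compatibility of the ``induced'' and ``quotient'' closure operators across the bicartesian square. Concretely: given $x\in X$, identify $X$ with $f(X)\sbs Y$ and $X'$ with $(g')\inv$ of it, i.e.\ with $g\inv(f(X))\sbs Y'$; then a subset $A\sbs X$ is closed in the quotient-from-$X'$ sense iff $(g')\inv A$ is closed in $X'$, which (since $X'$ has the induced closure from $Y'$) happens iff $(g')\inv A = X'\cap F$ for some closed $F\sbs Y'$; and because $g$ is a strict epi, closed subsets of $Y$ pull back to and push forward from closed subsets of $Y'$ appropriately, which lets one match this with $A$ being closed in the induced-from-$Y$ closure on $X=f(X)$. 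The same bookkeeping, read in the opposite direction, handles the pushout case.

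I expect the main obstacle to be precisely this last diagram chase with closure operators on the bicartesian square: one has to be careful that the square is bicartesian on underlying pointed sets (which follows from Lemma \ref{lm bicart} once the underlying-set statements are in hand) and then track how ``$\si F = F$'' transports along the four maps, using continuity of $g$ in one direction and the fact that $g$ is strict epi (so $Y$ has the quotient closure from $Y'$) in the other. Everything else is either a direct appeal to $\Set_*$ being \qpa or an unwinding of the already-proven kernel/cokernel formulas in $\Cl_*$.
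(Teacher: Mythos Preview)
Your approach is essentially the paper's: both identify $X$ with $f(X)\sbs Y$ and $X'$ with $g\inv(X)\sbs Y'$, then check directly that the subspace closure on $X'$ (from $f'$) pushes down along $g'$ to the subspace closure on $X$ (from $f$), using that $g$ is a strict epi so $\si_Y=g\si_{Y'}g\inv$; the paper writes this as the one-line identity $g'\si_{X'}(g')\inv A=X\cap g\si_{Y'}g\inv A=\si_X A$, and declares the cocartesian case similar. One small correction to your preliminary characterization: a strict mono in $\Cl_*$ must be genuinely injective (a kernel in $\Set_*$ is an honest inclusion), not merely injective on $\dom(f)$, which is the weaker condition of being \emph{strict}; you use the correct statement later when you identify $X$ with $f(X)$, so this does not affect the argument.
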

\begin{proof}
Given a cartesian square
\[\begin{tikzcd}
X'\rar["f'"]\dar["g'"']&Y'\dar["g"]\\
X\rar["f"]&Y
\end{tikzcd}\]
with strict mono $f$ and strict epi $g$,
we need to show that $g'$ is strict epi.
The map $g'$ is a strict epimorphism of pointed sets,
hence we only need to show that $\si_X=g'\si_{X'}(g')\inv$.
We can identify $X$ with $f(X)$ and $X'$ with $g\inv(X)$.
For $B\sbs X'$, we have $\si_{X'}B=X'\cap\si_{Y'}B$ (since $f'$ is a strict mono).
For $A\sbs X$ and $B=(g')\inv A=g\inv A$,
we have $g'\si_{X'}B=g'(X'\cap\si_{Y'}B)=g(g\inv X\cap\si_{Y'}B)=X\cap g\si_{Y'}B=X\cap g\si_{Y'}g\inv A
=X\cap\si_Y A=\si_XA$.

The proof for cocartesian diagrams is similar.
\end{proof}

\begin{lemma}
The full subcategory $\Mat_*\sbs\Cl_*$ is closed under taking kernels and cokernels.
Therefore the category $\Mat_*$ is \qpa.
\end{lemma}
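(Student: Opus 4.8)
The plan is to invoke Corollary \ref{parab subcat}, so the entire task reduces to verifying that the class of matroids is closed under the kernel and cokernel constructions described in the previous lemma for $\Cl_*$. Concretely, I must show: if $f:X\to Y$ is a morphism in $\Cl_*$ with $X,Y$ matroids, then the closure space $K=f\inv(0_Y)$ with $\si_K=i\inv\si_X i$ is a matroid, and the closure space $C=Y/f(X)$ with $\si_C=\pi\si_Y\pi\inv$ is a matroid. Since we already know both are closure spaces, the only thing to check in each case is the \textbf{matroid exchange axiom}: if $y\in\si(A\cup x)\ms\si A$ then $x\in\si(A\cup y)$, for $A$ a subset of the space and $x$ an element of it.

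First I would handle the kernel. Here $K\sbs X$ and $\si_K(A)=K\cap\si_X(A)$ for $A\sbs K$. Take $A\sbs K$ and $x,y\in K$ with $y\in\si_K(A\cup x)\ms\si_K(A)$. Since $y\in K$, the condition $y\in\si_K(A\cup x)=K\cap\si_X(A\cup x)$ just says $y\in\si_X(A\cup x)$; and $y\notin\si_K(A)$ together with $y\in K$ says $y\notin\si_X(A)$. So $y\in\si_X(A\cup x)\ms\si_X(A)$, and by the exchange axiom for the matroid $X$ we get $x\in\si_X(A\cup y)$. As $x\in K$, this gives $x\in K\cap\si_X(A\cup y)=\si_K(A\cup y)$, as required. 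This step is essentially immediate because $K$ carries the restricted closure operator. (One should also note $K$ is finite, being a subset of the finite set $X$.)

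The cokernel case is the substantive one and is where I expect the main obstacle. Write $Z=f(X)\sbs Y$ and $C=Y/Z\iso(Y\ms Z)_*$, with $\pi:Y\onto C$ the quotient (collapsing $Z$ to the base point $0_C$); the closure operator on $C$ is characterized by: $F\sbs C$ is closed iff $\pi\inv F\sbs Y$ is closed. Equivalently, for $A\sbs C$, $\si_C(A)=\pi(\si_Y(\pi\inv A))$ — I would first confirm this explicit formula from the "closed iff preimage closed" description, using that $\pi\inv$ of a closed set always contains $Z$ (since the base point $0_C\in\si_C(\es)$ forces $Z\sbs\pi\inv(\text{any closed set})$, because $\pi\inv$ of a closed set is closed and $\si_Y(\es)\spe Z\cup 0_Y$... here I would have to be slightly careful: $\pi\inv(F)$ for $F$ closed in $C$ is a closed subset of $Y$ containing $Z$). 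Then, to verify exchange, take $A\sbs C$ and $\bar x,\bar y\in C$ with $\bar y\in\si_C(A\cup\bar x)\ms\si_C(A)$. Lift to $A'=\pi\inv A\spe Z$ and pick lifts $x,y$ of $\bar x,\bar y$ with $y\notin Z$ (possible since $\bar y\ne 0_C$, else $\bar y\in\si_C(A)$ trivially — actually if $\bar y=0_C$ then $\bar y\in\si_C(\es)\sbs\si_C(A)$, contradiction, so indeed $\bar y\ne 0_C$ and $y\notin Z$). The hypothesis translates to $y\in\si_Y(A'\cup x)$ but $y\notin\si_Y(A')$ — the second because $y\notin Z$ and $\bar y\notin\si_C(A)$ means $y\notin\pi\inv\si_C(A)=\si_Y(A')$ (using that $\si_Y(A')$ is the preimage of a closed set hence itself closed and $Z$-saturated, so equals $\pi\inv\pi\si_Y(A')$). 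Now apply exchange in $Y$ to get $x\in\si_Y(A'\cup y)$. If $\bar x=0_C$ then $x\in Z$, and then $\bar x=0_C\in\si_C(A\cup\bar y)$ automatically. If $\bar x\ne 0_C$, push forward: $\bar x=\pi(x)\in\pi\si_Y(A'\cup y)=\si_C(\pi(A'\cup y))=\si_C(A\cup\bar y)$. Either way $\bar x\in\si_C(A\cup\bar y)$, which is the exchange axiom for $C$. I would close by remarking $C$ is finite as a quotient of the finite set $Y$, so $(C,\si_C)$ is a matroid; then $\Mat_*\sbs\Cl_*$ is closed under kernels and cokernels and Corollary \ref{parab subcat} (together with the already-established fact that $\Cl_*$ is \qpa) yields that $\Mat_*$ is \qpa.

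The one genuine subtlety to get right is the bookkeeping about $Z$-saturation: the equalities $\pi\inv\si_C(B)=\si_Y(\pi\inv B)$ and $\pi\si_Y(A')=\si_C(\pi A')$ for $A'=\pi\inv A$ rely on every closed subset of $C$ pulling back to a closed, $Z$-containing subset of $Y$, and conversely on $\si_Y(A')$ being $Z$-saturated whenever $A'\spe Z$ (which holds because $\si_Y(A')\spe\si_Y(Z)$, and if $Z$ is itself not closed one still has $\si_Y(A')$ closed in $Y$ containing $\si_Y(Z)\spe Z$, hence it is a union of $\pi$-fibers over closed points plus the collapsed class). Once this translation dictionary is in place, the exchange axiom transports back and forth mechanically, so I do not anticipate further difficulty.
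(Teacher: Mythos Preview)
Your argument is correct and matches the paper's: verify the exchange axiom for $K$ (immediate from restriction) and for $C$ by lifting along $\pi$, applying exchange in $Y$, and pushing back down. The saturation bookkeeping you flag is lighter than you fear---$\si_C=\pi\si_Y\pi\inv$ is the \emph{definition} from the preceding lemma, and beyond that you only need that $\pi$ is injective off $Z$ (so $\pi y\in\pi S$ with $\pi y\ne0_C$ forces $y\in S$); your tentative claim $\si_Y(\es)\spe Z$ is false in general but also unnecessary, since any closed $F\sbs C$ contains $0_C$ and hence $\pi\inv F\spe\pi\inv(0_C)=Z$ directly.
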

\begin{proof}
Let $f:X\to Y$ be a morphism in $\Mat_*$.
Let $K=f\inv(0_Y)$ be equipped with the closure operator $\si_KA=K\cap\si_XA$ for $A\sbs K$.
To prove that $(K,\si_K)$ is a matroid, we need to show
that $y\in\si_K(A\cup x)\ms \si_K A$ implies $x\in\si_K(A\cup y)$ for $A\sbs K$ and $x,y\in K$.
This follows from the same property for $\si_X$.

Let $C=Y/X'$, for $X'=f(X)$, be equipped with the closure operator $\si_C=\pi\si_Y\pi\inv$, where $\pi:Y\to C$ is the projection.
If $\pi y\in\si_C(A\cup \pi x)\ms\si_C A$ for $A\sbs C$ and $x\in Y$, then $\pi y\ne0$ (since $0\in\si_CA$).
If $\pi x=0$, then $\pi x\in\si_C(A\cup\pi y)$, hence we can assume that $\pi x\ne0$.
For $B=\pi\inv A$, we have $\pi y\in\pi\si_Y(B\cup x)$, hence $y\in\si_Y(B\cup x)$.
We have $\pi y\notin\si_CA=\pi\si_Y B$, hence $y\notin\si_YB$.
As $(Y,\si_Y)$ is a matroid, we conclude that $x\in\si_Y(B\cup y)
=\si_Y\pi\inv(A\cup \pi y)$, hence $\pi x\in\si_C(A\cup\pi y)$.
This implies that $(C,\si_C)$ is a matroid.
\end{proof}

\begin{remark}
The above results imply that the categories $\Cl_*$ and $\Mat_*$ are equipped with the canonical \pe structure
consisting of all short exact sequences.
An explicit proto-exact structure on the category $\Mat_*$ was constructed in \cite{eppolito_proto}.
It coincides with the canonical one.
\end{remark}

\section{Finitary algebraic categories}

\subsection{Finitary monads}
\label{fini}
Varieties of algebraic structures in universal algebra can be encoded using Lawvere theories~\cite{lawvere_functorial} or using \idef{finitary monads} on $\Set$ (monads that preserve filtered colimits)
\cite{linton_some,hyland_category}.
For a finitary monad $T$ on $\Set$, the Eilenberg-Moore category $\Set^T$ of $T$-algebras is called a \idef{finitary algebraic category}.
Our interest in such categories is motivated by \dur, where finitary monads on $\Set$ are called \idef{algebraic monads} and algebras over them are interpreted as modules over \qq{generalized rings}.
Many categories in Arakelov geometry arise in this way
\cite{durov_new,fresan_compactification}.
For example, the categories of pointed sets and normed spaces (with morphisms of norm $\le1$)
are finitary algebraic categories or their subcategories.
In this section we will study the question of when such categories are
\qpa.

Recall that a functor $U:\cA\to\cB$ is called \idef{monadic} if it has a left adjoint $F:\cB\to\cA$ such that for the monad $T=UF:\cB\to\cB$,
the canonical functor $\bar U:\cA\to\cB^T$, $X\mto UX$, is an equivalence.
By Beck's theorem (see \eg \cite{maclane_categories}),
if $\cA$ has all coequalizers
and $U$ has a left adjoint,
preserves coequalizers
and reflects isomorphisms, then $U$ is monadic.
If $\cB$ is a category with all limits and colimits and $T$ is a monad on $\cB$ that preserves filtered colimits, then $\cB^T$ has all limits and colimits \cite{barr_toposes}.
For example, let $A$ be a ring (or a semiring \S\ref{sec:cmon})
and $\Mod(A)$ be the category of left modules over~$A$.
The forgetful functor $U:\Mod(A)\to\Set$ has the left adjoint
\begin{equation}\label{ring left adj}
F:\Set\to\Mod(A),\qquad X\mto AX=\coprod\nolimits_{x\in X}A.
\end{equation}
The monad
\begin{equation}\label{ring monad}
\bar A=UF:\Set\to\Set
\end{equation}
will be sometimes denoted by $A$.
The canonical functor $\bar U:\Mod(A)\to\Set^{\AA}$ is an equivalence.

Let $T$ be a finitary monad on the category $\Set$
(throughout, all finitary monads will be considered on \Set).
Let $\NN\sbs\Set$ be the full subcategory with objects $\bn=\set{1,\dots,n}$ for $n\ge0$ (equivalent to the category of finite sets).
We denote $T(\bn)$ by $T(n)$.
The functor $T:\Set\to\Set$ is uniquely determined by its restriction to $\NN$ \dur.
Namely, $TX=\dlim_{[\bn\to X]\in\NN/X}T(\bn)$ for $X\in\Set$.
Given a morphism $\al:TX\to Y$, for every $x\in X^n$ (corresponding to $x:\bn\to X$), we have a map $T(\bn)\xto{Tx} T(X)\xto\al Y$, hence
\[\al_n:T(n)\xx X^n\to Y,\qquad n\ge0.\]
For $t\in T(n)$ and $x\in X^n$, we denote $\al_n(t,x)$ by $t(x)$.
These maps should satisfy
\[[T(\vi)(t)](x)=t(\vi^* x),\qquad \vi:\bm\to\bn,\quad
t\in T(m),\quad x\in X^n,\quad \vi^*x=(x_{\vi i})_{i\in\bm}.\]

In particular, the product map $\mu:T^2\to T$ of the monad induces
\[\mu_m=\mu_m^{(n)}:T(m)\xx T(n)^m\to T(n),\qquad m,n\ge0.\]
We also have the unit map $\eta:\bn\to T(\bn)$
and define $e_i=e_i^{(n)}=\eta(i)\in T(n)$ for $i\in\bn$.
Note that for $T=\bar A$ from \eqref{ring monad}, the elements $e_i\in A^n$ are the standard basis elements.
Let $1=e_1\in T(1)$.
The axioms of a monad are \dur[\S4.3.3]
\begin{enumerate}
\item $1(t)=t$ for $t\in T(n)$.
\item $t(e_1,\dots,e_n)=t$ for $t\in T(n)$.
\item $s(t(x))=(s(t))(x)$ for $s\in T(m)$, $t\in T(n)^m$ and $x\in T(k)^n$, where $t(x)=(t_i(x))_{i\in\bm}$.
\end{enumerate}

For a $T$-algebra $X$ with the structure map $\al:TX\to X$,
we have the maps $\al_n:T(n)\xx X^n\to X$ and define $t(x)=\al_n(t,x)$ for $t\in T(n)$, $x\in X^n$ as before.
The axioms of an algebra are
\begin{enumerate}
\item $1(x)=x$ for $x\in X$.
This also implies $e_i(x)=x_i$ for $x\in X^n$ and $i\in\bn$.
\item $s(t(x))=(s(t))(x)$ for $s\in T(m)$, $t\in T(n)^m$ and
$x\in X^n$, where $t(x)=(t_i(x))_{i\in\bm}$.
\end{enumerate}

\begin{lemma}\label{lm:submon}
Let $X$ be an algebra over a finitary monad $T$ and $S\sbs X$ be a subset.
Then there is a finitary submonad $T_S\sbs T$ (stabilizer of $S$) defined by
\[T_S(n)=\sets{t\in T(n)}{t(S^n)\sbs S},\qquad n\ge0.\]
\end{lemma}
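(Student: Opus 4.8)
The plan is to verify that the subsets $T_S(n)\sbs T(n)$ assemble, via the structure theory of finitary monads recalled above (\dur), into a submonad of $T$. Concretely, it suffices to check that the $T_S(n)$ form a subfunctor of $T|_{\NN}$ that is closed under the unit $\eta$ and the multiplication $\mu$ of $T$: since every monad axiom in its element-wise form is an identity between specific elements of the $T(k)$, all of them restrict automatically once the relevant structure maps are seen to take values in $T_S$, and the functor extended to $\Set$ by $T_SX=\dlim_{[\bn\to X]\in\NN/X}T_S(n)$ is finitary by construction.

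First I would check that $T_S$ is a subfunctor. For $\vi:\bm\to\bn$ and $t\in T_S(m)$, and for any $x\in S^n$, we have $\vi^*x=(x_{\vi i})_{i\in\bm}\in S^m$, so $[T(\vi)(t)](x)=t(\vi^*x)\in S$; hence $T(\vi)(t)\in T_S(n)$. Next, each basis element $e_i^{(n)}$ lies in $T_S(n)$, because $e_i^{(n)}(x)=x_i\in S$ for every $x\in S^n$ by the first algebra axiom, so the unit factors through $T_S$. Finally, for $s\in T_S(m)$ and $t=(t_1,\dots,t_m)\in T_S(n)^m$, and for $x\in S^n$, each $t_i(x)$ lies in $S$, whence $t(x)=(t_i(x))_{i\in\bm}\in S^m$ and so $(s(t))(x)=s(t(x))\in S$ by the second algebra axiom together with $s\in T_S(m)$. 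Thus $\mu_m^{(n)}(s,t)=s(t)\in T_S(n)$, i.e.\ $\mu$ restricts to $T_S$.

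With these three points in hand, $T_S$ inherits the monad axioms from $T$ and is visibly finitary; since filtered colimits of injections of sets are injective, the inclusions $T_S(n)\emb T(n)$ globalize to $T_SX\emb TX$, so that $T_S\emb T$ is a genuine submonad. None of this is deep; the only spot that calls for attention is the multiplication step, where one must carefully distinguish the tuple $t\in T_S(n)^m$ from the associated map $x\mapsto(t_i(x))_{i\in\bm}$ when invoking the associativity axiom $(s(t))(x)=s(t(x))$.
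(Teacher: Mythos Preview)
Your proof is correct and follows the same approach as the paper: the paper's proof checks only the multiplication step (your third paragraph), leaving the subfunctor and unit verifications implicit, while you spell all three out. The extra care is harmless and the core argument is identical.
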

\begin{proof}
For $s\in T_S(m)$ and $t\in T_S(n)^m$, we need to show that $u=s(t)\in T(n)$ is contained in $T_S(n)$.
For $x\in S^n$, we have $u(x)=s(t(x))$.
As $t\in T_S(n)^m$, we obtain $t(x)\in S^m$,
hence $s(t(x))\in S$.
This implies that $u(S^n)\sbs S$, hence $u\in T_S(n)$.
\end{proof}

\begin{remark}
The set $\n T=T(1)$ has the monoid structure $s\cdot t=s(t)$ for $s,t\in T(1)$, with the identity $1\in T(1)$.
It is called the \idef{underlying monoid} of $T$.
If $S\sbs T(1)$ is a submonoid,
then $T_S(1)=S$, hence the underlying monoid of $T_S$ is $S$.
\end{remark}

Since $T$ preserves filtered colimits, the finitary algebraic category $\Set^T$ has all limits and colimits.
The limits are constructed  in the category of sets.
Let us also describe the construction of coequalizers \dur.
For $X\in\Set^T$, an equivalence relation $R\sbs X\xx X$ is said to
be $T$-compatible
if $X/R$ admits (a unique) $T$-algebra structure such that the projection map $\pi:X\to X/R$ is a morphism of $T$-algebras.
This means that if $x,y\in X^n$ satisfy $x_i\sim_R y_i$ for all $i\in\bn$, then
$t(x)\sim_R t(y)$ for all $t\in T(n)$.
The intersection of $T$-compatible equivalence relations on $X$ is $T$-compatible \dur[\S4.6], hence for every subset $R\sbs X\xx X$ there exists the  minimal $T$-compatible equivalence relation
$\ang R=\ang R_X\sbs X\xx X$ that contains $R$, and we have the $T$-algebra $X/\ang R$.
In particular, the coequalizer of two morphisms $f,g:X\to Y$ is defined to be $Y/\ang R$ for $R=\sets{(f(x),g(x))}{x\in X}\sbs Y\xx Y$.

Given a homomorphism $\rho:S\to T$ of algebraic monads,
there is the scalar restriction functor $\rho^*:\Set^{T}\to\Set^S$,
which admits the left adjoint $\rho_*:\Set^S\to\Set^T$, $X\mto X_T=T\ts_{S}X$, called the scalar extension functor \dur[\S4.6.19].

The category $\Set^T$ has the terminal object $\set0$ and the initial object $T(0)=T(\es)$.
We say that~$T$ is \idef{pointed} (or a \idef{monad with zero} \dur) if $T(0)=\set0$.
In this case, the category $\Set^T$ has the zero object $T(0)=\set0$ and admits kernels and cokernels.
For a $T$-algebra $X$, the structure map $\al_0:T(0)=\set0\to X$
makes $X$ a pointed set with the base point $0_X=\al_0(0)\in X$.
Given a $T$-subalgebra $X\sbs Y$, we define the quotient algebra
$Y/X=Y/\ang{X\xx X}$, where $\ang{X\xx X}=\ang{X\xx 0_X}\sbs Y\xx Y$ is the $T$-compatible equivalence relation on $Y$ generated by $X\xx X$.
Define the \idef{saturation} of $X$ to be $\ang X=\pi\inv(0)$ for the projection map $\pi:Y\to Y/X$.
It is a $T$-algebra containing~$X$.
A subalgebra $X\sbs Y$ is called \idef{saturated} if $X=\ang X$,
meaning that $\ang{X\xx X}\cap(X\xx Y)=X\xx X$.
For a morphism $f:X\to Y$ of $T$-algebras, we have
\begin{equation}\label{ker-coker-T}
\begin{gathered}
\Ker(f)=f\inv(0_Y),\quad
\Coker(f)=Y/f(X),\\
\Coim(f)=X/f\inv(0_Y),\quad \Im(f)=\ang{f(X)}.
\end{gathered}
\end{equation}
The maps $\Coim(f)\onto f(X)\emb\Im(f)$ are not necessarily isomorphisms (see Example \ref{ex:cmon Im/Coim}).
Let us say that a morphism of $T$-algebras is injective/surjective if this is the case for the underlying map between sets.
Then
\[\text{strict mono/epi} \imp\text{injective/surjective} \imp\text{mono/epi}.\]
A morphism $f:X\to Y$ is strict mono \iff $f$ is injective and $f(X)$ is saturated in $Y$.

\begin{example}
The monad $\bF_1$ from \S\ref{sec-pointed} is finitary and the category $\Set^{\bF_1}\iso\Set_*$ is \qpa.
\end{example}

\begin{example}\label{ex-cmon}
Let $\CMon$ be the category of commutative monoids (modules over the semiring~$\bN$).
The forgetful functor $U:\CMon\to\Set$ is monadic, with the left adjoint $F:\Set\to\CMon$, $X\mto\bN X=\coprod_{x\in X}\bN$.
The monad $\bN=UF:\Set\to\Set$ is finitary and pointed, and $\CMon\iso\Set^\bN$.
We will see later that the category $\CMon$ is \qpa.
\end{example}

\begin{example}
Let $\Grp$ be the category of groups.
The forgetful functor $U:\Grp\to\Set$ is monadic, with the left adjoint $F:\Set\to\Grp$ being the free group functor.
The monad $T=UF$ is finitary and pointed,
and the category $\Grp\iso\Set^T$ is pointed
and has all limits and colimits.
A morphism $f:X\to Y$ is a strict mono \iff it is an embedding of a normal subgroup.
The composition of strict mono is not necessarily strict mono.
Therefore the category $\Grp$ is not \qpa.
\end{example}

\begin{example}
\label{Zinf-1}
Let $\Norm_1$ be the category of all normed $\bR$-vector spaces with linear maps of norm $\le1$ as morphisms (\cf \S\ref{sec:normed}).
The functor
\[U:\Norm_1\to\Set,\qquad E\mto E_{\le1}=\sets{x\in E}{\nn x\le1},\]
has the left adjoint $F:\Set\to\Norm_1$, $X\mto(\bR X,\nn\blank_1)$
and we obtain the finitary monad
\[\ZZ=UF:\Set\to\Set,\qquad X\mto \sets{t\in\bR X}{\nn t_1\le1}.\]
The canonical functor $\bar U:\Norm_1\to\Mod(\ZZ)=\Set^{\ZZ}$ is fully faithful, but not an equivalence \cite{semadeni_monads,pumpluen_banach,boerger_cogenerators,durov_new}.
The algebras over the monad $\ZZ$ are called
finitely totally convex spaces~\cite{pumpluen_banach},
absolutely convex spaces~\cite{boerger_cogenerators}
or $\ZZ$-modules \dur (see also Example \ref{Zinf-2}).
We have seen that $\NS\sbs\Mod(\ZZ)$ is \qpa and we will prove later that so is $\Mod(\ZZ)$.

For $X\in\Mod(\ZZ)$, $t\in\ZZ(n)\sbs\bR^n$ and $x\in X^n$
one denotes $t(x)\in X$ by $\sum_i t_ix_i$.
The structure of a $\ZZ$-module $X$ is uniquely determined by the involution
$X\mto X$, $x\mto-x=(-1)x$,
and the convex combinations
$[0,1]\xx X^2\to X$, $(t,x,y)\mto (1-t)x+ty$.

Since $\ZZ\sbs\bR$,
there is the scalar extension functor $\Mod(\ZZ)\to\Mod(\bR)$, $X\mto X_\bR$.
For example, for a normed space $E$, the set $X=E_{\le1}$ is a $\ZZ$-module and $X\to X_\bR=E$ is injective.
But generally the canonical map $X\to X_\bR$ is not injective.
For example, consider $\ZZ$-modules $(-1,1)\sbs[-1,1]$ and their quotient $Q=\set{-1,0,1}$ equipped with convex combinations
$(1-t)x+ty=0$ for $t\in (0,1)$, $x$ for $t=0$ and $y$ for $t=1$.
Then $Q_\bR=0$.
The subcategory of objects $X\in\Mod(\ZZ)$ such that $X\to X_\bR$ is injective is called the category of flat $\ZZ$-modules and is equivalent to $\Ind(\NS)$~\dur[\S2.7].
The category $\NS\sbs\Norm_1\sbs\Ind(\NS)\sbs\Mod(\ZZ)$ is interpreted as the category of finitely-generated flat $\ZZ$-modules.
This should be compared with Lazard's theorem stating that a module over a commutative ring is flat if and only if it is a filtered colimit of finitely-generated flat (or free) modules.
\end{example}

\begin{remark}
For a monomorphism $f:X\emb Y$, let us denote $\Coker(f)$ by $Y/X$.
If $\cC$ is \qpa and $X\xto fY\xto gZ$ are strict mono,
then the induced morphism $\bar g:Y/X\to Z/X$ is strict mono.
Indeed, we have a diagram with cocartesian squares
\[\begin{tikzcd}
X\rar["f"]\dar&Y\rar["g"]\dar["h"]&Z\dar\\
0\rar&Y/X\rar["\bar g"]&Z/X
\end{tikzcd}\]
implying that $\bar g$ is the pushout of a strict mono $g$ along the strict epi $h=\coker(f)$.
Therefore $\bar g$ is strict mono.
In the case of $\cC=\Set^T$ this implies that $\bar g$ is injective.
\end{remark}

\begin{lemma}\label{para-crit}
Let $T$ be a finitary monad with zero such that for all strict mono
$X\emb Y\emb Z$ in $\Set^T$, the map $Y/X\to Z/X$ is injective.
Then $\Set^T$ is \qpa.
\end{lemma}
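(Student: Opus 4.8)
The plan is to verify directly the two defining conditions of a \qpa category for $\cC=\Set^T$; recall from \S\ref{fini} that $\Set^T$ already has kernels and cokernels because $T$ is pointed. So I must show that (1) strict epimorphisms are closed under pullbacks along strict monomorphisms, and (2) strict monomorphisms are closed under pushouts along strict epimorphisms. I will work throughout with the concrete descriptions from \S\ref{fini}: a morphism of $T$-algebras is a strict mono iff it is injective with saturated image; a surjection $q\colon Y\onto Z$ is a strict epi iff the canonical morphism $Y/\Ker(q)\to Z$ is an isomorphism; the saturation $S\mapsto\ang S_Y$ is monotone in $S$; and the expected identifications relating iterated quotients of $T$-algebras to single quotients hold (they follow from the description of cokernels as quotients by generated $T$-compatible equivalence relations). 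Pullbacks of strict monos and pushouts of strict epis will be constructed in the explicit form given by Lemma~\ref{pull/push}.

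For (1), take a strict epi $p\colon A\onto B$ and a strict mono $n\colon B'\emb B$. Following Lemma~\ref{pull/push}, put $h=\coker(n)\colon B\onto B/n(B')$ and $A'=\Ker(hp)$; since $n(B')$ is saturated in $B$, the object $A'$ is just the subalgebra $p\inv(n(B'))\sbs A$, the map $n'\colon A'\emb A$ is a strict mono (a pullback of one), and the induced $p'\colon A'\to B'$ is surjective with $\Ker(p')=\Ker(p)=:K$; in particular $K\emb A'$ is a kernel, hence a strict mono. The canonical surjection $A'/K\to B'$ is the composite of $A'/K\to A/K$ with the isomorphism $A/K\iso B$ coming from $p$ being a strict epi, and $A'/K\to A/K$ is injective exactly by the hypothesis applied to the composable strict monos $K\emb A'\emb A$. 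Hence $A'/K\to B'$ is an isomorphism, so $p'$ is a strict epi.

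For (2), take a strict mono $m\colon A'\emb B'$ and a strict epi $q\colon A'\onto A$ with kernel $K$. Following Lemma~\ref{pull/push}, the pushout of $q$ along $m$ is $B:=\coker(m|_K)$ together with the induced $m'\colon A\to B$ and the quotient $q'\colon B'\onto B$, satisfying $m'q=q'm$. Identify $A$ with $A'/K$ (using that $q$ is a strict epi) and $B$ with $\Coker(K\emb B')$ for the composite $K\emb A'\xto m B'$; then $m'$ becomes the induced map $A'/K\to\Coker(K\emb B')$, which is injective by the hypothesis applied to the composable strict monos $K\emb A'\emb B'$. Finally $m'(A)=q'(m(A'))$ is saturated in $B$: indeed $m(A')$ is saturated in $B'$ since $m$ is a strict mono, and $\Ker(q')=\ang{m(K)}_{B'}\sbs\ang{m(A')}_{B'}=m(A')$, so $m'(A)$ is the image under the strict epi $q'$ of a saturated subalgebra containing $\Ker(q')$, which a routine quotient computation shows is again saturated. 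Thus $m'$ is injective with saturated image, hence a strict mono, and $\Set^T$ is \qpa.

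The conceptual point — and the only place the hypothesis intervenes — is that the two injectivity statements ($A'/K\to A/K$ in (1) and $A'/K\to\Coker(K\emb B')$ in (2)) are precisely instances of the assumed injectivity, applied to the chains $K\emb A'\emb A$ and $K\emb A'\emb B'$. The main technical effort is the bookkeeping: identifying $A'$, $A$, $B$ and the maps $p',m'$ correctly, together with the small quotient and saturation computations, none hard but collectively the bulk of the write-up. As a by-product one also gets that strict monos in $\Set^T$ are closed under composition, since for strict monos $X\emb Y\emb Z$ monotonicity gives $\ang X_Z\sbs\ang Y_Z=Y$ while the hypothesis gives $\ang X_Z\cap Y=X$, whence $\ang X_Z=X$; this is not used above but matches the expected behaviour.
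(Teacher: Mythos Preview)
Your proof is correct and follows essentially the same approach as the paper: both verify the two \qpa axioms by applying the hypothesis to the chains of strict monos $K\emb A'\emb A$ (for the pullback axiom) and $K\emb A'\emb B'$ (for the pushout axiom), using the identifications $A\iso A'/K\emb A/K\iso B$ and $A\iso A'/K\emb B'/K\iso B$. The only cosmetic difference is that in Part~(2) you check that $m'(A)$ is saturated via a third-isomorphism-type quotient computation, whereas the paper argues directly that $m'$ surjects onto $h^{-1}(0)$ for $h=\coker(m')$; also, in Part~(1) your phrase ``is the composite of $A'/K\to A/K$ with the isomorphism $A/K\iso B$'' is literally a map into $B$, not $B'$ --- what you mean (and what the paper uses) is that this composite factors through $n\colon B'\emb B$, giving the required injectivity of $A'/K\to B'$.
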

\begin{proof}
Given a cartesian square
\[\begin{tikzcd}
X'\rar["f'"]\dar["g'"']&Y'\dar["g"]\\
X\rar["f"]&Y
\end{tikzcd}\]
with strict mono $f$ and strict epi $g$,
we know that $f'$ is strict mono and we need to show that $g'$ is strict epi.
For $K=\Ker(g')\iso\Ker(g)$,
we have strict mono $K\emb X'\emb Y'$,
hence an injective map $X'/K\emb Y'/K\iso Y$.
It factorizes as $X'/K\to X\to Y$, hence $X'/K\to X$ is injective.
The map $g$ is surjective, hence $g'$ and $X'/K\to X$ are surjective.
Therefore $X'/K\to X$ is an isomorphism.

Assume that the square is cocartesian, $f'$ is strict mono and $g'$ is strict epi.
Then $g$ is strict epi and we need to show that $f$ is strict mono.
For $h=\coker(f)$, we have $hg=\coker(f')$ and $f'=\ker(hg)$.
The map $g$ is surjective, hence $X'\iso (hg)\inv(0)\to h\inv(0)$ is surjective. Therefore $X\to h\inv(0)$ is surjective.
For $\vi=\ker(g'):K\to X'$, we have $g'=\coker(\vi)$, hence
$g=\coker(f'\vi)$.
Therefore $f:X\iso X'/K\to Y\iso Y'/K$ is injective,
hence $f:X\to h\inv(0)$ is injective.
This implies that $f:X\to h\inv(0)$ is an isomorphism.
\end{proof}

\begin{remark}[Commutative monads]
\label{com-monads}
A finitary monad $T$ is called commutative if for all $s\in T(m)$, $t\in T(n)$ and $x=(x_{ij})\in X^{mn}$ (for all $X\in\Set^T$),
we have $s(x')=t(x'')$, where
$x'_i=t(x_{i*})$ and $x''_j=s(x_{*j})$ for $i\in\bm$, $j\in\bn$.
It is enough to consider only $x_{ij}=e_{ij}\in X=T(mn)$ in the above definition.
It is proved in \dur[\S5.3] that under these conditions the category $\Set^T$ has a canonical structure of a closed symmetric monoidal category.
The tensor product $X\ts Y$ is a $T$-algebra such that $\Hom(X\ts Y,Z)$ parametrizes $T$-bilinear maps $\vi:X\xx Y\to Z$ (meaning that $\vi(x,\blank)$ and $\vi(\blank,y)$ are homomorphisms of $T$-algebras for all $x\in X$ and $y\in Y$).
The inner \Hom object is $\Hom_T(X,Y)\sbs Y^X$ equipped with the canonical $T$-algebra structure.

For example, for $T=\bF_1$ and $\Set^T=\Set_*$, the tensor product $X\ts Y$ of $X,Y\in\Set_*$ is the smash product $X\m Y$ and the inner \Hom is $\Hom(X,Y)$ equipped with the base point $0_{XY}$.
\end{remark}

\subsection{Commutative monoids and semirings}
\label{sec:cmon}

As in Example \ref{ex-cmon}, let $\CMon$ be the category of commutative monoids (written additively).
It corresponds to the finitary monad $\bN:\Set\to\Set$,
hence the category $\CMon\iso\Set^\bN$
is pointed and has all limits and colimits (see \S\ref{fini}).
The limits are constructed set-theoretically.

For a submonoid $X\sbs Y$, the equivalence relation $\sim_X=\ang{X\xx X}$ on $Y$ is given by (\cf~\cite{jun_proto})
\[y_1\sim_X y_2\qquad\iff\qquad y_1+x_1=y_2+x_2\text{ for some }x_1,x_2\in X.\]
The set $Y/X$ of equivalence classes inherits the monoid structure.
The saturation of $X$ is the equivalence class of $0$
\[\ang X=\sets{y\in Y}{y+x\in X\text{ for some }x\in X}\sps X.\]
Kernels and cokernels are constructed using \eqref{ker-coker-T}.

\begin{example}\label{ex:cmon Im/Coim}
For $f:X\to Y$, the canonical morphisms $\Coim(f)\to f(X)\to\Im(f)$ don't have to be isomorphisms.
For example, for $f:\bN^2\to\bN$, $(x,y)\mto x+y$,
we have $f\inv(0)=\set{0}$, hence $\Coim(f)=\bN^2\to f(\bN^2)=\bN$ is not an isomorphism.
For the embedding $f:X=\bN\ms1\emb\bN$,
we have $f(X)=X\ne\Im(f)=\ang{X}=\bN$.
\end{example}

\begin{theorem}
The category $\CMon$ is \qpa.
\end{theorem}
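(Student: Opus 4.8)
The plan is to apply the criterion of Lemma \ref{para-crit}: it suffices to show that for all strict monomorphisms $X\emb Y\emb Z$ in $\CMon$, the induced map $Y/X\to Z/X$ is injective. Since in $\CMon$ a morphism is strict mono precisely when it is injective with saturated image, I will identify $X$ and $Y$ with saturated submonoids $X\sbs Y\sbs Z$, where $X$ being saturated both in $Y$ and in $Z$ means $\ang X_Y=X$ and $\ang X_Z=X$ (the latter using transitivity: a saturated submonoid of a saturated submonoid need not a priori be saturated in the big one, so I should first check $X$ is saturated in $Z$, which follows since $\ang X_Z\cap Y=\ang X_Y=X$ by the explicit formula for $\ang{\,\cdot\,}$ combined with $Y$ being saturated in $Z$).

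The core computation uses the explicit description of $\sim_X$ recalled just before the theorem: for $z_1,z_2\in Z$ one has $z_1\sim_X z_2$ iff $z_1+x_1=z_2+x_2$ for some $x_1,x_2\in X$, and likewise for $\sim_X$ restricted to $Y$. The map $Y/X\to Z/X$ sends the class $[y]_{Y/X}$ to $[y]_{Z/X}$. To prove injectivity, suppose $y_1,y_2\in Y$ satisfy $[y_1]_{Z/X}=[y_2]_{Z/X}$, i.e. $y_1+x_1=y_2+x_2$ in $Z$ for some $x_1,x_2\in X$. But $x_1,x_2\in X\sbs Y$, so this is already an equality in $Y$, hence $y_1\sim_X y_2$ in $Y$, i.e. $[y_1]_{Y/X}=[y_2]_{Y/X}$. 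This is the whole content, and it does not even use saturation of $X$ in $Z$ — the point is simply that the relation $\sim_X$ on $Z$ restricts to the relation $\sim_X$ on $Y$ because the "witnesses" $x_1,x_2$ live in $X\sbs Y$.

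The only subtlety, and the step I expect to require the most care, is making sure the statement I am verifying matches the hypothesis of Lemma \ref{para-crit} verbatim: the lemma asks for $X\emb Y\emb Z$ to be strict mono, so I must confirm that the composite $X\emb Z$ is strict mono (which I already noted via $\ang X_Z\cap Y = X$ plus $\ang{\,\cdot\,}\cap Y\supseteq$ the obvious inclusions), and that $Y/X\to Z/X$ is the canonically induced map $\bar g$ of the Remark preceding Lemma \ref{para-crit}, so that injectivity of $\bar g$ is exactly what Lemma \ref{para-crit} consumes. Once that bookkeeping is in place, Lemma \ref{para-crit} immediately yields that $\CMon$ is \qpa, completing the proof.
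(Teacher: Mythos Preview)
Your proposal is correct and follows the same approach as the paper: both invoke Lemma~\ref{para-crit} and reduce to the observation that the equivalence relation $\sim_X$ on $Z$ restricts to the equivalence relation $\sim_X$ on $Y$, since the witnesses $x_1,x_2$ already lie in $X\sbs Y$. One minor point: the hypothesis of Lemma~\ref{para-crit} only requires the two arrows $X\emb Y$ and $Y\emb Z$ to be strict mono, not the composite $X\emb Z$, so your extra verification that $X$ is saturated in $Z$ is unnecessary (though correct and harmless).
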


\begin{proof}
By Lemma \ref{para-crit} we need to show that given saturated submonoids $X\sbs Y$ and $Y\sbs Z$, the induced map $Y/X\to Z/X$ is injective.
This follows from the fact that the equivalence relation $\sim_X$ on $Y$ is induced by the equivalence relation $\sim_X$ on $Z$.
\end{proof}

The monad $\bN$ is commutative, hence the category $\CMon$ is equipped with the canonical structure of a closed symmetric monoidal category by Remark \ref{com-monads}.
A monoid in $(\CMon,\ts)$ is called a \idef{semiring}.
Explicitly, a semiring is a triple $(A,+,\cdot)$ such that
$(A,+)$ is a commutative monoid (with the zero element $0$), $(A,\cdot)$ is a monoid (with the identity $1\in A$)
and the operations satisfy distributivity and $a0=0a=a$ for all $a\in A$.
A semiring~$A$ defines the monad $T:\CMon\to\CMon$, $X\mto A\ts X$ and the category $\Mod(A)=\CMon^T$ of $A$-modules.
Explicitly, a module over $A$ is a commutative monoid $(M,+)$ equipped with a bilinear product $\cdot:A\xx M\to M$
that satisfies associativity and $1x=x$ for all $x\in M$.

\begin{corollary}[\cf {\cite{jun_proto}}]
Given a semiring $A$, the category $\Mod(A)$ is \qpa.
\end{corollary}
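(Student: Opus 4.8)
The plan is to obtain this as essentially a one-line consequence of Corollary \ref{parab-modules1}. The two ingredients are already in hand: the theorem above shows that $\CMon$ is \qpa, and since the monad $\bN$ is commutative, Remark \ref{com-monads} equips $\CMon\iso\Set^\bN$ with a canonical closed symmetric monoidal structure $(\CMon,\ts)$, whose internal hom is $\hom(X,Y)=\Hom_\bN(X,Y)\sbs Y^X$.

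The remaining work is purely bookkeeping. By the definitions recalled just above, a semiring is exactly a monoid in $(\CMon,\ts)$, and $\Mod(A)$ is, by definition, the \EM $\CMon^T$ for the monad $T=A\ts\blank\colon\CMon\to\CMon$. So I would simply apply Corollary \ref{parab-modules1} with $\cC=\CMon$ to conclude that $\Mod(A)$ is \qpa. Equivalently, one may invoke Lemma \ref{T-alg} directly: closedness of $(\CMon,\ts)$ makes $T=A\ts\blank$ a left adjoint of $\hom(A,\blank)=\Hom_\bN(A,\blank)$, hence right exact (it preserves all colimits, in particular cokernels), and Lemma \ref{T-alg} then gives that $\CMon^T=\Mod(A)$ is parabelian.

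There is no real obstacle here: the whole content has been pushed into the general result on modules over a monoid in a closed symmetric monoidal \qpa category and into the verification that $\CMon$ is such a category. The only point one should not gloss over is that $\hom(A,\blank)$ is genuinely right adjoint to $A\ts\blank$ on $\CMon$, i.e.\ that the monoidal structure of Remark \ref{com-monads} is closed in the sense needed to apply Lemma \ref{T-alg}; but this is precisely what that remark records, following \dur.
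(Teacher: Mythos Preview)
Your proposal is correct and follows exactly the paper's approach: apply Corollary~\ref{parab-modules1} to the closed symmetric monoidal \qpa category $\CMon$ (using the previous theorem and Remark~\ref{com-monads}), noting that a semiring is a monoid in $(\CMon,\ts)$ and $\Mod(A)=\CMon^T$ for $T=A\ts\blank$. The additional unpacking via Lemma~\ref{T-alg} is just the content of Corollary~\ref{parab-modules1} spelled out.
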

\begin{proof}
This follows from the previous result and
Corollary \ref{parab-modules1}.
\end{proof}

\begin{remark}[Additive monads]
Let $T$ be a finitary monad with zero
and let $0_n$ denote the base point of $T(n)$.
Define the comparison maps $\pi_n:T(n)\to T(1)^n$,
where $\pi_n(t)_i=t(0_1,\dots,1,\dots,0_1)$ for $t\in T(n)$ and $i\in \bn$, with $1\in T(1)$ placed in the $i$-th position.
The monad $T$ is called \idef{additive} (resp.\ \idef{hypoadditive}) if the maps $\pi_n$ are bijective (resp.\ injective) \dur[\S4.8].
It is proved in \dur that a monad $T$ is additive \iff the category $\Set^T$ is equivalent to the category of modules over a semiring.
This implies that the category $\Set^T$ is \qpa for an additive monad $T$.
It is unclear if the same is true for hypoadditive monads.
Below we will consider a class of such monads.
\end{remark}

\subsection{Special finitary monads}
The class of all finitary monads is too broad, while the class of additive monads is too restrictive, as they correspond exactly to semirings.
In this section we introduce an intermediate class of finitary monads
associated with multiplicative submonoids in semirings,
which encompasses most of the monads we are interested in.

Let $A$ be a semiring and $\AA$ be its finitary monad such that $\Mod(A)\iso\Set^{\AA}$ \eqref{ring monad}.
We have $\AA(n)=A^n$ and the products
\[\mu:\AA(m)\xx \AA(n)^m\to \AA(n),\qquad (s,t)\mto s(t)=
\rbr{\sum*_{i\in\bm} s_it_{ij}}_{j\in\bn}.\]
The unit $\eta:\bn\to \AA(n)$ is given by $i\mto e_i\in A^n$ (the standard basis vector).
By Lemma \ref{lm:submon}, for a multiplicative submonoid $S\sbs A=\AA(1)$,
there is a submonad $\bar S=\AA_S\sbs \AA$ defined by
\begin{equation}\label{submonad1}
\bar S(n)=\sets{t\in A^n}{t(x)=\sum*_{i}t_ix_i\in S\ \forall x\in S^n}.
\end{equation}
It satisfies $\bar S(1)=S$.
If $0\in S$, then $\bar S$ is a pointed monad and $\bar S(n)\sbs S^n$.
We define the category of $S$-modules $\Mod(S)=\Set^{\SS}$.
If $S\sbs A$ is a semiring, then $\bar S(n)=S^n$, hence $\bar S$ coincides with the monad \eqref{ring monad}
and $\Mod(S)$ coincides with the category of $S$-modules introduced in \S\ref{sec:cmon}.
By abuse of notation, we will sometimes denote the monad $\SS$ by $S$.

\begin{remark}[Weak partial semirings]
\label{WPS}
To make the construction of the monad  $\bar S$ depend
intrinsically on a multiplicative submonoid $0\in S\sbs A$,
we can equip $S$ with the partial addition
$a\bplus b=a+b$ if $a+b\in S$ and~$\infty$ otherwise (for a new absorbing element $\infty$).
The resulting structure $(S,\bplus,\cdot)$
is slightly weaker than the structure of a \idef{partial semiring} (\cf \cite{manes_inverse}),
defined to be a triple $(B,+,\cdot)$
such that $(B\sqcup\infty,+)$ is a commutative semigroup with the zero element $0\in B$ and the absorbing element $\infty$
and $(B,\cdot)$ is a monoid satisfying
\begin{enumerate}
\item $a0=0a=0$ for all $a\in B$.
\item
$a(b+c)=ab+ac$ and $(b+c)a=ba+ca$ if $b+c\ne\infty$ for $a,b,c\in B$.
\end{enumerate}
In our case, we have associativity $(a\bplus b)\bplus c=a\bplus (b\bplus c)$ only if both sides are different from~$\infty$.
For example, for $S=[-1,1]\sbs\bR$, we have $(-1\bplus 1)\bplus 1=1$, while $-1\bplus (1\bplus 1)=\infty$.
For $t\in S^n$, let $\sum_{i=1}^nt_i=t_1+\sum_{i=2}^nt_i$.
Then the monad $\bar S$ is defined by
\[\bar S(n)
=\sets{t\in S^n}{t(x)=\sum*_{i}t_ix_i\in S\ \forall x\in S^n}.
\]
Note that we obtain the same monad if $S\sbs A$ is equipped with a different partial addition $t_1\bplus t_2=t_1+t_2$ if $t(S^2)\sbs S$ and $\infty$ otherwise.
\end{remark}

For $X\in\Mod(S)$, we have structure maps
$\al:\SS(n)\xx X^n\to X$
and we denote $\al(t,x)$ by $t(x)=\sum_{i=1}^n t_ix_i$
for $t\in \bar S(n)\sbs S^n$ and $x\in X^n$.
For an $S$-submodule $X\sbs Y$, the equivalence relation $\sim_X=\ang{X\xx X}$ on $Y$
is generated by relations
\begin{equation}\label{rel WPS}
\sum*_i s_ix_i+\sum*_jt_jy_j\sim_X \sum*_jt_jy_j
\end{equation}
for $(s,t)\in \bar S(m+n)\sbs S^m\xx S^n$, $x\in X^m$ and $y\in Y^n$.

\begin{example}[Affine and convex spaces]
For a semiring $A$ and the monoid $D=\set1\sbs A$,
the monad $D_A=\bar A_D\sbs \bar A$ is given by $A$-valued distributions
\[D_A(n)=\sets{t\in A^n}{\sum*_i t_i=1}.\]
For $X\in\Mod(D_A)$, the structure maps $\al:D_A(n)\xx X^n\to X$
are uniquely determined by the map
$A\xx X^2\to X$, $(t,x,y)\mto (1-t)x+ty$.
If $A$ is a field, then $\Mod(D_A)$ is the category of affine spaces over $A$.
If $A=\bR_+=\bR_{\ge0}$ (with the usual addition and multiplication), then the monad $D=\bar A_D\sbs\bar A$ is given by
\[D(n)=\sets{t\in \bR_+^n}{\sum*_{i}t_i=1}\]
and is called the \idef{distribution monad}.
The modules over $D$ are called \idef{convex spaces}
\cite{stone_postulates,boerger_cogenerators,fritz_convex,jacobs_convexity}.
For example, if $X$ is a convex set in a real vector space, then $X$ has a structure of a convex space
given by $t(x)=\sum_{i=1}^n t_ix_i$ for $t\in D(n)$ and $x\in X^n$.
\end{example}

\begin{example}[Pointed convex spaces]
\label{p-conv}
For the pointed submonoid $D_*=[0,1]\sbs \bR_+$,
the corresponding monad $D_*\sbs\bar\bR_+$ is given by
\[D_*(n)=\sets{t\in\bR^n_+}{\sum*_i t_i\le1}\iso D(n+1)\]
and is called the \idef{sub-distribution monad}.
The modules over $D_*$ can be identified with \idef{pointed convex spaces}  (convex spaces $X$ equipped with a base point $0_X$).
For $X\in\Mod(D_*)$, the base point is the image of $\al:D_*(0)\xx X^0=\set0\to X$.
Conversely, a pointed convex space $(X,0_X)$ is equipped with the $D_*$-module structure by the formula
$t(x)=t_0 0_X+\sum_{i=1}^n t_ix_i$,
for $t\in D_*(n)$, $x\in X^n$ and $t_0=1-\sum_{i=1}^n t_i$.
In \cite{boerger_cogenerators} the objects of $\Mod(D_*)$
are called (finitely) positively convex spaces.
For $X\in\Mod(D_*)$, the scalar extension
$X_{\bR_+}\in\Mod(\bR_+)$ (a convex cone)
is the quotient of $\bR_+\xx X$
by the relations $(t,x)\sim(ts\inv,sx)$ for $s\in(0,1]$ and $(t,0)\sim(0,x)$ for $t\in\bR_+,x\in X$.
The induced map $X\to X_{\bR_+}$ is not necessarily injective.
For example, consider $X=\set{0,1}$ with $(1-t)0+t1=\de_{t,1}$
(it is the quotient of the embedding $[0,1)\emb[0,1]$).
Then $X_{\bR_+}=0$.
\end{example}

\begin{remark}
The above relation between the monads $D$ and $D_*$ can be generalized as follows.
For a monad $T$ on $\Set$,
we can define the monad $T_*$ by the formula $T_*(X)=T(X_*)$, where $X_*=X\sqcup 0_X$.
\end{remark}

\begin{example}[Absolutely convex spaces]
\label{Zinf-2}
For the pointed submonoid $\ZZ=[-1,1]\sbs\bR$,
the corresponding monad $\ZZ\sbs\bar\bR$ is given by
(\cf Example \ref{Zinf-1})
\[\ZZ (n)=\sets{t\in\bR^n}{\nn t_1\le1}.\]
Indeed,
$\ZZ(n)=\sets{t\in\bR^n}{\n{t(x)}\le1\text{ if }\nn x_\infty\le1}=\sets{t\in\bR^n}{\nn t_1\le1}$ since $\nn-_1$ is dual to the norm $\nn-_\infty$.
The notation is justified by the fact that
$\ZZ=\sets{x\in\bQ_\infty}{\n x_\infty\le1}$,
where $\n-_\infty=\n-$ is the usual norm on $\bQ$ and $\bQ_\infty=\bR$ is the completion
\cite{durov_new,fresan_compactification}.
Similarly, for a prime number $p$ and the $p$-adic norm $\n-_p$ on $\bQ$, we have $\bZ_p=\sets{x\in\bQ_p}{\n x_p\le1}$.

A $\ZZ$-module $X$ gives rise to a pointed convex space equipped with an involution $X\to X$, $x\mto -x=(-1)x$,
such that $t(-x)=-t(x)$ and if $x=-x$, then $x=0_X$.
This is an object in $\Mod(D_*)$ equipped with an action of $\mu_2=\set{\pm1}$.
Therefore we have a fully faithful functor $F:\Mod(\ZZ)\to\Mod(D_*)^{\mu_2}$.
The image consists of $X\in\Mod(D_*)^{\mu_2}$ such that $\mu_2$ acts freely on $X\ms0_X$.
The functor $F$ is exact.
\end{example}

\begin{example}
Given a monoid $G$, let $\Set_*^G=[G,\Set_*]$ be the category of pointed sets equipped with a $G$-action (see Remark \ref{mod over monoids}).
The forgetful functor $U:\Set_*^G\to\Set$ has the left adjoint $F:\Set\to\Set_*^G$, $X\mto \bigsqcup_{x\in X}G\sqcup\set0$.
The corresponding monad $T=UF$ is given by
$T(n)=G^n\sqcup\set0$.
On the other hand, let us embed $G_*=G\sqcup\set0$ into the monoid ring $\bZ G$
(this corresponds to the partial semiring structure on $G_*$ with the zero element $0\in G_*$ and $g+h=\infty$ for $g,h\in G$).
The monad $\bar G_*$ is given by
(using $e_i(g)\in G_*^n$ with $e_i(g)_j=g$ for $j=i$ and zero otherwise)
\[\bar G_*(n)
=\sets{e_i(g)}{g\in G,\, i\in\bn}\sqcup\set0
\iso G^n\sqcup \set0=T(n).\]
We conclude that $\bar G_*\iso T$ and $\Mod(G_*)\iso\Set_*^G$.
This category is \qpa by Lemma \ref{Fun}.

In particular, for $G=\set1$ and the partial semiring $\bF_1=G_*=\set{0,1}$ (with the sum $1+1=\infty$),
we obtain $\Mod(\bF_1)\iso\Set_*$.
For the partial semiring $\bF_{1^r}=(\mu_r)_*$ (with the sum $g+h=\infty$ for $g,h\in\mu_r$),
we obtain $\Mod(\bF_{1^r})\iso\Set_*^{\mu_r}$.
\end{example}

\begin{example}\label{mu_n}
For the multiplicative submonoid $S=\mu_r\sqcup\set0\sbs\bC$,
the monad is given by
\[\bar S(n)=\sets{t\in S^n}{t(S^n)\sbs S}
=\sets{e_i(g)\in S^n}{g\in\mu_r,\, i\in\bn}\sqcup\set0\iso\bar \bF_{1^r}(n).\]
Therefore $\bar S\iso\bar\bF_{1^r}$ and $\Mod(S)\iso\Mod(\bF_{1^n})\iso\Set_*^{\mu_r}$.
Note that the addition operations on $S$ and $\bF_{1^r}$ are different for even $r$ (we have $-1+1=0$ in $S$ and $-1+1=\infty$ in $\bF_{1^r}$).
\end{example}

\begin{example}
The (multiplicative) monoid $S=\set{0,1}$ can have three different partial semiring structures depending on the value of $1+1$.
\begin{enumerate}
\item If $1+1=0$, then $S=\bF_2=\bZ/2\bZ$ is a field and $\Mod(S)=\Mod(\bF_2)$ is abelian.
\item If $1+1=1$, then $S$ is the boolean semiring $B=\set{0,1}$ with addition $\vee$ and multiplication~$\m$.
The category $\Mod(S)=\Mod(B)$ is \qpa.
\item If $1+1=\infty$, then $\bar S=\bar \bF_1$
and $\Mod(S)=\Mod(\bF_1)\iso \Set_*$ is \qpa.
\end{enumerate}
\end{example}

\subsection{Valuation monads}
\label{val-mon}
A \idef{seminorm} on a semiring $K$ is a map $\n\blank:K\to\bR_+$ such that
\begin{enumerate}
\item $\n 0=0$ and $\n 1=1$.
\item $\n{xy}\le \n x \n y$.
\item $\n{x+y}\le\n x+\n y$.
\end{enumerate}
It is called
\begin{enumerate}
\item \idef{multiplicative} if $\n{xy}=\n x\n y$.
\item a \idef{norm} if $\n x=0$ \iff $x=0$.
\item a \idef{valuation} if it is a multiplicative norm.
\item \idef{non-archimedean} if $\n{x+y}\le\max\set{\n x,\n y}$.
\end{enumerate}

Given a semiring $K$ equipped with a seminorm,
we consider the multiplicative monoid
\[\cO_K=\sets{x\in K}{\n x\le1}\]
equipped with the partial addition as in Remark \ref{WPS}
and the corresponding (finitary and pointed) monad $\bar\cO_K\sbs \bar K$ (also denoted by $\cO_K$)
\begin{equation}
\bar\cO_K(n)=\sets{t\in K^n}{t(\cO_K^n)\sbs\cO_K},\qquad n\ge0,
\end{equation}
called the \idef{valuation monad} of $K$.
The category of $\cO_K$-modules is $\Mod(\cO_K)=\Set^{\bar\cO_K}$.
If the seminorm is non-archimedean,
then $\cO_K$ is a semiring and $\bar\cO_K(n)=\cO_K^n$,
hence $\Mod(\cO_K)$ is \qpa.
One may ask if $\Mod(\cO_K)$ is \qpa in general.
For example,
\begin{enumerate}
\item For $K=\bN$ with the usual norm, we have $\cO_K=\set{0,1}=\bF_1$
and $\Mod(\cO_K)\iso\Set_*$.

\item For a semiring $K$ with the trivial valuation $\n x=1$ for $x\ne0$, we have $\cO_K=K$.

\item For a monoid $G$ and the semiring $K=\bN G$ with the valuation $\n{\sum_g x_g g}=\sum_g x_g$,
we have $\cO_K=G\sqcup \set0=G_*$
and $\Mod(\cO_K)\iso\Set_*^G$.

\item For $K=\bR_+$, we have $\cO_K=[0,1]=D_*$ and the modules are pointed convex spaces.

\item For $K=\bR$, we have $\cO_K=[-1,1]=\ZZ$
and the modules are absolutely convex spaces.
\end{enumerate}
We have seen earlier that the categories in the first three examples are \qpa and we will see below that the categories $\Mod(D_*)$ and
$\Mod(\ZZ)$ are also \qpa.

We can also define another monad $\cO_K'\sbs\bar K$ by
\begin{equation}
\cO'_K(n)=\sets{t\in K^n}{\nn t_1=\sum*_i\n{t_i}\le1},\qquad n\ge0.
\end{equation}
Indeed, for $s\in\cO'_K(m)$, $t\in\cO'_K(n)^m$, we have
$\nn{s(t)}_1=\sum_j \n{\sum_i s_it_{ij}}\le\sum_i\n s_i\sum_j\n{t_{ij}}\le\sum_i\n s_i\le1$, hence $s(t)\in\cO'_K(n)$.
In particular, $\cO'_K(1)=\cO_K$, hence $s(\cO_K^m)\sbs\cO_K$ and $s\in\bar\cO_K(m)$.
Therefore $\cO'_K\sbs\bar\cO_K\sbs\bar K$.
The monads $\cO'_K$ and $\bar\cO_K$ are different if the seminorm is non-archimedean, but they coincide for some archimedean valuations.

\begin{lemma}
If $K\sbs\bC$ is a semifield closed under conjugation, then
$\bar\cO_K=\cO_K'$.
\end{lemma}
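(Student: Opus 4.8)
Since the inclusion $\cO'_K\sbe\bar\cO_K$ has already been established, the plan is to prove the reverse inclusion levelwise: fix $n$ and $t=(t_1,\dots,t_n)\in\bar\cO_K(n)$, so that $\sum_i t_ix_i\in\cO_K$ for every $x\in\cO_K^n$, and show $\sum_i\n{t_i}\le1$, i.e.\ $t\in\cO'_K(n)$. I would argue by contradiction: assuming $\sum_i\n{t_i}>1$, I will produce a vector $x\in\cO_K^n$ for which $t(x)=\sum_i t_ix_i$ is a \emph{real} number strictly greater than $1$, which contradicts $t(x)\in\cO_K$.

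The first step is to record that each coordinate already satisfies $t_j\in\cO_K$: plugging the basis vector $e_j\in\cO_K^n$ into the hypothesis gives $t_j=t(e_j)\in\cO_K$, hence $\n{t_j}\le1$ for all $j$, and in particular the assumption $\sum_i\n{t_i}>1$ forces $t_j\ne0$ for some $j$. The second and main step is the construction of $x$. The natural choice $x_j=\overline{t_j}/\n{t_j}$ would give $t(x)=\sum_j\n{t_j}>1$, but $\n{t_j}^{-1}$ need not lie in $K$. Instead I would rescale coordinatewise by rationals: for each $j$ with $t_j\ne0$ pick $c_j\in\bQ_{\ge0}$ with $c_j\le 1/\n{t_j}$ and set $x_j=c_j\overline{t_j}$, and put $x_j=0$ when $t_j=0$. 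Here $\overline{t_j}\in K$ since $K$ is closed under conjugation, $c_j\in K$ since a semifield contains $\bN$ and hence all of $\bQ_{\ge0}$, and $\n{x_j}=c_j\n{t_j}\le1$; therefore $x\in\cO_K^n$ and $t(x)=\sum_{j:\,t_j\ne0}c_j\,t_j\overline{t_j}=\sum_{j:\,t_j\ne0}c_j\n{t_j}^2\in\bR_{\ge0}$.

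The final step is to choose the $c_j$ close enough to $1/\n{t_j}$ — possible since $\bQ_{\ge0}$ is dense in $\bR_{\ge0}$ — that $\sum_{j:\,t_j\ne0}c_j\n{t_j}^2>1$; this is legitimate because the sum tends to $\sum_j\n{t_j}>1$ as each $c_j\to1/\n{t_j}$, and there are only finitely many indices. For such a choice, $t(x)$ is a nonnegative real exceeding $1$, so $\n{t(x)}>1$ and $t(x)\notin\cO_K$, contradicting $t\in\bar\cO_K(n)$. Hence $\sum_i\n{t_i}\le1$, giving $\bar\cO_K(n)=\cO'_K(n)$ for all $n$.

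I do not expect a genuine obstacle here; the argument is elementary once one spots the coordinatewise rescaling. The two places where the hypotheses enter are exactly that $K$ is closed under conjugation (so $\overline{t_j}\in K$) and that $K$ is a semifield (so $\bQ_{\ge0}\sbe K$, supplying enough positive scalars to approximate $1/\n{t_j}$), and the one point to be careful about is that a single global scalar does not in general keep all $\n{x_j}\le1$ while pushing $t(x)$ past $1$, so the rescaling must be done coordinate by coordinate.
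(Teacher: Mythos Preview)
Your proof is correct and is essentially the same argument as the paper's: both pick the test vector $x_j=\bar t_j/q_j$ with $q_j\in\bQ_{>0}$ slightly larger than $\n{t_j}$ (your $c_j$ is their $1/(\n{t_j}+\eps_j)$), compute $t(x)=\sum_j\n{t_j}^2/q_j$, and let $q_j\downarrow\n{t_j}$. The only cosmetic differences are that the paper phrases this as a direct limit rather than by contradiction, and it prefaces the choice of $x$ with the observation $\n{t(x)}\le\nn x_\infty$ for all $x\in K^n$, which you (correctly) do not need since you verify $x\in\cO_K^n$ directly.
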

\begin{proof}
We have seen that $\cO'_K\sbs\bar\cO_K$.
Conversely, let $t\in \bar\cO_K(n)$.
For $x\in K^n$ and $c\in\bQ$ satisfying
$c>\nn x_\infty$,
we have $x/c\in\cO^n_K$, hence $t(x/c)\in\cO_K$ and $\n{t(x)}\le c$.
This implies $\n {t(x)}\le \nn x_{\infty}$.
We choose $x\in K^n$ with $x_i=\bar t_i/(\n{t_i}+\eps_i)$, where $\n t_i+\eps_i\in\bQ$ and $\eps_i>0$.
Then $t(x)=\sum_i\frac{\n{t_i}^2}{\n {t_i}+\eps_i}\le1$ and we conclude that $\sum_i\n {t_i}\le1$.
\end{proof}

\begin{lemma}\label{lm:conv-satur}
Let $K\sbs\bC$ be a semifield such that $1-t\in K$ for all $t\in K\cap(0,1)$.
Let $X\sbs Y$ be a saturated submodule of $Y\in \Mod(\cO_K)$
and $x\in X$, $y\in Y$ be such that
$(1-u)x+uy\in X$ for some $u\in K\cap(0,1)$.
Then $(1-t)x+ty\in X$ for all $t\in K\cap[0,1)$.
\end{lemma}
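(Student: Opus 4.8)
The plan is to use the saturation of $X$ to convert the claim into a statement about the elements $ty\in Y$, and then to reach every $t<1$ by iterating a construction that pushes $u$ toward~$1$.

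First I would record the following reformulation. For $s\in K\cap[0,1]$ with $1-s\in K$ one has $(1-s,s)\in\bar\cO_K(2)$, since $\n{(1-s)a+sb}\le(1-s)+s=1$ for all $a,b\in\cO_K$; hence \eqref{rel WPS} with $m=n=1$ gives $(1-s)w+sv\sim_X sv$ for all $w\in X$, $v\in Y$. Because $X$ is saturated, $\ang{X\xx X}\cap(X\xx Y)=X\xx X$, so $(1-s)w+sv\in X$ if and only if $sv\in X$. Applying this with $w=x$, $v=y$: taking $s=u$ (here $1-u\in K$, as $u\in K\cap(0,1)$) the hypothesis $(1-u)x+uy\in X$ forces $uy\in X$; and taking $s=t$ (using $1-t\in K$, trivial when $t=0$) the desired conclusion $(1-t)x+ty\in X$ is equivalent to $ty\in X$. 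So it suffices to show $ty\in X$ for every $t\in K\cap[0,1)$.

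Next I would set up the iteration $u_1=u$, $u_{k+1}=1-(1-u_k)^2$. By induction $u_k\in K\cap(0,1)$: if $u_k\in K\cap(0,1)$, then $(1-u_k)^2\in K$ and lies in $(0,1)$, so $1-(1-u_k)^2\in K\cap(0,1)$ by the hypothesis on $K$. Moreover $1-u_k=(1-u)^{2^{k-1}}\to0$, so $u_k\to1$. I claim $u_ky\in X$ for all $k$; the base case is the reduction above. Assume $u_ky\in X$. Then $v_k:=(1-u_k)x+u_ky\in X$ by the reformulation (with $s=u_k$, $w=x$), and then $(1-u_k)v_k+u_ky\in X$ by the reformulation again (with $s=u_k$, $w=v_k$). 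A direct computation with the $\cO_K$-module operations on $Y$ — expanding $(1-u_k)v_k=(1-u_k)^2x+(1-u_k)u_ky$ and collecting the $y$-terms — gives $(1-u_k)v_k+u_ky=(1-u_k)^2x+\bigl(1-(1-u_k)^2\bigr)y=(1-u_{k+1})x+u_{k+1}y$. One more application of the reformulation (with $s=u_{k+1}$) yields $u_{k+1}y\in X$, completing the induction. Finally, given $t\in K\cap[0,1)$, choose $k$ with $u_k>t$; since $K$ is a semifield, $t/u_k\in K$, and $0\le t/u_k<1$, so $t/u_k\in\cO_K$ and hence $ty=(t/u_k)(u_ky)\in X$ because $X$ is a submodule.

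The argument is a sequence of careful checks rather than a single hard step; the points that need attention are verifying at each stage that the scalars involved ($1-u_k$, $u_{k+1}$, $t/u_k$, and the pairs $(1-s,s)$) really lie in $K$, in $\cO_K$, or in $\bar\cO_K(2)$ — which is exactly where the two standing hypotheses on $K$ (closure under $t\mapsto1-t$ on $K\cap(0,1)$, and being a semifield) are used — and confirming the module identity $(1-u_k)v_k+u_ky=(1-u_{k+1})x+u_{k+1}y$ from the algebra axioms. I do not expect a conceptual obstacle beyond arranging the sequence $u_k$ so that it converges to $1$ while remaining inside $K$.
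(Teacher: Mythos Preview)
Your proof is correct and follows essentially the same strategy as the paper's: use the basic relation $(1-s)w+sv\sim_X sv$ for $w\in X$ together with saturation to push the admissible parameter toward~$1$, then use division in the semifield to fill in the intermediate values. Your preliminary reduction to the statement $ty\in X$ and your choice of the doubling iteration $u_{k+1}=1-(1-u_k)^2$ are cosmetic variants of the paper's argument, which works directly with the segment $x_t=(1-t)x+ty$ and the linear iteration $t\mapsto (1-t)u+t$ (so $1-t_{n}=(1-u)^{n}$ rather than $(1-u)^{2^{n-1}}$).
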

\begin{proof}
Let $x_t=(1-t)x+ty$ for $t\in K\cap[0,1]$.
As $x_0=x\in X$ and $x_u\in X$, we have $x_t\in X$ for all $t\in[0,u]$.
Indeed, with $a=t/u\in K\cap[0,1]$, we have
$x_t=(1-au)x+auy=(1-a)x+ax_{u}\in X$.

For $t\in K\cap(0,1)$, we have
$x_t\sim (1-t)x_u+ty=(1-t)(1-u)x+(u-tu+t)y=x_{u(t)}$,
where $u(t)=(1-t)u+t\in(t,1)$.
An interval $[u,t']\sbs(0,1)$ can be covered by finitely many intervals of the form $[t,u(t)]$ for $t\in K\cap(0,1)$.
Assuming by induction that $x_t\in X$, we obtain $x_{u(t)}\in X$
as $x_{u(t)}\sim x_t$ and $X$ is saturated.
Therefore $x_s\in X$ for all $s\in K\cap[0,u(t)]$.
We conclude that that $x_{t'}\in X$.
\end{proof}

\begin{theorem}
If $K$ is a complete valued field, then $\Mod(\cO_K)$ is \qpa.
\end{theorem}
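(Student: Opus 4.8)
The plan is to split into the non-archimedean and archimedean cases, reduce the latter to $K=\bR$ or $K=\bC$, and then invoke the criterion of Lemma \ref{para-crit}. If the valuation of $K$ is non-archimedean, then $\cO_K=\sets{x\in K}{\n x\le1}$ is a subring of $K$ and $\bar\cO_K(n)=\cO_K^n$, so $\Mod(\cO_K)$ is the category of modules over the semiring $\cO_K$ and is \qpa by the corollary on modules over semirings (this case is already observed in \S\ref{val-mon}). If the valuation is archimedean, then since $K$ is complete, Ostrowski's theorem identifies $K$, as a valued field, with $\bR$ or $\bC$ equipped with the usual absolute value raised to some power $s\in(0,1]$; as both the monoid $\cO_K=\sets{x\in K}{\n x\le1}$ and the partial addition it inherits from $K$ are unchanged when $\n\blank$ is replaced by $\n\blank^s$, we may assume $K=\bR$ or $K=\bC$ with the standard absolute value. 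Then $K\sbs\bC$ is a semifield closed under conjugation, so by the lemma above $\bar\cO_K=\cO'_K$, i.e.\ $\bar\cO_K(n)=\sets{t\in K^n}{\sum_i\n{t_i}\le1}$, and the module operations are the restricted $\ell^1$-combinations.

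By Lemma \ref{para-crit} it now suffices to show that for all saturated submodules $X\sbs Y\sbs Z$ in $\Mod(\cO_K)$ (equivalently, all strict monomorphisms $X\emb Y\emb Z$) the induced map $Y/X\to Z/X$ is injective, i.e.\ that $y_1,y_2\in Y$ which become equal in $Z/X$ are already equal in $Y/X$. The plan here is first to obtain a normal form for the congruence $\sim_X$ attached to a \emph{saturated} submodule $X\sbs Z$: namely, that $z\sim_X z'$ can always be witnessed by a single application of a relation \eqref{rel WPS} on each side — concretely, either $z,z'\in X$, or there exist $t\in(0,1]$, an element $w\in Z$ and $x_1,x_2\in X$ with $z=(1-t)x_1+tw$ and $z'=(1-t)x_2+tw$. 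One inclusion is immediate from \eqref{rel WPS}, since each of $z,z'$ is $\sim_X$-related to $tw$ (resp.\ to $0$); the content is that the relation so described is transitive and compatible with the $\cO_K$-operations, hence contains $\ang{X\xx X}=\sim_X$. This is the step where Lemma \ref{lm:conv-satur} enters: to merge two consecutive instances of \eqref{rel WPS} one has to reconcile two representations of the intermediate element, and saturation of $X$ — through the segment property of Lemma \ref{lm:conv-satur} — forces the convex combinations produced along a chain back into $X$, which lets the chain be compressed. Establishing this normal form, while carefully tracking the ``$\ell^1$-budget'' consumed by each term, is the main obstacle; the difficulty is genuinely non-additive, since $\cO_K$-modules admit no unrestricted addition and can be highly degenerate (e.g.\ $\ZZ$-modules on which scalar multiplication is far from injective).

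Granting the normal form, the theorem follows quickly. Suppose $y_1,y_2\in Y$ with $y_1\sim_X y_2$ in $Z$. If $y_1,y_2\in X$ this is trivial in $Y$. Otherwise pick $t\in(0,1]$, $w\in Z$ and $x_1,x_2\in X$ with $y_i=(1-t)x_i+tw$; if $t=1$ then $y_1=w=y_2$ and we are done, so assume $t<1$. The $X$-parts $x_1,x_2$ lie in $Y$, and using that $X$ is saturated \emph{in} $Y$ (again via Lemma \ref{lm:conv-satur}) one replaces $w$ by an element $w'\in Y$ satisfying the same two equations $y_i=(1-t)x_i+tw'$; two applications of \eqref{rel WPS} inside $Y$ then give $y_1\sim_X tw'\sim_X y_2$ in $Y$. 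Hence $Y/X\to Z/X$ is injective, and $\Mod(\cO_K)$ is \qpa by Lemma \ref{para-crit}.
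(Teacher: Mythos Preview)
Your reduction to $K=\bR$ or $\bC$ and the appeal to Lemma \ref{para-crit} match the paper exactly, but from that point on your plan diverges and acquires two genuine gaps.

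First, the normal form. You want to show that for a \emph{saturated} $X\sbs Z$ the congruence $\sim_X$ is already the relation ``$z=(1-t)x_1+tw$, $z'=(1-t)x_2+tw$ for some $t,w,x_i\in X$''. But in the situation of Lemma \ref{para-crit} you only know that $X$ is saturated in $Y$ and $Y$ is saturated in $Z$; you do \emph{not} know that $X$ is saturated in $Z$. Indeed, closure of strict monomorphisms under composition is part of what is being proved, so assuming $X\sbs Z$ saturated is circular. Even setting this aside, the transitivity of your relation is the whole difficulty, and your sketch (``saturation of $X$ via Lemma \ref{lm:conv-satur} forces combinations back into $X$'') does not indicate how two representations $(1-t)x_2+tw=(1-s)x_3+sv$ of the intermediate element can be merged; in a free module this forces linear relations among the $x_i$ that need not hold in $X$.

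Second, the final step does not go through as stated. Granting $y_i=(1-t)x_i+tw$ with $w\in Z$, you claim to replace $w$ by some $w'\in Y$ satisfying the \emph{same} equations, invoking saturation of $X$ in $Y$. But Lemma \ref{lm:conv-satur} applied to $X\sbs Y$ only tells you when certain convex combinations lie in $X$; it says nothing about producing elements of $Y$. In a free $\cO_K$-module the equation $y_i=(1-t)x_i+tw'$ determines $w'$ uniquely (for $t>0$), so $w'=w$ and you would need $w\in Y$, which is exactly what is not given.

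The paper avoids both issues. It does not seek a normal form: it only rewrites the generators \eqref{rel WPS} as \emph{simple} relations $(1-t)x+tz\sim tz$ (a one-line $\ell^1$ renormalisation). Crucially, the saturation that is actually used is that of $Y$ in $Z$, never of $X$. Since $Y$ is saturated in $Z$, every element of a chain of simple relations connecting $y,y'\in Y$ already lies in $Y$, so one may assume $y=(1-t)x+tz$ and $y'=tz$ with $x\in X$, $z\in Z$, $y,y'\in Y$. Then Lemma \ref{lm:conv-satur} (again for $Y\sbs Z$) shows that the entire triangle $\conv(0,x,z)\ms\{z\}$ lies in $Y$, and one reroutes the single simple relation through tops in $Y$ (two applications suffice: first with top $(1-u)x+uz\in Y$ for $u\in(t,1)$, then with top $(t/s)z\in Y$ for suitable $s\in(t,1)$). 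No compression of chains and no hypothesis on $X\sbs Z$ is needed.
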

\begin{proof}
If the valuation is non-archimedean, then $\cO_K$ is a ring, hence $\Mod(\cO_K)$ is \qpa.
If the valuation is archimedean, Ostrowski's theorem
implies that there is an isomorphism $\si:K\to\bR$ or $\bC$ and a number $p\in(0,1]$
such that $\n a=\n{\si a}^p$ for all $a\in K$.
Then $\cO_K=\sets{a\in K}{\n{\si a}\le1}$
and we can assume that $K=\bR$ or $\bC$.

By Lemma \ref{para-crit}, we need to show that given submodules $X\sbs Y\sbs Z$ with saturated $Y\sbs Z$, the map $Y/X\to Z/X$ is injective.
Let us denote $\ang{X\xx X}_Y$ by $\sim$ and
$\ang{X\xx X}_Z$ by $\simm$.
Injectivity of $Y/X\to Z/X$ means that if $y\simm y'$ for some $y,y'\in Y$, then $y\sim y'$.

The equivalence relation $\sim$ is generated by \eqref{rel WPS}
\[\sum*_i s_ix_i+\sum*_jt_jy_j\sim \sum*_jt_jy_j\]
for $(s,t)\in \bar \cO_K(m+n)\sbs \cO_K^m\xx \cO_K^n$, $x\in X^m$ and $y\in Y^n$.
Assuming that $s,t\ne0$, let
$a=\nn s_1$, $b=\nn t_1$ and $u=\frac b{a+b}\in K\cap(0,1)$.
Then $s'=\frac1{1-u}s$ and $t'=\frac1{u}t$ satisfy
$\nn {s'}_1=a+b\le1$ and $\nn {t'}_1=a+b\le1$.
For $x'=s'(x)\in X$ and $y'=t'(y)\in Y$, we obtain
$s(x)+t(y)=(1-u)x'+u y'$.
Therefore the equivalence relation $\sim$ is generated by simple relations
\[(1-t)x+ty\sim ty\]
for $t\in K\cap[0,1)$, $x\in X$ and $y\in Y$.
We call $y$ the top element of this relation.

If $y\simm y'$ for some $y,y'\in Y$, they are connected by a chain of simple relations (with top elements in $Z$).
Since $Y$ is saturated in $Z$, all elements of the chain are contained in $Y$.
Therefore, we can assume that $y,y'$ are connected by a simple relation, hence $y=(1-t)x+tz$ and $y'=tz$ for some $t\in K\cap[0,1)$, $x\in X$ and $z\in Z$.
We can assume that $t\ne0$ as otherwise
$y=x\sim 0=y'$.

\[\tikz[scale=.7]{
\coordinate(0)at(0,0);
\coordinate(x)at(2,0);
\coordinate(z)at(0,4);
\coordinate(y)at(0,2);
\coordinate(y')at(1,2);
\draw(0)node[left]{0}--(x)node[right]{$x$}--
(z)node[left]{$z$}--(0);
\draw(y)node[left]{$y'$}--(y')node[right]{$y$};
}\]

We have $x\in X\sbs Y$ and $y=(1-t)x+tz\in Y$,
hence $(1-s)x+sz\in Y$ for all $s\in K\cap[0,1)$ by Lemma \ref{lm:conv-satur}.
Similarly, $sz\in Y$ for all $s\in K\cap[0,1)$.
Taking the convex hull of these lines, we conclude that all points of the above triangle except the point $z$ are contained in $Y$.
Now we can connect $y$ and $y'$ by a chain of simple relations with top elements in $Y$.
Therefore $y\sim y'$.
\end{proof}

\begin{remark}
The same argument applies to any semifield $K\sbs\bC$ such that $\n t\in K$ for $t\in K$ and $1-t\in K$ for $t\in K\cap(0,1)$.
Such semifield is closed under conjugation since $\bar t=\n t^2/t\in K$ for $0\ne t\in K$.
In particular, the argument applies to $K=\bR_+$ with $\cO_K=D_*=[0,1]$, $K=\bQ_+$ with $\cO_K=\bQ\cap [0,1]$,
and $K=\bQ$ with $\cO_K=\bQ\cap\ZZ$ (denoted by $\bZ_{(\infty)}$ in \dur).
\end{remark}

\begin{theorem}
The category $\Mod(D_*)$ of pointed convex spaces is \qpa.
\end{theorem}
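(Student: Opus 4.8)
The plan is to verify the criterion of Lemma~\ref{para-crit} for the valuation monad of the semifield $K=\bR_+$ (with the usual absolute value), for which $\cO_K=[0,1]=D_*$; note that $|t|=t\in\bR_+$ for all $t$ and $1-t\in\bR_+$ for $t\in\bR_+\cap(0,1)$, so this is exactly the situation recorded in the Remark after the previous theorem, and the proof is a transcription of that argument, which I now outline. By Lemma~\ref{para-crit} it suffices to show that for submodules $X\sbs Y\sbs Z$ in $\Mod(D_*)$ with $Y$ saturated in $Z$, the induced map $Y/X\to Z/X$ is injective. Writing $\sim$ for $\ang{X\xx X}$ computed in $Y$ and $\simm$ for $\ang{X\xx X}$ computed in $Z$, this means $y\simm y'$ with $y,y'\in Y$ forces $y\sim y'$.

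First I would reduce the generators \eqref{rel WPS} of $\sim$ to \emph{simple relations}
\[(1-t)x+ty\sim ty,\qquad t\in[0,1),\ x\in X,\ y\in Y,\]
and likewise for $\simm$ with $y\in Z$ (calling $y$ the top element). Indeed, a generating relation coming from $(s,t)\in D_*(m+n)$ with $s,t\ne 0$ can be rescaled: put $a=\nn s_1$, $b=\nn t_1$, $u=\tfrac b{a+b}\in(0,1)$; then $\tfrac1{1-u}s$ and $\tfrac1u t$ again have $\ell^1$-norm $\le 1$, so $x'=(\tfrac1{1-u}s)(x)\in X$ and $y'=(\tfrac1u t)(y)\in Y$, and the relation becomes $(1-u)x'+uy'\sim uy'$. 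The cases $s=0$ or $t=0$ are immediate.

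Now suppose $y\simm y'$ with $y,y'\in Y$. Expressing this by a chain of simple relations with top elements in $Z$ and using that $Y$ is saturated in $Z$, every member of the chain lies in $Y$, so I may assume $y$ and $y'$ are joined by a single simple relation: $y=(1-t)x+tz$, $y'=tz$ with $t\in[0,1)$, $x\in X$, $z\in Z$, and we may take $t\ne 0$ (otherwise $y=x\sim 0_X=y'$). Since $x\in X\sbs Y$ and $y=(1-t)x+tz\in Y$, Lemma~\ref{lm:conv-satur} gives $(1-s)x+sz\in Y$ for all $s\in[0,1)$; applying it with $x$ replaced by $0_Y$ (the witness being $y'=tz\in Y$) gives $sz\in Y$ for all $s\in[0,1)$. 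As $Y$ is closed under convex combinations, taking convex combinations of these two half-open segments shows that every point of the triangle with vertices $0_Y,x,z$ other than the apex $z$ lies in $Y$. Hence $y$ and $y'$ can be connected by a chain of simple relations whose top elements all lie in $Y$, giving $y\sim y'$.

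Since the argument merely repeats that of the preceding theorem, I do not anticipate a genuine obstacle; the one point to keep in view is that $\bR_+$ is only a semifield, so one cannot cite the previous theorem verbatim, but the rescaling step and Lemma~\ref{lm:conv-satur} use only division by positive reals and closure of $[0,1]$ under $t\mapsto 1-t$, both of which hold here. (Equivalently, one may simply invoke the Remark following the previous theorem.)
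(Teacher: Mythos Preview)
Your proposal is correct and follows exactly the paper's approach: the paper gives no separate proof of this theorem, relying instead on the preceding Remark, which states that the argument for complete valued fields applies verbatim to the semifield $K=\bR_+$ with $\cO_K=D_*=[0,1]$. Your proposal simply writes out that transcription, and the details (the rescaling to simple relations, the two applications of Lemma~\ref{lm:conv-satur}, and the triangle argument) are all carried out correctly.
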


\providecommand{\bysame}{\leavevmode\hbox to3em{\hrulefill}\thinspace}
\providecommand{\href}[2]{#2}

\end{document}